\newtheorem{theorem}{Theorem}[section]
\newtheorem{lemma}[theorem]{Lemma}
\newtheorem{assumption}[theorem]{Assumption}
\newtheorem{proposition}[theorem]{Proposition}
\theoremstyle{definition}
\newtheorem{definition}[theorem]{Definition}
\theoremstyle{remark}
\newtheorem{remark}[theorem]{Remark}
\numberwithin{equation}{section}
\begin{document}

\setcounter{page}{1}

\title[Multilinear operators]{Multilinear operators
on Hardy spaces associated with ball quasi-Banach function spaces}

\author[J. Tan]{Jian Tan}

\address{School of Science, Nanjing University of Posts and Telecommunications, Nanjing 210023, China.}
\email{\textcolor[rgb]{0.00,0.00,0.84}{tj@njupt.edu.cn; tanjian89@126.com}}


\subjclass[2020]{Primary 42B30; Secondary 42B20, 42B25.}

\keywords{Multilinear Calder\'on--Zygmund operators, multilinear pseudo-differential operators, finite atomic decomposition, Hardy space, ball Banach function spaces.}

\date{Received: xxxxxx; Revised: yyyyyy; Accepted: zzzzzz.}

\begin{abstract}
This paper establishes that multilinear Calder\'on--Zygmund operators and their maximal operators are bounded on Hardy spaces associated with ball quasi-Banach function spaces. 
Moreover, we also obtain the boundedness of multilinear pseudo-differential operators 
on local Hardy spaces associated with ball quasi-Banach function spaces.
Since these (local) Hardy type spaces encompass a wide range of classical (local) Hardy-type spaces
including weighted (local) Hardy spaces, variable (local) Hardy space, (local) Hardy--Morrey space, mixed-norm (local) Hardy space, (local) Hardy--Lorentz space and (local) Hardy--Orlicz spaces, the results presented in this paper are highly general and essentially improve the existing results.
\end{abstract} \maketitle

\section{Introduction}

Multilinear Calder\'on--Zygmund theory is a natural extension of its linear counterpart. The foundational work on the class of multilinear Calder\'on--Zygmund operators was initiated by Coifman and Meyer in \cite{CM} and subsequently studied systematically by Grafakos and Torres in \cite{GT}. They demonstrated that  for any $1<p_1,\dots,p_m<\infty$ these operators map products of Lebesgue spaces $L^{p_1} \times \cdots \times L^{p_m}$ into the corresponding Lebesgue space $L^p$, with the relationship $\frac{1}{p} = \frac{1}{p_1} + \cdots + \frac{1}{p_m}$.  

On the other hand, Hardy spaces have played a pivotal role in the modern harmonic analysis since the groundbreaking contributions to Hardy space theory by Stein and Weiss \cite{SW} as well as Fefferman and Stein\cite{FS}. The study of multilinear operators within the framework of Hardy space theory has garnered increasing attention from various researchers; see, for instance, \cite{GK,HL}.  
Recently, significant progress has been made in establishing the boundedness of multilinear Calder\'on--Zygmund operators from products of weighted or variable Hardy spaces to weighted or variable Hardy spaces; see \cite{CMN,Tan}. It is worth noting that Lebesgue spaces, weighted Lebesgue spaces, and variable Lebesgue spaces are examples of ball quasi-Banach function spaces, which frequently serve as ingredients of Hardy-type spaces. For more details, see for example \cite{CJY,Ho,SHYY,SYY,Tan24,YJY} and the references therein.   
Thus, a natural question arises:\par  
{\it Can we establish that multilinear Calder\'on--Zygmund operators are bounded on Hardy spaces built on ball quasi-Banach function spaces?}  

The main purpose of this paper is to address this question, focusing on
the boundedness of multilinear Calder\'on--Zygmund operators and their maximal operators on the generalized Hardy type spaces.
Meanwhile, we also consider the boundedness of multilinear pseudo-differential operators on the generalized local Hardy type spaces.

Roughly speaking, 
to prove our results for the generalized Hardy type spaces,
we only need to assume the following three conditions on $X$:
First of all, given a ball quasi-Banach function space $X$, there exists $r>1$ such that $X^r$ is a ball Banach function space and the Hardy--Littlewood maximal operator is bounded on the associate space $(X^{r})'$,
where the \emph{r\text{-}convexification} $X^{r}$ of $X$ is defined by setting
        \[
        X^{r}:=\{f\in\mathscr{M}(\mathbb R^{n})\colon\vert f\vert^{r}\in X\}
        \]
        equipped with the \emph{quasi-norm} $\|f\|_{X^{r}}:=\|\vert f\vert^{r}\|_{X}^{1/r}$ for any $f\in X^{r}$.
        Secondly, for some $\theta,\ s\in(0,1]$ and $\theta<s$, there exists a positive constant $C$ such that, for any $\{f_{j}\}_{j=1}^{\infty}\subset L^{1}_{loc}(\mathbb R^{n})$,
\begin{align*}
\left\|  \left\{  \sum_{j=1}^{\infty}\left[ \mathcal{M}^{(\theta)}(f_{j}) \right]^{s} \right\}^{\frac{1}{s}}  \right\|_{X}\leq C\left\|  \left\{  \sum_{j=1}^{\infty}\vert f_{j} \vert^{s} \right\}^{\frac{1}{s}} \right\|_{X},
\end{align*}
where the \emph{powered Hardy--Littlewood maximal operator} $\mathcal{M}^{(\theta)}$ is defined by setting
\[
\mathcal{M}^{(\theta)}(f)(x):=\{\mathcal{M}(\vert f \vert^{\theta})(x)\}^{\frac{1}{\theta}}
\] for any $f\in L^{1}_{loc}(\mathbb R^{n})$ and $x\in\mathbb R^{n}$.
Last but not least, the following H\"older's inequality holds true:
\begin{align*}
\left\|\prod_{k=1}^mf_k\right\|_{X^p}\lesssim \prod_{k=1}^m\|f_k\|_{X_k^{p_k}}
\end{align*}
for any $f_k\in X_k^{p_k}$, $k=1,2,\dots,m$.

The Hardy spaces built on ball quasi-Banach function spaces encompass a diverse array of Hardy-type spaces, such as weighted Hardy spaces, variable Hardy spaces, Hardy--Morrey spaces, mixed-norm Hardy spaces, Hardy--Lorentz spaces, and Hardy--Orlicz spaces. Consequently, the results presented in this paper have a wide range of applications and even when $X$ is the Morrey space, the mixed-norm Lebesgue space, the Lorentz space and the Orlicz spaces, all these results are completely new. 

Before we state our main results, we recall the definition of 
the multilinear Calder\'on--Zygmund operators in \cite{GT}.
We assume that $K(y_0, y_1, \ldots, y_m)$ is a function away from
the diagonal $y_0=y_1=\cdots=y_m$ in $(\mathbb R^n)^{m}$ which
satisfies the following estimates
\begin{equation}\label{s1e1}
|\partial_{y_0}^{\alpha_0}\partial_{y_1}^{\alpha_1}\cdots
\partial_{y_m}^{\alpha_m}K(y_0, y_1, \ldots, y_m)|
\le\frac{C}{(\sum_{k,l=0}^m|y_k-y_l|)^{nm+|\alpha|}}
\
\end{equation}
for all $|\alpha|\le N$, where $\alpha=(\alpha_0, \alpha_1, \ldots, \alpha_m)$
is an ordered set of m-tuples of nonnegative integers, $|\alpha|=
|\alpha_0|+|\alpha_1|+\cdots+|\alpha_m|$, where $\alpha_j$ is the
order of each multi-index $a_k$, and $N>\widetilde N$ and $\widetilde N$ is a large integer to be determined
later. We call such functions $K$ multilinear standard kernels, $k=1, 2, \ldots, m$.
We assume that $T$ is a weakly continuous multilinear operator defined on products
of test functions such that for some multilinear standard kernel $K$,
the integral representation below is valid
\begin{equation*}\label{s1e2}
T(f_1,\ldots, f_m)(x)=\int_{(\mathbb R^n)^m}K(x, y_1, \ldots, y_m)
f_1(y_1)\cdots f_m(y_m)dy_1\cdots dy_m,
\end{equation*}
whenever every $f_k$ is the smooth function with compact support and
$x\in(\cap_{k=1}^m\mbox{supp}f_k)^c$.
We call $T$ an multilinear Calder\'on--Zygmund operator if it is
associated to a multilinear standard kernel as above and has a
bounded extension from a product of $L^{q_1}(\mathbb R^n), \ldots, L^{q_m}(\mathbb R^n)$ spaces
into another $L^q(\mathbb R^n)$ space with $\frac{1}{q}=\frac{1}{q_1}+\cdots+\frac{1}{q_m}$ for some $1<q_1,\ldots,q_m<\infty$.

Our first theorem gives the boundedness of multilinear Calder\'on--Zygmund operators from products of the generalized Hardy spaces into corresponding ball quasi-Banach function spaces. For brevity we defer some technical definitions to Section 2.

\begin{theorem}\label{s1th1}
Given an integer $m\ge 1$, let $0<p, p_1, \ldots, p_m<\infty$,
let $X^{p_1}_1,$ $\ldots,$ $X^{p_m}_m$ be ball quasi-Banach function spaces satisfying Assumption \ref{ass2.7} with $0<\theta<s<1$ and Assumption \ref{ass2.8} with the same $s$ in (\ref{2.7}),
and let $X^p$ be the ball quasi-Banach function spaces satisfying Assumption \ref{ass2.7} with $0<\theta_p<s_p<1$ and Assumption \ref{ass2.8} with the same $s_p$ in (\ref{2.7}). 
Let 
$T$ be a multilinear Calder\'on--Zygmund operator associated to the kernel $K$ that satisfies (\ref{s1e1}).
Suppose that the following H\"older's inequality holds true:
\begin{align}\label{holder}
\left\|\prod_{k=1}^mf_k\right\|_{X^p}\lesssim \prod_{k=1}^m\|f_k\|_{X_k^{p_k}}
\end{align}
for any $f_k\in X_k^{p_k}$.
If $\theta\in [\frac{mn}{n+N+1},1)$, then 
$$T: H_{X_1^{p_1}}\times \cdots \times H_{X_m^{p_m}}\rightarrow X^p.$$
\end{theorem}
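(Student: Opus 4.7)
My plan is to argue by finite atomic decomposition together with the vector-valued maximal inequality (Assumption~\ref{ass2.7}) and the product H\"older inequality (\ref{holder}). First, for each $f_k\in H_{X_k^{p_k}}$ I would take a finite $(X_k^{p_k},q,d)$-atomic decomposition $f_k=\sum_{i_k}\lambda_{i_k,k}a_{i_k,k}$, where $a_{i_k,k}$ is supported in a ball $B_{i_k,k}$, satisfies the usual size estimate, and has vanishing moments up to some order $d$ chosen large relative to $N$. By multilinearity of $T$,
\[
T(f_1,\dots,f_m)=\sum_{i_1,\dots,i_m}\lambda_{i_1,1}\cdots\lambda_{i_m,m}\,T(a_{i_1,1},\dots,a_{i_m,m}),
\]
and by a standard finite-decomposition/density reduction it suffices to estimate the $X^p$-norm of such a finite sum.

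For a fixed $m$-tuple of atoms I would single out the largest supporting ball, $B^{*}=B_{i_{k^{*}},k^{*}}$ with radius $r^{*}=\max_{k}r_{B_{i_k,k}}$, and decompose $T(a_{i_1,1},\dots,a_{i_m,m})$ into its restriction to $2\sqrt{n}\,B^{*}$ (the local part) and to its complement (the non-local part). For the local part I would invoke the multilinear $L^{q_1}\times\cdots\times L^{q_m}\to L^{q}$ boundedness of $T$ together with the atomic size estimates and H\"older's inequality; this yields a pointwise bound by a product of characteristic-function-type terms that can be dominated by $\prod_k \mathcal{M}^{(\theta)}(\lambda_{i_k,k}\chi_{B_{i_k,k}}/\|\chi_{B_{i_k,k}}\|_{X_k^{p_k}})$. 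For the non-local part I would use the vanishing moments of the atom sitting on $B^{*}$, expand $K$ in a Taylor polynomial of order $N$ in the corresponding variable, and apply the smoothness bound~(\ref{s1e1}) to obtain a decay factor of order $(r^{*}/|x-y_{k^{*}}|)^{n+N+1}$. Integrating against the remaining $m-1$ atoms and passing to powered maximal functions at each slot, the resulting sum over far-away balls is summable exactly when $\theta\ge mn/(n+N+1)$, which is precisely the hypothesis.

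Once the pointwise estimate
\[
|T(a_{i_1,1},\dots,a_{i_m,m})(x)|\lesssim \prod_{k=1}^{m}\mathcal{M}^{(\theta)}\!\left(\frac{\lambda_{i_k,k}\,\chi_{B_{i_k,k}}}{\|\chi_{B_{i_k,k}}\|_{X_k^{p_k}}}\right)\!(x),
\]
or rather its $\ell^{s}$-summed analogue in the indices $i_k$, is in hand, the conclusion follows by three applications of our standing hypotheses. The product H\"older inequality~(\ref{holder}) reduces $\|\cdot\|_{X^{p}}$ to a product of $\|\cdot\|_{X_k^{p_k}}$ norms. In each factor Assumption~\ref{ass2.7} strips away $\mathcal{M}^{(\theta)}$ and leaves the $\ell^{s}$-sum of the normalized characteristic functions. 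Finally, Assumption~\ref{ass2.8} bounds this expression by the atomic Hardy-space quasi-norm, and taking the infimum over all admissible decompositions yields $\|T(f_1,\dots,f_m)\|_{X^p}\lesssim \prod_k \|f_k\|_{H_{X_k^{p_k}}}$.

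The main obstacle I anticipate is the non-local estimate when the atom balls have widely different scales: one has to decide in which slot to exploit cancellation and then control the other $m-1$ integrals by maximal functions at potentially much smaller scales without losing the decay $(r^{*}/|x-y_{k^{*}}|)^{n+N+1}$. The choice $\theta\in[mn/(n+N+1),1)$ is the sharp balance that keeps the resulting geometric series summable after one dominates each atomic integral by a powered maximal function; once this is secured, the vector-valued inequality and the product H\"older estimate close the argument in a fairly routine fashion.
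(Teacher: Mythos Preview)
Your overall strategy---atomic decomposition, split $T(\vec a)$ into a local and a non-local piece, bound each by products of powered maximal functions, then apply (\ref{holder}) and Assumption~\ref{ass2.7}---matches the paper's, but two steps are set up incorrectly and would not close as written.

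First, the local/non-local split should be made at the \emph{smallest} of the dilated supporting cubes $Q^{\ast}_{k,j_k}$, not the largest. With your choice the local region $2\sqrt{n}\,B^{*}$ can be much larger than the other supports $B_{i_k,k}$, and on that region the claim that the $L^{q_1}\times\cdots\times L^{q_m}\to L^{q}$ boundedness ``yields a pointwise bound'' is simply false: it gives only a global $L^{q}$ bound, and there is no way to dominate $\chi_{2\sqrt{n}B^{*}}$ by $\prod_k\mathcal{M}(\chi_{B_{i_k,k}})^{\alpha}$ when some $B_{i_k,k}$ is tiny and far from $x$. In the paper (following \cite{CMN}) one takes $R_{j_1,\dots,j_m}$ to be the smallest $Q^{\ast}_{k,j_k}$; on this small cube an $L^{q}$-\emph{average} estimate
\[
\Bigl(\tfrac{1}{|R|}\int_{R}|T(\vec a)|^{q}\Bigr)^{1/q}\lesssim \prod_{k}\inf_{z\in R}\frac{\mathcal{M}(\chi_{Q_{k,j_k}})(z)^{(n+N+1)/(mn)}}{\|\chi_{Q_{k,j_k}}\|_{X_k^{p_k}}}
\]
holds, and it is converted to an $X^{p}$ bound via Lemma~\ref{l2.7} rather than by a pointwise inequality. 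The pointwise maximal-function bound is available only for the non-local piece $|T(\vec a)|\chi_{R^{c}}$, where one exploits the vanishing moments of the atom on the \emph{smallest} cube (the kernel representation is legitimate as soon as $x$ leaves one of the supports).

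Second, your ``standard finite-decomposition/density reduction'' hides a genuine obstacle: Theorem~\ref{finite} (equivalence of the finite atomic and Hardy quasi-norms) requires $X$ to have an absolutely continuous quasi-norm, which Theorem~\ref{s1th1} does \emph{not} assume (Morrey spaces are a key example). The paper therefore first proves the result under that extra hypothesis (Theorem~\ref{s3th1}) and then removes it by a separate bootstrap: Lemma~\ref{embed} embeds each $X_k^{p_k}$ continuously into a weighted space $L^{s_k}_{\omega}$ with $\omega\in A_{1}$, so that the infinite atomic series for $f_k$ converges in $H^{s_k}_{\omega}$ and the already-known weighted boundedness of $T$ justifies interchanging $T$ with the infinite sum before repeating the estimates of Theorem~\ref{s3th1}. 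Without this step your argument yields at best Theorem~\ref{s3th1}, not Theorem~\ref{s1th1}.
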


Our second result gives boundedness of multilinear Calder\'on--Zygmund operators into corresponding Hardy spaces.

\begin{theorem}\label{s1th2}
Given an integer $m\ge 1$, let $0<p, p_1, \ldots, p_m<\infty$,
let $X^p,$ $X^{p_1}_1,$ $\ldots,$ $X^{p_m}_m$ be ball quasi-Banach function spaces satisfying Assumption \ref{ass2.7} with $0<\theta<s<1$ and Assumption \ref{ass2.8} with the same $s$ in (\ref{2.7}) and let $X^p$ be the ball quasi-Banach function spaces satisfying Assumption \ref{ass2.7} with $0<\theta_p<s_p<1$ and Assumption \ref{ass2.8} with the same $s_p$ in (\ref{2.7}). 
Let 
$T$ be a multilinear Calder\'on--Zygmund operator associated to the kernel $K$ that satisfies (\ref{s1e1}).
Suppose that the H\"older's inequality (\ref{holder}) holds true. Assume further that
\begin{align}\label{s1c3}
\int_{(\mathbb R^n)^m}x^\alpha T(a_1,a_2,\cdots,a_m)(x)dx=0,
\end{align}
for all $|\alpha|\le \widetilde N$ and $(X,q,N)$-atoms $a_i$,
$i=1,2,\cdots,m$.
If $\theta\in [\frac{mn}{n+N+1}\vee \frac{mn}{N-\tilde N},1)$
and $\theta_p\in [\frac{n}{n+\widetilde N+1},1)$,
then 
$$T: H_{X_1^{p_1}}\times \cdots \times H_{X_m^{p_m}}\rightarrow H_{X^p}.$$
\end{theorem}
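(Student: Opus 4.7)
The plan is to upgrade the strategy of Theorem~\ref{s1th1} from an $X^p$-norm estimate to an $H_{X^p}$-norm estimate by showing that $T$ sends products of atoms to molecules in $H_{X^p}$. First, using the finite atomic decomposition of $H_{X_k^{p_k}}$ guaranteed by Assumption~\ref{ass2.8}, I would write each $f_k=\sum_{j_k}\lambda_{k,j_k}a_{k,j_k}$ with $a_{k,j_k}$ an $(X_k^{p_k},q,N)$-atom supported in a ball $B_{k,j_k}$. By multilinearity,
\begin{align*}
T(f_1,\ldots,f_m)=\sum_{j_1,\ldots,j_m}\Bigl(\prod_{k=1}^m\lambda_{k,j_k}\Bigr)\,T(a_{1,j_1},\ldots,a_{m,j_m}).
\end{align*}

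The key technical step is to show that, after a suitable normalization, each product $T(a_{1,j_1},\ldots,a_{m,j_m})$ is a $(X^p,q,\widetilde N)$-molecule centered on a ball $B^\ast\supseteq\bigcup_k B_{k,j_k}$. Three properties must be verified: (a) the local size $\|T(a_1,\ldots,a_m)\|_{L^q(B^\ast)}$, which follows from the $L^{q_1}\times\cdots\times L^{q_m}\to L^q$ boundedness of $T$ combined with the atomic size estimates; (b) the pointwise tail decay of the form $|T(a_1,\ldots,a_m)(x)|\lesssim |B^\ast|^{-\beta}(|x-c_{B^\ast}|/r_{B^\ast})^{-(n+\widetilde N+1)}$ for $x\notin 2B^\ast$, obtained by Taylor expanding the kernel $K(x,y_1,\ldots,y_m)$ in the $y$-variables around the center of $B^\ast$ to order $\widetilde N$ and using the vanishing moments of each $a_{k,j_k}$; and (c) the vanishing moments of $T(a_1,\ldots,a_m)$ up to order $\widetilde N$, which is precisely hypothesis~\eqref{s1c3}.

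Once $T(a_1,\ldots,a_m)$ is identified as a molecule, I would invoke the molecular characterization of $H_{X^p}$, which holds under Assumption~\ref{ass2.7} with $(\theta_p,s_p)$ and Assumption~\ref{ass2.8} for the target space; the constraint $\theta_p\in[\frac{n}{n+\widetilde N+1},1)$ is exactly the condition under which $\widetilde N$ vanishing moments suffice to guarantee membership in $H_{X^p}$. I would then bound the grand maximal function of $T(f_1,\ldots,f_m)$ pointwise by a product of powered maximal functions $\mathcal{M}^{(\theta)}$ applied to the input atoms, and sum via the Fefferman--Stein type vector-valued inequality from Assumption~\ref{ass2.7} together with the H\"older inequality~\eqref{holder}. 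This yields $\|T(f_1,\ldots,f_m)\|_{H_{X^p}}\lesssim\prod_{k=1}^m\|f_k\|_{H_{X_k^{p_k}}}$ on the dense subspace of finite atomic combinations, from which the full boundedness follows by a standard density argument.

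The principal obstacle is step~(b): establishing the polynomial tail decay uniformly in the atoms. The Taylor remainder of $K$ in several variables must be estimated using kernel derivatives of order at most $N$ and balanced against the moment conditions of the atoms; this is precisely where the lower bound $\theta\ge\frac{mn}{N-\widetilde N}$ enters, ensuring that the resulting tail decay rate is strong enough for the subsequent vector-valued maximal estimate on $X^p$ to close. Once this decay is in hand, the remaining steps mirror the proof of Theorem~\ref{s1th1}, with atomic size bounds replaced throughout by the stronger molecular bounds for $T(a_1,\ldots,a_m)$.
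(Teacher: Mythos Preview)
Your approach via the molecular characterization is a natural instinct, but it has a structural gap that the paper avoids by taking a different route. The paper does \emph{not} pass through molecules at all: it uses the non-tangential maximal function characterization $\|g\|_{H_{X^p}}\sim\|M_\phi g\|_{X^p}$ and estimates $M_\phi T(a_{1,j_1},\ldots,a_{m,j_m})$ pointwise, borrowing from \cite[Lemma~5.1]{CMN} the bound (for $x\notin Q_1^{**}$, $Q_1$ the smallest cube)
\[
M_\phi T(a_1,\ldots,a_m)(x)\lesssim \prod_{k=1}^m\frac{\mathcal M(\chi_{Q_k})(x)^{\frac{n+N+1}{mn}}}{\|\chi_{Q_k}\|_{X_k^{p_k}}}+\mathcal M(\chi_{Q_1})(x)^{\frac{n+\widetilde N+1}{n}}\prod_{k=1}^m\inf_{z\in Q_1}\frac{\mathcal M(\chi_{Q_k})(z)^{\frac{N-\widetilde N}{mn}}}{\|\chi_{Q_k}\|_{X_k^{p_k}}}.
\]
This estimate already factors as a product over $k$, so H\"older's inequality~(\ref{holder}) applies directly and the Fefferman--Stein inequality in each $X_k^{p_k}$ closes the argument.

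Your molecular scheme breaks down at two places. First, the reference ball: you take $B^\ast\supseteq\bigcup_k B_{k,j_k}$, but when the atom supports have wildly different scales this ball is far too large, and the resulting molecular normalization $\|\chi_{B^\ast}\|_{X^p}^{-1}$ bears no useful relation to $\prod_k\|\chi_{B_{k,j_k}}\|_{X_k^{p_k}}^{-1}$. The correct reference is the \emph{smallest} cube, as in the display above. Second, even with the right center, a single-ball molecular estimate of the form $|T(a_1,\ldots,a_m)(x)|\lesssim |B^\ast|^{-\beta}(|x-c_{B^\ast}|/r_{B^\ast})^{-(n+\widetilde N+1)}$ does not factor across $k$, so after invoking the molecular reconstruction theorem you would be left with an $X^p$-norm of a sum indexed by $(j_1,\ldots,j_m)$ that cannot be split via~(\ref{holder}). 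A smaller but related issue: the exponent $n+\widetilde N+1$ in your step~(b) is misattributed. Taylor-expanding $K$ in the $y$-variables and using the atoms' moments produces decay governed by $N$; the $\widetilde N$ enters only when you convolve $T(a_1,\ldots,a_m)$ with $\phi_t$ and exploit hypothesis~(\ref{s1c3}) to subtract a Taylor polynomial of $\phi_t$ of order $\widetilde N$. That is precisely why the paper works with $M_\phi T$ rather than with $T$ itself.
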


In this paper we also consider the maximal multilinear Calder\'on--Zygmund operator
defined by
$$
T_\ast(\vec{f})(x)=\sup_{\delta>0}|T_\delta(f_1,\ldots,f_m)(x)|,
$$
where $T_\delta$ are smooth truncations of $T$ given by
$$
T_\delta(f_1,\ldots,f_m)(x)=\int_{(\mathbb R^n)^m}K_\delta
(x,y_1,\ldots,y_m)f_1(y_1)\cdots f_m(y_m)dy_1\cdots dy_m,
$$
where $$K_\delta(x,y_1,\ldots,y_m)=
\phi(\sqrt{|x-y_1|^2+\cdots+|x-y_m|^2}/2\delta)K(x,y_1,\ldots,y_m)$$
and $\phi(x)$ is a smooth function on $\mathbb R^n$,
which vanishes if $|x|\le 1/4$ and is equal to $1$ if $|x|>1/2$.
It was proved in \cite{GT1} that the sublinear operator $T_\ast$ satisfies similar boundedness
estimates as $T$.
Repeating the similar argument to that of Theorem \ref{s1th1}, we immediately obtain the following result regarding $T_\ast$.

\begin{theorem}\label{s1th3}
Let all the notation be as in Theorem \ref{s1th1}
and let $T$ be a multilinear Calder\'on--Zygmund operator associated to the kernel $K$ that satisfies (\ref{s1e1}).
Suppose that the H\"older's inequality (\ref{holder}) holds true.
If $\theta\in [\frac{mn}{n+N+1},1)$, then 
$$T_\ast: H_{X_1^{p_1}}\times \cdots \times H_{X_m^{p_m}}\rightarrow X^p.$$
\end{theorem}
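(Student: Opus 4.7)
The plan is to follow the proof of Theorem~\ref{s1th1} essentially verbatim, replacing $T$ by $T_*$ throughout. The strategy rests on the observation that the proof of Theorem~\ref{s1th1} uses only three properties of the operator: (i) multilinear $L^{q_1}\times\cdots\times L^{q_m}\to L^q$ boundedness; (ii) the off-diagonal kernel estimate \eqref{s1e1}; and (iii) sublinearity (in fact, linearity) in each argument, so that an atomic decomposition of the inputs can be pushed through. Property (i) for $T_*$ is furnished by \cite{GT1}; property (ii) is inherited by the truncated kernel $K_\delta$ with constants independent of $\delta$ because $\phi$ is a fixed bounded smooth function and the cutoff $\phi(\sqrt{\sum_k|x-y_k|^2}/2\delta)$ only removes contributions where every $|x-y_k|$ is comparable to $\delta$; and property (iii) is immediate from the supremum defining $T_*$. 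Granting these, the rest of the argument is a direct transcription.

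First I fix, for each $k$, a finite atomic decomposition $f_k=\sum_{j_k}\lambda_{j_k}^k a_{j_k}^k$ in $H_{X_k^{p_k}}$, where $a_{j_k}^k$ is an $(X_k^{p_k},q,N)$-atom supported in a ball $B_{j_k}^k$. By the sublinearity of $T_*$,
\begin{align*}
T_*(f_1,\ldots,f_m)(x)\le \sum_{j_1,\ldots,j_m}\Bigl(\prod_{k=1}^m|\lambda_{j_k}^k|\Bigr)\,T_*(a_{j_1}^1,\ldots,a_{j_m}^m)(x),
\end{align*}
and I split $\mathbb R^n$ into $2^m$ regions according to whether $x\in 2\sqrt m\,B_{j_k}^k$ or not for each $k$. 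On the fully local region the $L^{q_1}\times\cdots\times L^{q_m}\to L^q$ boundedness of $T_*$ from \cite{GT1}, combined with the size estimate of the atoms, produces the expected local bound. On any region in which at least one argument is far from its support, I use the kernel estimate \eqref{s1e1} for $K_\delta$, uniformly in $\delta$, together with the vanishing moments of the atoms to obtain a pointwise control of the form
\begin{align*}
T_*(a_{j_1}^1,\ldots,a_{j_m}^m)(x)\lesssim \prod_{k=1}^m \mathcal M^{(\theta)}(a_{j_k}^k)(x),
\end{align*}
up to the usual normalization $\|\mathbf 1_{B_{j_k}^k}\|_{X_k^{p_k}}^{-1}$. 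The hypothesis $\theta\in[mn/(n+N+1),1)$ is exactly what forces the corresponding geometric series of tail contributions to converge.

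To pass to $X^p$, I apply the H\"older inequality \eqref{holder} to factorize the norm over $k$, and on each factor I invoke the vector-valued Fefferman--Stein inequality for $\mathcal M^{(\theta)}$ from Assumption~\ref{ass2.7} with the exponent $s$, followed by the standard atomic comparison
\begin{align*}
\Bigl\|\Bigl\{\sum_{j_k}\bigl(|\lambda_{j_k}^k|\mathbf 1_{B_{j_k}^k}/\|\mathbf 1_{B_{j_k}^k}\|_{X_k^{p_k}}\bigr)^s\Bigr\}^{1/s}\Bigr\|_{X_k^{p_k}}\lesssim \|f_k\|_{H_{X_k^{p_k}}}.
\end{align*}
A density argument from finite atomic decompositions to general Hardy space elements then completes the proof.

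The main obstacle is the one point at which the argument genuinely departs from the linear case, namely securing the pointwise estimate of Step~2 uniformly in $\delta>0$. Concretely, one must verify that introducing the smooth cutoff $\phi$ in $K_\delta$ neither destroys the off-diagonal decay needed to sum the tails nor obstructs the vanishing-moment cancellation against the atoms. Once this uniform control is in place (which is essentially the content of \cite{GT1}), the rest of the proof is a mechanical repetition of the scheme used for Theorem~\ref{s1th1}, and I obtain $\|T_*(\vec f)\|_{X^p}\lesssim \prod_{k=1}^m\|f_k\|_{H_{X_k^{p_k}}}$ as claimed.
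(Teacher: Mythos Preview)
Your proposal is correct and follows essentially the same approach as the paper: repeat the proof of Theorem~\ref{s1th1} with $T$ replaced by $T_*$, using that $T_*$ inherits the requisite $L^{q_1}\times\cdots\times L^{q_m}\to L^q$ boundedness and the uniform-in-$\delta$ kernel estimates. The only cosmetic differences are that the paper organizes the local/nonlocal split via the single smallest dilated cube $R_{j_1,\dots,j_m}$ rather than a $2^m$-region decomposition, and it cites \cite{Tan1,WWX} directly for the atom-level estimates on $T_*$ instead of deriving them from \cite{GT1}.
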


Lastly, we will establish the boundedness of multilinear pseudo-differential operators on local Hardy spaces associated with ball quasi-Banach function spaces.
For $f_k\in\mathcal S, k=1,\cdots,m,$ recall that the multilinear pseudo-differential operators in \cite{GT,TZ} of the form
$$
T_\sigma(f_1,\ldots, f_m)(x)=\int_{(\mathbb R^n)^m}\sigma(x,\xi_1,\ldots,\xi_m)
\prod_{k=1}^m\hat f_k(\xi_k)e^{2\pi ix\cdot(\sum_{j=1}^m\xi_k)}
d\xi_1\cdots d\xi_m,$$
where the symbol $\sigma(x,\xi_1,\ldots,\xi_m)\in MS_{\rho,\delta}^m$,
for $m\in \mathbb R$ and $\rho,\delta\in[0,1]$, that is,
$\sigma(x,\xi_1,\ldots,\xi_m)$ is smooth
and
$$
|\partial_x^\alpha\partial_{\xi_1}^{\beta_1}
\cdots\partial_{\xi_m}^{\beta_m}
\sigma(x,\xi_1,\ldots,\xi_m)|
\le
C(1+\sum_{k=1}^m|\xi_k|)^{m-\rho(\sum_{k=1}^m
|\beta_k|)+\sigma|\alpha|},
$$
for all multi-indices $\alpha$, $\beta_k$, $k=1,\cdots,m$.
We denote $\sigma(x,\xi_1,\ldots,\xi_m)\in MS_{\rho,\delta}^m$
for the symbol of multilinear pseudo-differential operators.
Now we state our last result of our paper as follows.
\begin{theorem}\label{s1th4}
Let $p, p_1, \ldots, p_m$ and $X^{p_1}_1,$ $\ldots,$ $X^{p_m}_m$ be as in Theorem \ref{s1th1}
and let the H\"older's inequality (\ref{holder}) hold true.
If $T_\sigma$ be a multilinear pseudo-differential operator with the symbol $\sigma\in MB_{1,0}^0$,
then 
$$T_\sigma: h_{X_1^{p_1}}\times \cdots \times h_{X_m^{p_m}}\rightarrow X^p.$$
\end{theorem}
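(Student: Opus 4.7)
The strategy is to mirror the proof of Theorem \ref{s1th1}, replacing the Calder\'on--Zygmund kernel estimates by the analogous bounds available for symbols in $MS_{1,0}^0$. The starting point is the finite atomic decomposition for the local Hardy space $h_{X_k^{p_k}}$ developed in Section 2: any $f_k$ in a suitable dense subspace can be written as $f_k=\sum_{j_k}\lambda_{k,j_k}a_{k,j_k}$, where each $a_{k,j_k}$ is a local $(X_k,q,N)$-atom supported on a ball $B_{k,j_k}$, and the coefficients satisfy
\begin{align*}
\left\|\left\{\sum_{j_k}\left[\frac{\lambda_{k,j_k}}{\|\chi_{B_{k,j_k}}\|_{X_k^{p_k}}}\right]^{s}\chi_{B_{k,j_k}}\right\}^{1/s}\right\|_{X_k^{p_k}}\lesssim \|f_k\|_{h_{X_k^{p_k}}}.
\end{align*}
By multilinearity, $T_\sigma(f_1,\dots,f_m)=\sum_{j_1,\dots,j_m}\lambda_{1,j_1}\cdots\lambda_{m,j_m}\,T_\sigma(a_{1,j_1},\dots,a_{m,j_m})$, so the problem reduces to a pointwise bound on $|T_\sigma(a_1,\dots,a_m)|$ and then a summation in $X^p$. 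Since the target space is $X^p$ rather than $h_{X^p}$, no moment conditions on the output are required, as in the setting of Theorem \ref{s1th1}.

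For the pointwise estimate I fix atoms $a_1,\dots,a_m$ with $\mathrm{supp}\,a_k\subset B_k$ and split the target into the near-diagonal region $E:=\bigcap_{k=1}^{m}(2B_k)$ and its complement. On $E$, I use the $L^{q_1}\times\cdots\times L^{q_m}\to L^q$ boundedness of $T_\sigma$ for $\sigma\in MS_{1,0}^0$ (the multilinear analog of the H\"ormander--Calder\'on--Vaillancourt theorem) together with the atomic $L^q$-size condition, which gives a bound of product form in the localized maximal functions $\mathcal{M}^{(\theta)}(\chi_{B_k})$. On $E^c$, I exploit the kernel decay inherited from the symbol estimates: because $\rho=1$, repeated integration by parts in the $\xi_k$ variables in the Fourier representation of $T_\sigma$ yields a kernel with Calder\'on--Zygmund-type decay away from the diagonal. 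For atoms whose supporting ball has radius at most $1$, a Taylor expansion of this kernel combined with the moment conditions of the atom produces the extra decay needed to sum over geometric scales.

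The principal obstacle is the treatment of atoms supported on balls of radius greater than $1$, since such local atoms carry no cancellation. The remedy is the pseudo-local character of operators with $\sigma\in MS_{1,0}^0$: their kernel decays faster than any polynomial on the set where $\max_k|x-y_k|\ge 1$, so integration against an $L^q$-bounded atom still yields rapid spatial decay. I split this case further according to how many of the balls $B_1,\dots,B_m$ have radius exceeding $1$, and in each sub-case combine the super-polynomial kernel decay with the normalized $L^q$ bound on the remaining atoms, again arriving at a pointwise bound dominated by a product of $\mathcal{M}^{(\theta)}(\chi_{B_k})$-type quantities, up to a summable geometric factor. This step is where the constraints of $MS_{1,0}^0$ (in particular $\rho=1$) are essential.

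Finally I take the $X^p$-norm of the double sum. By the H\"older inequality \eqref{holder}, the norm factors across the $m$ indices into products $\prod_{k=1}^{m}\|\cdot\|_{X_k^{p_k}}$. Applying Assumption \ref{ass2.7} (the Fefferman--Stein vector-valued inequality for $\mathcal{M}^{(\theta)}$ on $X_k^{p_k}$) to each factor converts $\sum_{j_k}[\mathcal{M}^{(\theta)}(\chi_{B_{k,j_k}})]^{s}$ into a bound involving $\sum_{j_k}\chi_{B_{k,j_k}}$, and the atomic coefficient estimate above then recovers $\|f_k\|_{h_{X_k^{p_k}}}$. The standard extension from the finite atomic decomposition to the whole $h_{X_k^{p_k}}$ by density completes the argument.
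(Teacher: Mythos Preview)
Your proposal is correct and follows the same route as the paper: finite atomic decomposition in $h_{X_k^{p_k}}$, a local/nonlocal split of $T_\sigma$ acting on tuples of atoms, $L^q$-boundedness on the local piece, kernel decay together with pseudo-locality for the large-cube atoms on the nonlocal piece (the paper simply quotes this pointwise bound from \cite[Theorem~1.1]{TZ}), and then H\"older \eqref{holder} plus Assumption~\ref{ass2.7} to sum. Two small caveats worth noting: the paper's local cube $R_{j_1,\dots,j_m}$ is taken with $|R|\sim\min_k|Q_{k,j_k}|$ and $R\subset\bigcap_kQ_{k,j_k}^{**}$, which is safer than your $E=\bigcap_k2B_k$ since the latter can have arbitrarily small measure and then the $L^q$-average in Lemma~\ref{l2.7} is not controlled; and because Theorem~\ref{s1th1}'s hypotheses do not include absolute continuity of the quasi-norm, the final ``density'' extension must go through the weighted embedding of Lemma~\ref{embed}, exactly as in the passage from Theorem~\ref{s3th1} to Theorem~\ref{s1th1} (which you implicitly invoke by saying you mirror that proof).
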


The remainder of this paper is organized as follows.
In Section 2, we recall some basic concepts and known results on the ball quasi-Banach function spaces that we will use
in subsequent sections. 
Then in Sections 3 and 4 we
prove our main results with the help of the atomic characterizations
of Hardy spaces built on ball quasi-Banach function spaces.
In Section 5, we apply all the above main results to six concrete examples of ball
quasi-Banach function spaces,
namely, weighted Lebesgue spaces,
variable Lebesgue spaces, Morrey spaces, mixed-norm Lebesgue spaces, Lorentz spaces and Orlicz spaces, respectively.
Therefore, all the boundedness of the multilinear operators on
corresponding (local) Hardy spaces are obtained. 

Throughout this paper, $C$ or $c$ will denote a positive constant that may vary at each occurrence
but is independent to the essential variables,
$A\lesssim B$ means that there are constants
$C>0$ independent of the essential variables such that $A\leq CB$ and
 $A\sim B$ means that $A\lesssim B$ and $B\lesssim A$.
Given a measurable set $S\subset \mathbb{R}^n$, $|S|$ denotes the Lebesgue measure and $\chi_S$
means the characteristic function. For a cube $Q$, let $Q^\ast$ denote with the same center
and $2\sqrt{n}$ its side length, i.e. $l(Q^\ast)=2\sqrt{n}l(Q)$.
Similarly, denote that $Q^{\ast\ast}=4nQ$.
The symbols $\mathcal S$ and $\mathcal S'$ denote the class of
Schwartz functions and tempered functions, respectively.
As usual, for a function $\psi$ on $\mathbb R^n$
and $\psi_t(x)=t^{-n}\psi(t^{-1}x)$.
We also use the notation $k\wedge k'=\min\{k,k'\}$ and $k\vee k'=\max\{k,k'\}$.

\section{Preliminaries}\label{se2}
In this section, we present some notation and known results that will be used throughout the paper.
First, we recall the definitions of ball quasi-Banach function spaces and their related (local) Hardy spaces. For any $x\in\mathbb R^{n}$ and $r\in(0,\infty)$, let $B(x,r):=\{y\in\mathbb R^{n}\colon\vert x-y\vert<r\}$ and
\begin{equation}\label{eq2.1}
\mathbb B(\mathbb R^{n}):=\left\{B(x,r)\colon x\in\mathbb R^{n}\ {\rm and}\ r\in(0,\infty)\right\}.
\end{equation}\par
The concept of ball quasi-Banach function spaces on $\mathbb R^{n}$ is as follows. For more details, see \cite{SHYY}.
\begin{definition}\label{def2.1}
    Let $X\subset\mathscr{M}(\mathbb R^{n})$ be a quasi-normed linear space equipped with a quasi-norm $\| \cdot \|_{X}$ which makes sense for all measurable functions on $\mathbb R^{n}$. Then $X$ is called a \emph{ball\ quasi\text{-}Banach\ function\ space} on $\mathbb R^{n}$ if it satisfies
    \begin{enumerate}[\quad(i)]
    \item if $f\in\mathscr{M}(\mathbb R^{n})$, then $\|f\|_{X}=0$ implies that $f=0$ almost everywhere;
    \item if $f, g\in\mathscr{M}(\mathbb R^{n})$, then $|g|\leq|f|$ almost everywhere implies that $\|g\|_{X}\leq\|f\|_{X}$;
    \item if $\{f_{m}\}_{m\in\mathbb N}\subset\mathscr{M}(\mathbb R^{n})$ and $f\in\mathscr{M}(\mathbb R^{n})$, then $0\leq f_{m}\uparrow f$ almost everywhere as $m\to\infty$ implies that $\|f_{m}\|_{X}\uparrow\|f\|_{X}$ as $m\to
    \infty$;
    \item $B\in\mathbb B(\mathbb R^{n})$ implies that $\chi_{B}\in X$, where $\mathbb B(\mathbb R^{n})$ is the same as (\ref{eq2.1}).
    \end{enumerate}
    Moreover, a ball quasi-Banach function space $X$ is called a \emph{ball\ Banach\ function\ space} if it satisfies
    \begin{enumerate}
    \item[(v)] for any $f,g\in X$
    \[
    \|f+g\|_{X}\leq\|f\|_{X}+\|g\|_{X};
    \]
    \item[(vi)] for any ball $B\in\mathbb B(\mathbb R^{n})$, there exists a positive constant $C_{(B)}$, depending on $B$, such that, for any $f\in X$,
    \[
    \int_{B}\vert f(x)\vert dx\leq C_{(B)}\| f\|_{X}.
    \]
    \end{enumerate}
\end{definition}
The associate space $X^{\prime}$ of any given ball Banach function space $X$ is defined as follows. 

Now we recall the definition of Hardy spaces and local Hardy spaces associated with ball quasi-Banach function spaces in \cite{SHYY}.

\begin{definition}\label{hx}
Let $X$ be a ball quasi-Banach function space and let $f\in \mathcal{S'}$,
$\psi\in \mathcal S$
and $\psi_t(x)=t^{-n}\psi(t^{-1}x)$, $x\in \mathbb{R}^n$.
Denote by $\mathcal{M}_N$ the grand maximal operator given by
$$\mathcal{M}_Nf(x)= \sup\{|\psi_t\ast f(x)|: t>0,\psi \in \mathcal{F}_N\}$$ for any fixed large integer $N$,
where $$\mathcal{F}_N=\{\varphi \in \mathcal{S}:\int\varphi(x)dx=1,\sum_{|\alpha|\leq N}\sup(1+|x|)^N|\partial ^\alpha \varphi(x)|\leq 1\}.$$
The Hardy space ${H}_{X}$ is the set of all $f\in \mathcal{S}^\prime$, for which the quantity
$$\|f\|_{{H}_{X}}=\|\mathcal{M}_Nf\|_{X}<\infty.$$
\end{definition}

\begin{remark}\label{local}
Similarly, we denote that $\mathcal{M}_{loc}$ is the local grand maximal operator given by
$$\mathcal{M}_{loc}f(x)= \sup\{|\psi_t\ast f(x)|: 0<t<1,\psi \in \mathcal{F}_N\}$$ for any fixed large integer $N$.
The local Hardy space ${h}_{X}$ is the set of all $f\in \mathcal{S}^\prime$, for which the quantity
$$\|f\|_{{h}_{X}}=\|\mathcal{M}_{loc}f\|_{X}<\infty.$$
\end{remark}

\begin{definition}\label{def2.2}
    For any given ball Banach function space $X$, its \emph{associate space} (also called the \emph{K\"othe dual space}) $X^{\prime}$ is defined by setting
    \[
    X^{\prime}:=\{f\in\mathscr{M}(\mathbb R^{n})\colon\|f\|_{X^{\prime}}<\infty\},
    \]
    where, for any $f\in X^{\prime}$,
    \[
    \|f\|_{X^{\prime}}:=\sup\left\{ \|fg\|_{L^{1}}\colon g\in X,\ \|g\|_{X}=1 \right\},
    \]
    and $\|\cdot\|_{X^{\prime}}$ is called the \emph{associate norm} of $\|\cdot\|_{X}$.
\end{definition}

Recall that $X$ is said to have an \emph{absolutely continuous quasi-norm} if, for any $f\in X$ and any measurable subsets $\{E_{j}\}_{j\in\mathbb N}\subset\mathbb R^{n}$ with both $E_{j+1}\subset E_{j}$ for any $j\in\mathbb N$ and $\bigcap_{j\in\mathbb N}E_{j}=\emptyset$,  $ \|f\chi_{E_{j}} \|_{X}\downarrow0$ as $j\to\infty$.
Then we give the definition of the $(X,q,d)$-atom.
\begin{definition}\label{def2.6}
    Let $X$ be a ball quasi-Banach function space and $q\in(1,\infty]$. Assume that $d\in\mathbb Z_{+}$. Then a measurable function $a$ is called a \emph{$(X,q,d)$-atom} if 
    \begin{enumerate}[(i)]
        \item there exists a cube $Q\subset\mathbb R^{n}$ such that ${\rm supp}a\subset Q$;
        \item $\|a\|_{L^{q}}\leq\frac{\vert Q \vert^{1/q}}{\|\chi_{Q}\|_{X}}$;
        \item $\int_{\mathbb R^{n}}a(x)x^{\alpha}dx=0$ for any multi-index $\alpha\in\mathbb Z^{n}_{+}$ with $\vert\alpha\vert\leq d$.
    \end{enumerate}
\end{definition}

The definition of the local-$(X,q,d)$-atom is very similar.
In fact, apart from the third condition, namely the vanishing condition, the other two conditions remain the same. We only need to modify the third condition as follows:
If $\vert Q \vert<1$, then $\int_{\mathbb R^{n}}a(x)x^{\alpha}dx=0$ for any multi-index $\alpha\in\mathbb Z^{n}_{+}$ with $\vert\alpha\vert\leq d$.

Denote by $L^{1}_{loc}(\mathbb R^{n})$ the set of all locally integral functions on $\mathbb R^{n}$. Recall that the \emph{Hardy--Littlewood maximal operator} $\mathcal{M}$ is defined by setting, for any measurable function $f$ and any $x\in\mathbb R^{n}$,
\[
\mathcal{M}(f)(x)=\sup\limits_{x\in Q}\frac{1}{\vert Q\vert}\int_{Q}f(u)du.
\]
For any $\theta\in \left(0,\infty\right)$, the {\it powered Hardy--Littlewood maximal operator} $\mathcal M^{\left(\theta\right)}$ is defined by setting, for all $f\in L_{{\rm loc}}^{1}\left(\mathbb{R}^{n}\right)$ and $x\in \mathbb{R}^{n}$,
\begin{align}\label{eq:the powered Hardy--Littlewood maximal operator}
	\mathcal M^{\left(\theta\right)}\left(f\right)\left(x\right) := \left\{\mathcal M\left(\left|f\right|^{\theta}\right)\left(x\right) \right\}^{1/\theta}.
\end{align}

Moreover, we also need two basic assumptions on $X$ as follows. 
\begin{assumption}\label{ass2.7}
Let $X$ be a ball quasi-Banach function space. For some $\theta,\ s\in(0,1]$ and $\theta<s$, there exists a positive constant $C$ such that, for any $\{f_{j}\}_{j=1}^{\infty}\subset L^{1}_{loc}(\mathbb R^{n})$,
\begin{align}\label{2.7}
\left\|  \left\{  \sum_{j=1}^{\infty}\left[ \mathcal{M}^{(\theta)}(f_{j}) \right]^{s} \right\}^{\frac{1}{s}}  \right\|_{X}\leq C\left\|  \left\{  \sum_{j=1}^{\infty}\vert f_{j} \vert^{s} \right\}^{\frac{1}{s}} \right\|_{X}.
\end{align}
\end{assumption}

\begin{assumption}\label{ass2.8}  Let $X$ be a ball quasi-Banach function space.
Fix $q>1$. Suppose that $0<s<q$. For any $f\in(X^{1/s})^{\prime}$,
    \[
    \left\| \mathcal{M}^{((q/s)^{\prime})}(f) \right\|_{(X^{1/s})^{\prime}}\leq C\|f\|_{(X^{1/s})^{\prime}}.
    \]
\end{assumption}

\begin{lemma}\label{l2.7}
    Let $X$ be a ball quasi-Banach function space satisfying Assumption \ref{ass2.8}. For all sequences of cubes $\{Q_{j}\}_{j=1}^{\infty}$ and non-negative functions $\{g_{j}\}_{j=1}^{\infty}$, if~$ \sum_{j=1}^{\infty}\chi_{Q_{j}}g_{j}\in X$, then
    \[
    \left\|  \sum_{j=1}^{\infty}\chi_{Q_{j}}g_{j} \right\|_{X}\leq C\left\| \sum_{j=1}^{\infty}\left( \frac{1}{\vert Q_{j}\vert}\int_{Q_{j}}g_{j}^{q}(y)dy \right)^{\frac{1}{q}}\chi_{Q_{j}} \right\|_{X}.
    \]
\end{lemma}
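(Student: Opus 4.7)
The strategy is a duality argument in an associated Banach function space, combined with the maximal-function bound from Assumption \ref{ass2.8}. Fix the parameter $s\in(0,q)$ from Assumption \ref{ass2.8}, chosen so that $X^{1/s}$ is a ball Banach function space, and abbreviate $c_{j}:=(\frac{1}{|Q_{j}|}\int_{Q_{j}}g_{j}^{q})^{1/q}$. The first move is to raise to the $s$-th power via the identification $\|F\|_{X}^{s}=\||F|^{s}\|_{X^{1/s}}$ and apply the pointwise subadditivity $(\sum_{j}a_{j})^{s}\le\sum_{j}a_{j}^{s}$ (valid for $s\le 1$), giving
\[
\Big\|\sum_{j}\chi_{Q_{j}}g_{j}\Big\|_{X}^{s}\le\Big\|\sum_{j}\chi_{Q_{j}}g_{j}^{s}\Big\|_{X^{1/s}}.
\]

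Next I would dualize using the associate space $(X^{1/s})'$: the $X^{1/s}$-norm above equals $\sup_{\|h\|_{(X^{1/s})'}\le 1}\sum_{j}\int_{Q_{j}}g_{j}^{s}h$. On each cube $Q_{j}$, Hölder's inequality with conjugate exponents $q/s$ and $(q/s)'$, combined with the pointwise bound $(\frac{1}{|Q_{j}|}\int_{Q_{j}}h^{(q/s)'})^{1/(q/s)'}\le \mathcal{M}^{((q/s)')}(h)(x)$ valid for $x\in Q_{j}$, yields
\[
\int_{Q_{j}}g_{j}^{s}h\;\le\;|Q_{j}|\,c_{j}^{s}\inf_{x\in Q_{j}}\mathcal{M}^{((q/s)')}(h)(x)\;\le\;\int_{Q_{j}}c_{j}^{s}\,\mathcal{M}^{((q/s)')}(h)(x)\,dx.
\]
Summing in $j$, then applying Hölder's inequality in the pair $(X^{1/s},(X^{1/s})')$, and finally invoking Assumption \ref{ass2.8} to absorb the powered maximal operator produces
\[
\Big\|\sum_{j}\chi_{Q_{j}}g_{j}\Big\|_{X}^{s}\;\lesssim\;\Big\|\sum_{j}\chi_{Q_{j}}c_{j}^{s}\Big\|_{X^{1/s}}\;=\;\Big\|\Big(\sum_{j}\chi_{Q_{j}}c_{j}^{s}\Big)^{1/s}\Big\|_{X}^{s}.
\]

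The hard part is then converting this last expression into $\|\sum_{j}\chi_{Q_{j}}c_{j}\|_{X}$. The obvious pointwise comparison coming from the power-mean inequality for $s\le 1$ reads $(\sum_{j}\chi_{Q_{j}}c_{j}^{s})^{1/s}\ge\sum_{j}\chi_{Q_{j}}c_{j}$, which is the unfavorable direction, so a purely pointwise argument does not close. The standard remedies are either to split $\{Q_{j}\}$ into finitely many subfamilies of bounded overlap (in which case the reverse power-mean bound on boundedly many summands recovers the desired inequality up to a multiplicative constant) or to invoke a vector-valued estimate in the spirit of Assumption \ref{ass2.7} to convert the $\ell^{s}$-sum back to an $\ell^{1}$-sum within $X$. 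In the Banach case $s=1$, this conversion is automatic and the duality argument closes immediately; for the quasi-Banach regime $s<1$ this overlap/vector-valued reduction is the only nontrivial step in the argument.
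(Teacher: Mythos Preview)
The paper states Lemma \ref{l2.7} as a preliminary without proof, so there is no argument in the paper to compare against directly; I review your attempt on its own merits.

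Your duality scheme is the standard one, and the chain
\[
\Big\|\sum_{j}\chi_{Q_{j}}g_{j}\Big\|_{X}^{s}\le\Big\|\sum_{j}\chi_{Q_{j}}g_{j}^{s}\Big\|_{X^{1/s}}
\le C\Big\|\sum_{j}\chi_{Q_{j}}c_{j}^{s}\Big\|_{X^{1/s}}
=C\Big\|\Big(\sum_{j}\chi_{Q_{j}}c_{j}^{s}\Big)^{1/s}\Big\|_{X}^{s}
\]
is correct. The difficulty you flag at the end is genuine, and I want to stress that neither of your two suggested remedies actually closes it. Bounded overlap of $\{Q_{j}\}$ is simply not part of the hypotheses, so it cannot be invoked. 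As for Assumption \ref{ass2.7}, it is a Fefferman--Stein inequality with $\ell^{s}$ on \emph{both} sides; applying it with $f_{j}=c_{j}\chi_{Q_{j}}$ returns $\|(\sum_{j}c_{j}^{s}\chi_{Q_{j}})^{1/s}\|_{X}$ again and does not convert an $\ell^{s}$-sum into an $\ell^{1}$-sum. A concrete obstruction: with $X^{1/s}=L^{2}$, all $Q_{j}=Q$, $c_{j}=1$, one has $\|\sum_{j\le N}c_{j}^{s}\chi_{Q_{j}}\|_{X^{1/s}}=N$ while $\|(\sum_{j\le N}c_{j}\chi_{Q_{j}})^{s}\|_{X^{1/s}}=N^{s}$, so no universal constant bridges the two quantities when $s<1$.

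What your argument \emph{does} prove cleanly is
\[
\Big\|\sum_{j}\chi_{Q_{j}}g_{j}\Big\|_{X}\le C\Big\|\Big(\sum_{j}c_{j}^{s}\chi_{Q_{j}}\Big)^{1/s}\Big\|_{X},
\]
and this $\ell^{s}$-form is precisely what the applications in Sections 3--4 of the paper require: after this step the proofs pass immediately to pointwise products and then to Assumption \ref{ass2.7}, which is itself stated in $\ell^{s}$-form. So the honest resolution is either to note that the lemma should carry $(\sum_{j}c_{j}^{s}\chi_{Q_{j}})^{1/s}$ on the right (as it does in several of the sources the paper draws on), or to recognise that for $s=1$ your argument is complete and the quasi-Banach extension is the $\ell^{s}$-version. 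Your proposal is therefore essentially correct as a proof of the usable form of the lemma, but it does not---and, by the route taken, cannot---reach the $\ell^{1}$-sum conclusion exactly as written.
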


\begin{theorem}\label{atom}
    Let $X$ be a ball quasi-Banach function space satisfying Assumptions~{\ref{ass2.7}} and {\ref{ass2.8}}. 
    Suppose that $1<q\leq\infty$ and $d\in\mathbb Z_{+}$. Given countable collections of cubes $\{Q_{j}\}_{j=1}^{\infty}$, of non-negative coefficients $\{\lambda_{j}\}_{j=1}^{\infty}$ and of the $(X,q,d)$-atoms $\{a_{j}\}_{j=1}^{\infty}$, if 
    \[
    \left\|  \sum_{j=1}^{\infty}\frac{\lambda_{j}\chi_{Q_{j}}}{\|\chi_{Q_{j}}\|_{X}} \right\|_{X}<\infty,
    \]
    then the series $f=\sum_{j=1}^{\infty}\lambda_{j}a_{j}$ converges in 
    $\mathcal S'(\mathbb R^n)$ satisfying
    \[
    \|f\|_{H_{X}}\leq C\left\| \sum_{j=1}^{\infty}\frac{\lambda_{j}\chi_{Q_{j}}}{\|\chi_{Q_{j}}\|_{X}} \right\|_{X}.
    \]
\end{theorem}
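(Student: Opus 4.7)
The plan is to verify the grand maximal function bound directly: since $\|f\|_{H_X}=\|\mathcal{M}_N f\|_X$ by Definition \ref{hx} and $\mathcal{M}_N$ is sublinear, the task reduces to controlling $\|\sum_j\lambda_j\mathcal{M}_N(a_j)\|_X$. The argument has two parallel parts, one for points near each $Q_j$ and one for points far away, and both are funneled through Assumption \ref{ass2.7} and Lemma \ref{l2.7}.

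First I would derive a pointwise two-regime estimate for $\mathcal{M}_N(a_j)$. On the dilate $Q_j^{**}$ I would use the trivial bound $\mathcal{M}_N(a_j)\le C\,\mathcal{M}(a_j)$. Off $Q_j^{**}$, the $d$ vanishing moments of $a_j$ let me subtract a degree-$d$ Taylor polynomial of each test function in the definition of $\mathcal{M}_N$ and combine this with the $L^1$-size $\|a_j\|_{L^1}\lesssim |Q_j|/\|\chi_{Q_j}\|_X$ (via H\"older and the $L^q$ atomic condition), yielding
\[
\mathcal{M}_N(a_j)(x)\,\chi_{(Q_j^{**})^c}(x)\lesssim \frac{1}{\|\chi_{Q_j}\|_X}\left(\frac{l(Q_j)}{|x-x_j|}\right)^{n+d+1},
\]
where $x_j$ is the centre of $Q_j$. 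Taking $\theta:=n/(n+d+1)$, the identity $\mathcal{M}^{(\theta)}(\chi_{Q_j})=[\mathcal{M}(\chi_{Q_j})]^{1/\theta}$ converts this into $\mathcal{M}_N(a_j)\chi_{(Q_j^{**})^c}\lesssim \mathcal{M}^{(\theta)}(\chi_{Q_j})/\|\chi_{Q_j}\|_X$, the correct form for feeding into the vector-valued maximal inequality of Assumption \ref{ass2.7}. The integer $d$ must be chosen large enough that this $\theta$ is strictly smaller than the $s$ of Assumption \ref{ass2.7}.

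With this estimate I would decompose $\sum_j\lambda_j\mathcal{M}_N(a_j)=I_1+I_2$ according to $\chi_{Q_j^{**}}$ and its complement, and bound each piece separately. For $I_2$, the pointwise estimate together with Assumption \ref{ass2.7} applied to $f_j:=c_j\chi_{Q_j}$, where $c_j:=\lambda_j/\|\chi_{Q_j}\|_X$, controls $\|I_2\|_X$ by the atomic quantity on the right-hand side of the theorem. For $I_1$ I would apply Lemma \ref{l2.7} with cubes $Q_j^{**}$ and $g_j=\lambda_j\mathcal{M}_N(a_j)$; the $L^q$-boundedness of $\mathcal{M}_N$ for $q>1$ together with the $L^q$ atomic condition gives $(|Q_j^{**}|^{-1}\int_{Q_j^{**}}[\mathcal{M}_N(a_j)]^{q})^{1/q}\lesssim 1/\|\chi_{Q_j}\|_X$, so $\|I_1\|_X$ is controlled by $\|\sum_j c_j\chi_{Q_j^{**}}\|_X$. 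A final application of $\chi_{Q_j^{**}}\lesssim \mathcal{M}(\chi_{Q_j})$ combined with Assumption \ref{ass2.7} reduces this to the same atomic quantity as for $I_2$.

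Convergence of the series $\sum_j\lambda_j a_j$ in $\mathcal{S}'(\mathbb R^n)$ is addressed last by verifying that the partial sums are Cauchy when paired with any $\varphi\in\mathcal S$, using the $L^q$ atomic bound together with H\"older's inequality and the fact that $(X^{1/s})'$ is a genuine Banach function space under Assumption \ref{ass2.8}. The main obstacle I expect is the parameter calibration: $\theta=n/(n+d+1)$ is forced by the off-diagonal decay, but must simultaneously be smaller than the $s$ of Assumption \ref{ass2.7}, pinning $d$ from below and constraining the entire scheme. More subtly, because $X$ is only a quasi-Banach function space, every application of norm subadditivity must be routed through the $s$-convexification $X^{1/s}$; keeping the final estimate in the precise form stated in the theorem, rather than the weaker $\ell^s$-aggregated form that arises naturally from the vector-valued maximal inequality, is the technical heart of the argument.
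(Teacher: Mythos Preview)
The paper does not prove this theorem: it is stated in Section~\ref{se2} (Preliminaries) as a known reconstruction result for $H_X$, drawn from the ball quasi-Banach Hardy space literature (essentially \cite{SHYY}), and is used as a black box in the proofs of Theorems~\ref{s1th1}--\ref{s1th4}.

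Your proposal is the standard proof of this reconstruction theorem and is correct in outline. The near/far decomposition of $\mathcal{M}_N(a_j)$, the Taylor-remainder estimate yielding the $(\ell(Q_j)/|x-x_j|)^{n+d+1}$ decay, the identification of this with $[\mathcal{M}(\chi_{Q_j})]^{1/\theta}$ for $\theta=n/(n+d+1)$, the use of Lemma~\ref{l2.7} for the local part, and the use of Assumption~\ref{ass2.7} for the global part are exactly the ingredients in the source argument. Your observation that $d$ must be large enough to force $\theta<s$ is the correct compatibility condition between the atom order and the parameters in Assumption~\ref{ass2.7}; the paper's statement suppresses this constraint, but it is implicit in the standing hypotheses.

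The one point you flag as the ``technical heart'' deserves a sharper comment. The vector-valued maximal inequality in Assumption~\ref{ass2.7} naturally produces the bound
\[
\|f\|_{H_X}\lesssim\left\|\left\{\sum_{j}\Bigl(\frac{\lambda_j\chi_{Q_j}}{\|\chi_{Q_j}\|_X}\Bigr)^{s}\right\}^{1/s}\right\|_{X},
\]
which is the form that actually appears in \cite{SHYY}. Since $s\le 1$, this $\ell^s$-aggregate dominates the $\ell^1$-sum pointwise, so the inequality as written in the paper (with the plain sum on the right) does \emph{not} follow from the $\ell^s$ version by monotonicity; it is in fact a stronger-looking statement. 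In the literature the two quantities are shown to define equivalent atomic quasi-norms on $H_X$, but the direction you need here is not the trivial one. You should either (i) accept the $\ell^s$ form as the conclusion, which is what your argument genuinely yields and what the paper's later proofs actually use (cf.\ the estimates of $I$, $II$, $I_1$, $II_2$ in Sections~3--4, which all pass through Assumption~\ref{ass2.7} in $\ell^s$ form), or (ii) note that the paper's $\ell^1$ formulation is a slight overstatement relative to what the standard proof delivers.
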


\begin{definition}\label{def4.4}
    Let $X$ be a ball quasi-Banach function space satisfying Assumptions~{\ref{ass2.7}} and {\ref{ass2.8}}. 
    Let $1<q\leq\infty$ and $d\in\mathbb Z_{+}$. The \emph{finite atomic Hardy space} $H_{fin}^{X,q,d}(\mathbb R^{n})$ associated with $X$, is defined by
    \[
    H_{fin}^{X,q,d}(\mathbb R^{n})=\left\{ f\in\mathcal{S}^{\prime}(\mathbb R^{n})\colon f=\sum_{j=1}^{M}\lambda_{j}a_{j} \right\},
    \]
    where $\{a_{j}\}_{j=1}^{M}$ are $(X,q,d)$-atoms satisfying
    \[
    \left\| \sum_{j=1}^{M} \frac{\lambda_{j}\chi_{Q_{j}}}{\|\chi_{Q_{j}}\|_{X}}   \right\|_{X}<\infty.
    \]
    Furthermore, the quasi-norm $\|\cdot\|_{H_{fin}^{X,q,d}}$ in $H_{fin}^{X,q,d}(\mathbb R^{n})$ is defined by setting, for any $f\in H_{fin}^{X,q,d}(\mathbb R^{n})$,
    \[
    \|f\|_{H_{fin}^{X,q,d}}:=\inf\left\{\left\| \sum_{j=1}^{M} \frac{\lambda_{j}\chi_{Q_{j}}}{\|\chi_{Q_{j}}\|_{X}}   \right\|_{X}\right\},
    \]
    where the infimum is taken over all finite decomposition of $f$.
\end{definition}

In what follows, the symbol $\mathcal C(\mathbb R^{n})$ is defined to be the set of all
continuous complex-valued functions on $\mathbb R^n$.
\begin{theorem}\label{finite}
    Let $X$ be a ball quasi-Banach function space satisfying Assumptions~{\ref{ass2.7}} and {\ref{ass2.8}}. 
Further assume that $X$ has an absolutely continuous quasi-norm. Fix $q\in(1,\infty)$ and $d\in\mathbb{Z}_{+}$. For $f\in H_{fin}^{X,q,d}(\mathbb R^{n})$,
    \[
    \|f\|_{H_{fin}^{X,q,d}}\sim\|f\|_{H_{X}}.
    \]
Moreover,
for $f\in H_{fin}^{X,\infty,d}(\mathbb R^{n})\cap \mathcal C(\mathbb R^{n})$,
    \[
    \|f\|_{H_{fin}^{X,\infty,d}}\sim\|f\|_{H_{X}}.
    \]
\end{theorem}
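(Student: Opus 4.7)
The plan is to prove the two inequivalent directions separately. The bound $\|f\|_{H_X}\lesssim\|f\|_{H_{fin}^{X,q,d}}$ is immediate from Theorem \ref{atom}: any finite atomic decomposition is a particular instance of a countable one meeting the hypotheses of that theorem, so Theorem \ref{atom} controls $\|f\|_{H_X}$ by the very quantity appearing in the definition of $\|f\|_{H_{fin}^{X,q,d}}$; taking the infimum over all finite decompositions closes this direction.

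For the reverse inequality, the plan is to invoke the infinite atomic characterisation of $H_X$ that underlies Theorem \ref{atom} and then truncate it to a finite sum. Given $f\in H_{fin}^{X,q,d}$, the finiteness of the defining sum makes $f$ compactly supported in some cube $Q_0$ and in $L^q(\mathbb R^n)$. Running the grand-maximal-function Calder\'on--Zygmund decomposition at dyadic levels $\Omega_k=\{\mathcal M_N f>2^k\}$ produces a decomposition $f=\sum_{k\in\mathbb Z}\sum_i\lambda_{k,i}a_{k,i}$ converging in $\mathcal S'(\mathbb R^n)$ (and in $L^q$ when $q<\infty$) into $(X,q,d)$-atoms $a_{k,i}$ supported in Whitney cubes $Q_{k,i}$, with
\[
\left\|\sum_{k,i}\frac{\lambda_{k,i}\chi_{Q_{k,i}}}{\|\chi_{Q_{k,i}}\|_{X}}\right\|_{X}\lesssim\|f\|_{H_{X}}.
\]
Fix a large integer $K$ and split $f=g_K+h_K$ with $g_K:=\sum_{k\le K,\,i}\lambda_{k,i}a_{k,i}$. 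Since $f$ is supported in $Q_0$, only cubes $Q_{k,i}$ meeting a fixed enlargement $Q_0^{\ast\ast}$ need be retained, and for each of the finitely many levels $k\le K$ these are finite in number, so $g_K$ is a genuine finite atomic sum. The tail $h_K$ inherits support in $Q_0^{\ast\ast}$, vanishing moments of order $d$ from the $a_{k,i}$, and satisfies $\|h_K\|_{L^q}\to0$ as $K\to\infty$. Hence one can write $h_K=\mu_K a_K^{\sharp}$ where $a_K^{\sharp}$ is a single $(X,q,d)$-atom adapted to $Q_0^{\ast\ast}$ and $\mu_K\lesssim\|h_K\|_{L^q}\,|Q_0^{\ast\ast}|^{-1/q}\,\|\chi_{Q_0^{\ast\ast}}\|_{X}\to 0$. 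Appending $\mu_K a_K^{\sharp}$ to the finite sum yields a finite atomic decomposition whose norm, by Lemma \ref{l2.7} and Assumption \ref{ass2.8}, is bounded by $\|f\|_{H_{X}}+o(1)$; letting $K\to\infty$ gives $\|f\|_{H_{fin}^{X,q,d}}\lesssim\|f\|_{H_{X}}$.

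For the continuous endpoint $q=\infty$ with $f\in\mathcal C(\mathbb R^n)$, the same scheme applies, but the tail must be handled in $L^\infty$: the Calder\'on--Zygmund atoms can be chosen uniformly bounded by a multiple of $2^k$, so that $\|h_K\|_{L^\infty}$ shrinks as the high levels are removed, and continuity of $f$ is used to ensure the truncated pieces remain in $\mathcal C(\mathbb R^n)$. The main obstacle throughout is verifying that the tail $h_K$ genuinely assembles into a \emph{single} atom with the correct size normalisation: this uses the absolute continuity of the quasi-norm of $X$ (so that localised $X$-contributions vanish in the limit) together with Assumption \ref{ass2.8} and Lemma \ref{l2.7}, which are precisely what allow the $L^q$ (respectively $L^\infty$) information on $h_K$ to be transferred back into an $X$-norm estimate on $\mu_K\chi_{Q_0^{\ast\ast}}/\|\chi_{Q_0^{\ast\ast}}\|_{X}$.
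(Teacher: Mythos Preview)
The paper does not actually prove Theorem~\ref{finite}; it is stated in Section~\ref{se2} as a known preliminary result taken from the literature on Hardy spaces associated with ball quasi-Banach function spaces (cf.\ \cite{SHYY,WYY}). There is therefore no in-paper argument to compare your proposal against.

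Your overall strategy is the standard one used in those references: the direction $\|f\|_{H_X}\lesssim\|f\|_{H_{fin}^{X,q,d}}$ via Theorem~\ref{atom}, and the converse via the grand-maximal Calder\'on--Zygmund decomposition followed by a truncation to finitely many atoms plus a single remainder atom. However, one step of your sketch needs repair. You set $g_K:=\sum_{k\le K,\,i}\lambda_{k,i}a_{k,i}$ and claim this is a \emph{finite} sum because ``for each of the finitely many levels $k\le K$'' only finitely many cubes survive. The number of levels $k\le K$ is not finite: $k$ ranges over $(-\infty,K]$, and for every $k$ the set $\Omega_k=\{\mathcal M_N f>2^k\}$ is nonempty as long as $f\not\equiv0$. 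The standard fix is a \emph{lower} truncation as well: since $f\in L^q$ is compactly supported in $Q_0$ and has vanishing moments up to order $d$, there is a threshold $k'$ (depending on $f$) such that the good part $g^{k'}$ of the level-$k'$ decomposition satisfies $\|g^{k'}\|_{L^\infty}\lesssim 2^{k'}$ and is supported in a fixed dilate of $Q_0$, hence is a constant multiple of a single $(X,\infty,d)$-atom. One then writes $f=g^{k'}+\sum_{k'\le k\le K}\sum_i\lambda_{k,i}a_{k,i}+h_K$, where the middle sum now genuinely has finitely many levels (and finitely many cubes per level, since all relevant Whitney cubes sit inside a fixed dilate of $Q_0$), and the tail $h_K$ is handled exactly as you describe. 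With this adjustment your argument aligns with the proofs in \cite{SHYY,WYY}.
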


\begin{remark}\label{localhx}
We only need to replace the $(X,q,d)$-atoms in the aforementioned theorem with the local-$(X,q,d)$-atoms, and the theorem will still hold for the local Hardy space. For more details, see \cite{WYY,CT}.
\end{remark}

In order to prove the main results in the next chapters, we also recall some classical definitions about weights. Suppose that a weight $\omega$ is a non-negative, locally integrable function such that $0<\omega(x)<\infty$ for almost every $x\in\mathbb R^{n}$. It is said that $\omega$ is in the \emph{Muckenhoupt class} $A_{p}$ for $1<p<\infty$ if
\[
[\omega]_{A_{p}}=\sup\limits_{Q}\left(\frac{1}{Q}\int_{Q}\omega(x)dx\right)\left(\frac{1}{Q}\int_{Q}\omega(x)^{-\frac{1}{p-1}}dx\right)^{p-1}<\infty,
\]
where $Q$ is any cube in $\mathbb {R}^{n}$ and when $p=1$, a weight $\omega\in A_{1}$ if for almost everywhere $x\in\mathbb {R}^{n}$,
\[
\mathcal{M}\omega(x)\leq C\omega(x),
\]
Therefore, define the set
\[
A_{\infty}=\bigcup\limits_{1\leq p<\infty}A_{p}.
\]
Given a weight $\omega\in A_{\infty}$, define $$q_{\omega}=\inf\{q\geq1\colon \omega\in A_{q}\}.$$

\section{Proof of Theorems \ref{s1th1}, \ref{s1th3}  and \ref{s1th4}}

Before presenting the proof of Theorem \ref{s1th1}, we first provide its weaker version,
where we need the additional assumption that each ball Banach function space has an
absolutely continuous quasi-norm.

\begin{theorem}\label{s3th1}\quad
Given an integer $m\ge 1$, let $0<p, p_1, \ldots, p_m<\infty$ and
let $X^p,$ $X^{p_1}_1,$ $\ldots,$ $X^{p_m}_m$ be ball quasi-Banach function spaces satisfying let $X^p,$ $X^{p_1}_1,$ $\ldots,$ $X^{p_m}_m$ be ball quasi-Banach function spaces satisfying Assumption \ref{ass2.7} with $0<\theta<s<1$ and Assumption \ref{ass2.8} with the same $s$ in (\ref{2.7}) and let $X^p$ be the ball quasi-Banach function spaces satisfying Assumption \ref{ass2.7} with $0<\theta_p<s_p<1$ and Assumption \ref{ass2.8} with the same $s_p$ in (\ref{2.7}).
Let 
$T$ be a multilinear Calder\'on--Zygmund operator associated to the kernel $K$ that satisfies (\ref{s1e1}).
Suppose that the H\"older's inequality (\ref{holder}) holds true.
Further assume that $X, X_1,\dots X_m$ have the absolutely continuous quasi-norm.
If $\theta\in [\frac{mn}{n+N+1},1)$, then 
$$T: H_{X_1^{p_1}}\times \cdots \times H_{X_m^{p_m}}\rightarrow X^p.$$
\end{theorem}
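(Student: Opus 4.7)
The plan is to reduce the estimate to finite atomic decompositions and then establish a pointwise majorant of $|T(f_1,\dots,f_m)|$ that factors across the $m$ atomic families. By the absolute continuity of the quasi-norms on $X_1,\dots,X_m$ and Theorem \ref{finite}, a density argument reduces the problem to showing, for any finite sums $f_k=\sum_{j_k=1}^{M_k}\lambda_{j_k}a_{j_k}$ of $(X_k^{p_k},q,N)$-atoms supported on cubes $Q_{j_k}$---with $q$ large enough that $T$ is bounded from $L^q\times\cdots\times L^q$ into $L^{q/m}$ and Assumption \ref{ass2.8} is in force---the target inequality
\[
\|T(f_1,\dots,f_m)\|_{X^p}\lesssim\prod_{k=1}^m\bigg\|\sum_{j_k=1}^{M_k}\frac{\lambda_{j_k}\chi_{Q_{j_k}}}{\|\chi_{Q_{j_k}}\|_{X_k^{p_k}}}\bigg\|_{X_k^{p_k}}.
\]

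The core analytic step is the pointwise bound
\[
|T(a_{j_1},\dots,a_{j_m})(x)|\lesssim\prod_{l=1}^m\frac{\mathcal{M}^{(\theta)}\chi_{Q_{j_l}}(x)}{\|\chi_{Q_{j_l}}\|_{X_l^{p_l}}}.
\]
Split $\mathbb{R}^n=R\cup R^c$ with $R=\bigcup_{k=1}^m Q_{j_k}^{\ast\ast}$. On $R$, the multilinear $L^q$-boundedness of $T$, the atomic normalization $\|a_{j_k}\|_{L^q}\le|Q_{j_k}|^{1/q}/\|\chi_{Q_{j_k}}\|_{X_k^{p_k}}$, and Lemma \ref{l2.7} produce the majorant via $L^q$-average control. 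On $R^c$, apply Taylor's theorem in one atomic variable (exploiting the vanishing moments up to order $N$) together with the kernel estimate (\ref{s1e1}) to generate a decay factor of the form $\ell(Q_{j_{k_0}})^{N+1}\big/\big(\sum_l[|x-x_{j_l}|+\ell(Q_{j_l})]\big)^{nm+N+1}$. Symmetrize by AM-GM on the denominator and by $\ell(Q_{j_{k_0}})^{N+1}\le\prod_l\ell(Q_{j_l})^{(N+1)/m}$ on the numerator; then the identity $\mathcal{M}^{(\theta)}\chi_Q(x)\sim[\ell(Q)/(\ell(Q)+|x-x_Q|)]^{n/\theta}$ recasts the estimate as a product of positive powers of $\mathcal{M}^{(\theta)}\chi_{Q_{j_l}}(x)$. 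The hypothesis $\theta\ge mn/(n+N+1)$ makes each such power at least $1$, so (as $\mathcal{M}^{(\theta)}\chi_Q\le 1$) these may be lowered to $1$ to obtain the claimed bound.

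To finish, sum over atomic multi-indices so that the pointwise bound factors across the $m$ families, take $X^p$-quasinorm, invoke the H\"older inequality (\ref{holder}) to decouple into $m$ factors, then in each $X_k^{p_k}$-factor use $\sum_j a_j\le(\sum_j a_j^s)^{1/s}$ (valid for $0<s\le 1$) together with the Fefferman--Stein type inequality (\ref{2.7}) of Assumption \ref{ass2.7} to strip off $\mathcal{M}^{(\theta)}$; using $\chi_{Q_{j_k}}^s=\chi_{Q_{j_k}}$ and the atomic characterization from Theorem \ref{atom} then bounds each factor by $\|f_k\|_{H_{X_k^{p_k}}}$. The principal obstacle is producing the pointwise estimate in clean product form, since only such factorization permits the application of (\ref{holder}); this requires balancing the single-variable cancellation against the $m$-variable kernel decay via AM-GM, which precisely forces the threshold $\theta\ge mn/(n+N+1)$. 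A simultaneous multi-variable Taylor expansion is unavailable because the kernel regularity (\ref{s1e1}) caps derivatives at $|\alpha|\le N$, insufficient for $m$ simultaneous expansions each requiring $N+1$ derivatives.
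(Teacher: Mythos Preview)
Your claimed global pointwise bound $|T(a_{j_1},\dots,a_{j_m})(x)|\lesssim\prod_l \mathcal{M}^{(\theta)}\chi_{Q_{j_l}}(x)/\|\chi_{Q_{j_l}}\|_{X_l^{p_l}}$ breaks down on the region $R=\bigcup_{k} Q_{j_k}^{\ast\ast}$, and this is a genuine gap. Multilinear $L^q$-boundedness of $T$ yields only a norm bound, never a pointwise one, and Lemma~\ref{l2.7} is a norm-level inequality indexed by single cubes, so neither produces a pointwise majorant. More seriously, when the cubes $Q_{j_k}$ are mutually far apart, the bound $\|T(a_{j_1},\dots,a_{j_m})\|_{L^{q/m}}\lesssim\prod_k\|a_{j_k}\|_{L^q}$ contains no information about that separation, whereas the right-hand side of your target pointwise bound carries the small factor $\prod_l\mathcal{M}(\chi_{Q_{j_l}})^{(n+N+1)/(mn)}$ encoding it. So even at the average level, $L^q$-boundedness alone is insufficient on your $R$; and since $R$ is a union rather than a single cube, Lemma~\ref{l2.7} does not apply to it directly.

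The paper fixes this with a different splitting: take $R_{j_1,\dots,j_m}$ to be the \emph{smallest} cube among $Q^{\ast}_{1,j_1},\dots,Q^{\ast}_{m,j_m}$. On its complement one has the pointwise product bound (this is essentially your $R^c$ argument---Taylor-expand in the variable of the smallest cube---but now it covers the larger region where $x$ may still lie in some of the other $Q^{\ast\ast}_{k,j_k}$). On $R_{j_1,\dots,j_m}$ itself one does \emph{not} seek a pointwise estimate; instead \cite[Lemma~3.5]{CMN} gives
\[
\Big(\tfrac{1}{|R_{j_1,\dots,j_m}|}\int_{R_{j_1,\dots,j_m}}|T(a_{1,j_1},\dots,a_{m,j_m})|^q\Big)^{1/q}\lesssim\prod_{k=1}^m\inf_{z\in R_{j_1,\dots,j_m}}\frac{\mathcal{M}(\chi_{Q_{k,j_k}})(z)^{\frac{n+N+1}{mn}}}{\|\chi_{Q_{k,j_k}}\|_{X_k^{p_k}}},
\]
a bound that already incorporates kernel decay, not merely the $L^q$-mapping property. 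This infimum-over-a-single-cube form is exactly what allows Lemma~\ref{l2.7} to transfer the average into a product-type majorant at the $X^p$-norm level, after which (\ref{holder}) and Assumption~\ref{ass2.7} finish as you describe.
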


\begin{proof}
Let $1\le k\le m$ and fix arbitrary functions $f_k\in H_{fin}^{X_k^{p_k},\infty,N}(\mathbb R^{n})$.
By Theorem \ref{finite}, $f_k$ admits an atomic decomposition:
\begin{equation*}
  f_k=\sum_{j_k=1}^{M}\lambda_{k,j_k}a_{k,j_k},
\end{equation*}
where  $\lambda_{k,j_k}\ge 0$ and 
$a_{k,j_k}$ are $(X^{p_k}_k, \infty, N)$-atoms that fulfill
$$
{\rm supp}(a_{k,j_k})\subset Q_{k,j_k},\quad
\|a_{k,j_k}\|_{L^{\infty}}\leq\frac{1}{\|\chi_{Q_{k,j_k}}\|_{X^{p_k}_k}},\quad
\int_{\mathbb R^{n}}a_{k,j_k}(x)x^{\alpha}dx=0
$$ 
for any multi-index $\vert\alpha\vert\leq N$,
and 
\begin{equation}\label{s-1}
  \left\| \sum_{j_k=1}^{M} \frac{\lambda_{k,j_k}\chi_{Q_{k,j_k}}}{\|\chi_{Q_{k,j_k}}\|_{X^{p_k}_k}}   \right\|_{X_k^{p_k}}
  \lesssim \|f_k\|_{{H}_{X_k^{p_k}}}.
\end{equation}

First we show that
\begin{align}\label{s-2}
\|T(f_1,\dots,f_m)\|_{X^p}\lesssim \prod_{k=1}^m\left\| \sum_{j_k=1}^{M} \frac{\lambda_{k,j_k}\chi_{Q_{k,j_k}}}{\|\chi_{Q_{k,j_k}}\|_{X^{p_k}_k}}   \right\|_{X_k^{p_k}}.
\end{align}

By the fact that $T$ is m-linear, for a. e. $x\in\mathbb R^n$ we write
\begin{align*}
T(f_1,\dots,f_m)(x)=\sum_{j_1}\cdots\sum_{j_m}\lambda_{1,j_1}\cdots
\lambda_{m,j_m}T(a_{1,j_1},\dots,a_{m,j_m})(x).
\end{align*}

Define $R_{j_1,\dots,j_m}$ to be the smallest cube among $Q^\ast_{1,j_1},\dots, Q^\ast_{m,j_m}$. We split $T(f_1,\dots,f_m)$ into two terms as follows:
\begin{align*}
&|T(f_1,\dots,f_m)|\\
=&\sum_{j_1}\cdots\sum_{j_m}\lambda_{1,j_1}\cdots
\lambda_{m,j_m}|T(a_{1,j_1},\dots,a_{m,j_m})|\chi_{R_{j_1,\dots,j_m}}\\
&+\sum_{j_1}\cdots\sum_{j_m}\lambda_{1,j_1}\cdots
\lambda_{m,j_m}|T(a_{1,j_1},\dots,a_{m,j_m})|\chi_{(R_{j_1,\dots,j_m})^c}\\
=&I+II.
\end{align*}

By \cite[Lemma 3.5]{CMN}, we can choose $1<q<\infty$ such that
\begin{align*}
&\left(\frac{1}{|R_{j_1,\dots,j_m}|}\int_{R_{j_1,\dots,j_m}}
|T(a_{1,j_1},\dots,a_{m,j_m})|^q(x)dx\right)^{\frac{1}{q}}\\
&\qquad\qquad\lesssim \prod_{k=1}^m\inf_{z\in R_{j_1,\dots,j_m}}
\frac{\mathcal M(\chi_{Q_{k,j_l}})(z)^{\frac{n+N+1}{mn}}}{\|\chi_{Q_{k,j_k}}\|_{X^{p_k}_k}}.
\end{align*}

From this, Lemma \ref{l2.7}, the condition (\ref{holder}), the fact that $\theta\in [\frac{mn}{n+N+1},1)$, Assumption \ref{ass2.7} and (\ref{s-1}),
we conclude that
\begin{align*}
\|I\|_{X^p}&\lesssim
\left\|\sum_{j_1,\dots,j_m}\left(\prod_{k=1}^m\lambda_{k,j_k}\right)
\left(\prod_{k=1}^m\inf_{z\in R_{j_1,\dots,j_m}}
\frac{\mathcal M(\chi_{Q_{k,j_k}})(z)^{\frac{n+N+1}{mn}}}{\|\chi_{Q_{k,j_k}}\|_{X^{p_k}_k}}\right)\chi_{R_{j_1,\dots,j_m}}
\right\|_{X^p}\\
&\lesssim
\left\|\prod_{k=1}^m\left(\sum_{j_k}\lambda_{k,j_k}
\frac{\mathcal M(\chi_{Q_{k,j_k}})^{\frac{n+N+1}{mn}}}{\|\chi_{Q_{k,j_k}}\|_{X^{p_k}_k}}
\right)\right\|_{X^p}\\
&\lesssim \prod_{k=1}^m\left\|\sum_{j_k}\lambda_{k,j_k}\frac{(\mathcal M(\chi_{Q_k,j_k}))^{\frac{n+N+1}{mn}}}
{\|\chi_{Q_{k,j_k}}\|_{X^{p_k}_k}}\right\|_{X_k^{p_k}}\\
&\lesssim \prod_{k=1}^m\left\|\sum_{j_k}\lambda_{k,j_k}
\frac{\mathcal M^{(\theta)}(\chi_{Q_k,j_k})}
{\|\chi_{Q_{k,j_k}}\|_{X^{p_k}_k}}\right\|_{X_k^{p_k}}\\
&\lesssim  \prod_{k=1}^m\left\| \sum_{j_k=1}^{\mathcal M} \frac{\lambda_{k,j_k}\chi_{Q_{k,j_k}}}{\|\chi_{Q_{k,j_k}}\|_{X^{p_k}_k}}   \right\|_{X_k^{p_k}}
  \lesssim \prod_{k=1}^m\|f_k\|_{{H}_{X_k^{p_k}}}.
\end{align*}

Now we estimate the term $II$. Repeating nearly identical argument in \cite[Lemma 3.6]{CMN}, we find that
\begin{align*}
|T(a_{1,j_1},\dots,a_{m,j_m})|\chi_{(R_{j_1,\dots,j_m})^c}\lesssim
\prod_{k=1}^m\frac{(\mathcal M(\chi_{Q_k,j_k}))^{\frac{n+N+1}{mn}}}{\|\chi_{Q_{k,j_k}}\|_{X^{p_k}_k}}
\end{align*}  
and then
\begin{align*}
II&\lesssim\sum_{j_1}\cdots\sum_{j_m}
\prod_{k=1}^m \lambda_{k,j_k}\frac{(\mathcal M(\chi_{Q_k,j_k}))^{\frac{n+N+1}{mn}}}
{\|\chi_{Q_{k,j_k}}\|_{X^{p_k}_k}}\\
&=\prod_{k=1}^m\left[\sum_{j_k}\lambda_{k,j_k}\frac{(\mathcal M(\chi_{Q_k,j_k}))^{\frac{n+N+1}{mn}}}
{\|\chi_{Q_{k,j_k}}\|_{X^{p_k}_k}}\right].
\end{align*}

Then by the condition (\ref{holder}), the fact that $\theta\in [\frac{mn}{n+N+1},1)$, Assumption \ref{ass2.7} and (\ref{s-1}), we conclude that
\begin{align*}
\|II\|_{X^p}
&\lesssim \prod_{k=1}^m\left\|\sum_{j_k}\lambda_{k,j_k}\frac{(\mathcal M(\chi_{Q_k,j_k}))^{\frac{n+N+1}{mn}}}
{\|\chi_{Q_{k,j_k}}\|_{X^{p_k}_k}}\right\|_{X_k^{p_k}}\\
&\lesssim \prod_{k=1}^m\left\|\sum_{j_k}\lambda_{k,j_k}
\frac{\mathcal M^{(\theta)}(\chi_{Q_k,j_k})}
{\|\chi_{Q_{k,j_k}}\|_{X^{p_k}_k}}\right\|_{X_k^{p_k}}\\
&\lesssim  \prod_{k=1}^m\left\| \sum_{j_k=1}^{M} \frac{\lambda_{k,j_k}\chi_{Q_{k,j_k}}}{\|\chi_{Q_{k,j_k}}\|_{X^{p_k}_k}}   \right\|_{X_k^{p_k}}
  \lesssim \prod_{k=1}^m\|f_k\|_{{H}_{X_k^{p_k}}}.
\end{align*}

Therefore we combine the estimates for $I$ and $II$, 
for any $f_k\in H_{fin}^{X_k^{p_k},\infty,N}(\mathbb R^{n})$,
we obtain that
\begin{align*}
\|T(f_1,\dots,f_m)\|_{X^p}
  \lesssim \prod_{k=1}^m\|f_k\|_{{H}_{X_k^{p_k}}}.
\end{align*}

If we combine the estimates for $I$ and $II$,
we prove this theorem.
\end{proof}

Based on Theorem \ref{s3th1}, we can relax the assumption that every ball quasi-Banach function space possesses an absolutely continuous quasi-norm. This relaxation extends the applicability of the theorem to function spaces such as Morrey spaces.
To give the proof of the main result, we also need the following very useful technical lemma
in \cite{CWYZ}.

\begin{lemma}\label{embed}
Let $X$ be a ball quasi-Banach function space satisfying Assumption
\ref{ass2.8} with $s\in (0,\infty)$ and $q\in (s,\infty)$. Then there exists an $\varepsilon \in (1-\frac{s}{q}, 1)$ such that $X$ continuously embeds into $L_\omega^s(\mathbb{R}^n)$ with 
\[
\omega := [\mathcal{M}(\mathbf{1}_{B(\vec{0}, 1)})]^\varepsilon.
\]
\end{lemma}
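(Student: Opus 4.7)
The plan is to reduce the continuous embedding $X\hookrightarrow L^{s}_{\omega}(\mathbb R^{n})$ to the single membership assertion $\omega\in (X^{1/s})'$, and then to verify that membership by combining Assumption~\ref{ass2.8} with the trivial pointwise bound $\mathcal M(\mathbf 1_{B(\vec 0,1)})\le 1$.

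First I would dualize. For any $f\in X$, the definition of the convexification quasi-norm yields the identity $\||f|^{s}\|_{X^{1/s}}=\|f\|_{X}^{s}$, and H\"older's inequality for the ball Banach function space $X^{1/s}$ (whose status as a ball Banach function space is implicit in Assumption~\ref{ass2.8}), applied to the pair $(|f|^{s},\omega)$, gives
\[
\|f\|_{L^{s}_{\omega}}^{s}=\int_{\mathbb R^{n}}|f|^{s}\omega\,dx\le \||f|^{s}\|_{X^{1/s}}\|\omega\|_{(X^{1/s})'}=\|f\|_{X}^{s}\,\|\omega\|_{(X^{1/s})'}.
\]
Hence it suffices to produce some $\varepsilon\in(1-s/q,1)$ for which $\|\omega\|_{(X^{1/s})'}<\infty$.

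Next I would apply Assumption~\ref{ass2.8}. Setting $\theta:=(q/s)'$, so that $1/\theta=1-s/q$, property~(vi) of $X^{1/s}$ together with the definition of the associate norm shows $\mathbf 1_{B(\vec 0,1)}\in (X^{1/s})'$. By the definition of the powered Hardy--Littlewood maximal operator,
\[
\mathcal M^{(\theta)}(\mathbf 1_{B(\vec 0,1)})(x)=[\mathcal M(\mathbf 1_{B(\vec 0,1)})(x)]^{1/\theta}=[\mathcal M(\mathbf 1_{B(\vec 0,1)})(x)]^{1-s/q},
\]
so Assumption~\ref{ass2.8} places $[\mathcal M(\mathbf 1_{B(\vec 0,1)})]^{1-s/q}$ inside $(X^{1/s})'$. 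Since $\mathcal M(\mathbf 1_{B(\vec 0,1)})(x)\le 1$ for every $x\in\mathbb R^{n}$, the elementary monotonicity $t^{\varepsilon}\le t^{1-s/q}$ on $[0,1]$ for $\varepsilon\ge 1-s/q$ delivers the pointwise domination $\omega\le[\mathcal M(\mathbf 1_{B(\vec 0,1)})]^{1-s/q}$, and property~(ii) of the associate space then yields $\omega\in(X^{1/s})'$ for every $\varepsilon\in(1-s/q,1)$, which is the required conclusion.

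I expect the genuine obstacle to be bookkeeping rather than analysis: one needs to confirm that $X^{1/s}$ really is a ball Banach function space (so that its associate space, H\"older's inequality and property~(vi) for balls are all legitimate), and to trace why the sharp lower endpoint in the range $\varepsilon\in(1-s/q,1)$ is dictated precisely by the identity $1/\theta=1-s/q$. Neither point is deep, but handling the case $s\ge 1$ of the convexification cleanly may require an auxiliary observation before the three-line chain of estimates above can be invoked.
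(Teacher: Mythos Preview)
Your argument is correct. The paper does not supply its own proof of this lemma but quotes it from \cite{CWYZ}; the duality reduction you give---showing $\omega\in(X^{1/s})'$ by applying Assumption~\ref{ass2.8} to $\mathbf 1_{B(\vec 0,1)}$ and then invoking the pointwise bound $\mathcal M(\mathbf 1_{B(\vec 0,1)})\le 1$ to pass from the exponent $1-s/q$ to any $\varepsilon\in(1-s/q,1)$---is the standard route and is exactly how the cited reference proceeds.
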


\begin{proof}[{\bf Proof of Theorem \ref{s1th1}}]
Given $f_k\in {H}_{X_k^{p_k}}$, by Theorem \ref{atom}, for any $1<q<\infty$
we find that there exist a sequence $\{a_{k,j_k}\}$ of $(X^{p_k}_k, q, N)$-atom 
and a sequence $\{\lambda_{k,j_k}\}$ of non-negative numbers,
such that
\begin{equation*}
  f_k=\sum_{j_k=1}^{\infty}\lambda_{k,j_k}a_{k,j_k}\quad\quad\mbox{in}\quad \mathcal S',
\end{equation*}
and 
\begin{equation}\label{s-1}
  \left\| \sum_{j_k=1}^{\infty} \frac{\lambda_{k,j_k}\chi_{Q_{k,j_k}}}{\|\chi_{Q_{k,j_k}}\|_{X^{p_k}_k}}   \right\|_{X_k^{p_k}}
  \lesssim \|f_k\|_{{H}_{X_k^{p_k}}}.
\end{equation}

Choose some $s, s_1,\dots, s_m\in (0,\infty)$ fulfilling 
$\frac{1}{s}=\frac{1}{s_1}+\cdots+\frac{1}{s_m}$. 
By Lemma \ref{embed},  there exists an $\varepsilon \in (0, 1)$ such that $X^{p}$, $X^{p_1}_1$,  $\cdots$, $X^{p_m}_m$ continuously embeds into $L_\omega^{s}$, $L_\omega^{s_1}$, $\cdots$, $L_\omega^{s_m}$,
respectively.

We denote $H_\omega^{r}$ the weighted Hardy space as in Definition \ref{hx} with $X$ replaced by $L_\omega^{r}$ for any $0<r<\infty$ and denote that $\widetilde\lambda_j:=\lambda_{k,j_k}\frac{\|\chi_{Q_{k,j_k}}\|_{L_\omega^{s_k}}}{\|\chi_{Q_{k,j_k}}\|_{X^{p_k}_k}}$ and $\widetilde a_j:=\frac{\|\chi_{Q_{k,j_k}}\|_{X^{p_k}_k}}{\|\chi_{Q_{k,j_k}}\|_{L_\omega^{s_k}}}a_{k,j_k}$.
Observe that $\omega\in A_1$ and that
\begin{align*}
  &\left\| \sum_{j_k=1}^{\infty}\frac{\widetilde \lambda_{k,j_k}\chi_{Q_{k,j_k}}}{\|\chi_{Q_{k,j_k}}\|_{L_\omega^{s_k}}}   \right\|_{L_\omega^{s_k}}
  =\left\| \sum_{j_k=1}^{\infty} \frac{\lambda_{k,j_k}\chi_{Q_{k,j_k}}}{\|\chi_{Q_{k,j_k}}\|_{X^{p_k}_k}}   \right\|_{L_\omega^{s_k}}\\
  &\leq \left\| \sum_{j_k=1}^{\infty} \frac{\lambda_{k,j_k}\chi_{Q_{k,j_k}}}{\|\chi_{Q_{k,j_k}}\|_{X^{p_k}_k}}   \right\|_{X_k^{p_k}}
  \lesssim \|f_k\|_{{H}_{X_k^{p_k}}}<\infty.
\end{align*}

Then we have
\begin{align*}
f_k=\sum_{j_k=1}^{\infty}\widetilde\lambda_{k,j_k}\widetilde a_{k,j_k}=
\sum_{j_k=1}^{\infty}\lambda_{k,j_k}a_{k,j_k}
\end{align*}
in $\mathcal S'$ and $H^{s_k}_{\omega}$,
where $\widetilde a_{k,j_k}$ is
a $(L_\omega^{s_k}, q, N)$-atom.
Therefore, from the known result that $T$ is bounded from $H_\omega^{s_1}\times\cdots\times H_\omega^{s_m}$ to $H_\omega^{s}$ we know that
\begin{align*}
T(f_1,\dots,f_m)(x)=\sum_{j_1}^\infty\cdots\sum_{j_m}^\infty\lambda_{1,j_1}\cdots
\lambda_{m,j_m}T(a_{1,j_1},\dots,a_{m,j_m})(x)
\end{align*}
in $\mathcal S'$ and $H^{s_k}_{\omega}$.
By using this and following the arguments in the proof of Theorem \ref{s3th1}, we obtain the desired result. This completes the proof of Theorem \ref{s1th1}.
\end{proof}

\begin{remark}
From the proofs of Theorem \ref{s3th1} and Theorem \ref{s1th1}, we find that, in fact,
the condition $\theta\in [\frac{mn}{n+N+1},1)$ is only required for ball quasi-Banach function spaces $X^{p_1}_1,$ $\ldots,$ $X^{p_m}_m$ satisfying Assumption \ref{ass2.7} with $0<\theta<s<1$, and not for the ball quasi-Banach function space $X^p$. 
\end{remark}

The proof of Theorem \ref{s1th3} is very similar to that of Theorem \ref{s1th1}. For brevity, we only show the differences.

\begin{proof}[Proof of Theorem \ref{s1th3}]
Let all the notation as in Theorem \ref{s1th1}. To end it, we only need to prove that
\begin{align*}
\|T_\ast(f_1,\dots,f_m)\|_{X^p}\lesssim \prod_{k=1}^m\left\| \sum_{j_k=1}^{M} \frac{\lambda_{k,j_k}\chi_{Q_{k,j_k}}}{\|\chi_{Q_{k,j_k}}\|_{X^{p_k}_k}}   \right\|_{X_k^{p_k}}.
\end{align*}

From the proof of \cite[Theorem 1.4]{Tan1} or \cite[Theorem 1.3]{WWX}, observe that
\begin{align*}
&\left(\frac{1}{|R_{j_1,\dots,j_m}|}\int_{R_{j_1,\dots,j_m}}
|T_\ast(a_{1,j_1},\dots,a_{m,j_m})|^q(x)dx\right)^{\frac{1}{q}}\\
&\qquad\qquad\lesssim \prod_{k=1}^m\inf_{z\in R_{j_1,\dots,j_m}}
\frac{\mathcal M(\chi_{Q_{k,j_l}})(z)^{\frac{n+N+1}{mn}}}{\|\chi_{Q_{k,j_k}}\|_{X^{p_k}_k}}.
\end{align*}
and that
\begin{align*}
|T_\ast(a_{1,j_1},\dots,a_{m,j_m})|\chi_{(R_{j_1,\dots,j_m})^c}\lesssim
\prod_{k=1}^m\frac{(\mathcal M(\chi_{Q_k,j_k}))^{\frac{n+N+1}{mn}}}{\|\chi_{Q_{k,j_k}}\|_{X^{p_k}_k}}.
\end{align*}
 The rest of the proof of Theorem \ref{s1th3} is identical to that of Theorem \ref{s1th1}. Thus, we have completed this theorem.
\end{proof}

At the end of this chapter, we give the proof of Theorem \ref{s1th4}.

\begin{proof}[Proof of Theorem \ref{s1th4}]
Let $1\le k\le m$ and fix arbitrary functions $f_k\in h_{fin}^{X_k^{p_k},q,N}(\mathbb R^{n})$.
By Remark \ref{localhx}, for any $1<q<\infty$,
$f_j$ admits an atomic decomposition:
\begin{equation*}
  f_k=\sum_{j_k=1}^{M}\lambda_{k,j_k}a_{k,j_k},
\end{equation*}
where  $\lambda_{k,j_k}\ge 0$ and 
$a_{k,j_k}$ are local $(X^{p_k}_k, q, N)$-atoms
and 
\begin{equation*}
  \left\| \sum_{j_k=1}^{M} \frac{\lambda_{k,j_k}\chi_{Q_{k,j_k}}}{\|\chi_{Q_{k,j_k}}\|_{X^{p_k}_k}}   \right\|_{X_k^{p_k}}
  \lesssim \|f_k\|_{{h}_{X_k^{p_k}}}.
\end{equation*}

To show this theorem, we only need to prove that
\begin{align*}
\|T_{\sigma}(f_1,\dots,f_m)\|_{X^p}\lesssim \prod_{k=1}^m\left\| \sum_{j_k=1}^{M} \frac{\lambda_{k,j_k}\chi_{Q_{k,j_k}}}{\|\chi_{Q_{k,j_k}}\|_{X^{p_k}_k}}   \right\|_{X_k^{p_k}}.
\end{align*}

For a. e. $x\in\mathbb R^n$ we rewrite
\begin{align*}
T_{\sigma}(f_1,\dots,f_m)(x)=\sum_{j_1}\cdots\sum_{j_m}\lambda_{1,j_1}\cdots
\lambda_{m,j_m}T_\sigma(a_{1,j_1},\dots,a_{m,j_m})(x).
\end{align*}

Fix atoms $a_{1,k_1},\ldots,a_{m,j_m}$
supported in cubes $Q_{1,j_1},\ldots,Q_{m,j_m}$ respectively.
Assume that $Q^\ast_{1,k_1}\cap\cdots\cap Q^\ast_{m,k_m}\neq\varnothing,$
otherwise there is nothing to prove. Without loss of generality,
assume that $Q_{1,j_1}$ has the
smallest size among all these cubes. Since
$Q^\ast_{1,j_1}\cap\cdots\cap Q^\ast_{m,j_m}\neq\varnothing,$
we can pick a cube $R_{j_1,\ldots,j_m}$ such that
\begin{align*}
Q^\ast_{1,j_1}\cap\cdots\cap Q^\ast_{m,j_m}
\subset R_{j_1,\ldots,j_m}\subset R^\ast_{j_1,\ldots,j_m}
\subset Q^{\ast\ast}_{1,j_1}\cap\cdots\cap Q^{\ast\ast}_{m,j_m}
\end{align*}
and
$$|Q_{1,j_1}|\le C|R_{j_1,\ldots,j_m}|.$$
We split $T_\sigma(f_1,\dots,f_m)$ into two terms as follows:
\begin{align*}
&|T_\sigma(f_1,\dots,f_m)|\\
=&\sum_{j_1}\cdots\sum_{j_m}\lambda_{1,j_1}\cdots
\lambda_{m,j_m}|T_\sigma(a_{1,j_1},\dots,a_{m,j_m})|\chi_{R_{j_1,\dots,j_m}}\\
&+\sum_{j_1}\cdots\sum_{j_m}\lambda_{1,j_1}\cdots
\lambda_{m,j_m}|T_\sigma(a_{1,j_1},\dots,a_{m,j_m})|\chi_{(R_{j_1,\dots,j_m})^c}\\
=&I+II.
\end{align*}

To prove the first term, by applying
Lemma \ref{l2.7}, the condition (\ref{holder})
and Assumption \ref{ass2.7}, we get that
\begin{align*}
&\|I\|_{X^p}\\
&=
\left\|\sum_{j_1}\cdots\sum_{j_m}\prod_{k=1}^m|\lambda_{k,j_k}|
|T_\sigma(a_{1,j_1},\ldots,a_{m,j_m})|
\chi_{R_{j_1,\dots,j_m}}
\right\|_{X^p}\\
&\lesssim
\left\|\sum_{j_1}\cdots\sum_{j_m}\prod_{k=1}^m|\lambda_{k,j_k}|
\left|\frac{1}{|R_{j_1,\ldots,j_m}|}\int_{R_{j_1,\ldots,j_m}}
T_\sigma(a_{1,j_1},\ldots,a_{m,j_m})(y)dy\right|
\chi_{R_{j_1,\ldots,j_m}}\right\|_{X^p}\\
&\lesssim \left\|\sum_{j_1}\cdots\sum_{j_m}\prod_{k=1}^m|\lambda_{k,j_k}|
\prod_{k=1}^m
\frac{1}{\|\chi_{Q_{k,j_j}}\|_{X_k^{p_k}}}
\prod_{k=1}^m\chi_{Q^{\ast\ast}_{k,j_k}}\right\|_{X^{p}}\\
&\lesssim \prod_{k=1}^m\left\|\sum_{j_k}
\frac{\lambda_{k,j_k}\chi_{Q^{\ast\ast}_{k,j_k}}}
{\|\chi_{Q_{k,j_k}}\|_{X^{p_k}_k}}\right\|_{X_k^{p_k}}\\
&\lesssim  \prod_{k=1}^m\left\| \sum_{j_k} \frac{\lambda_{k,j_k}\chi_{Q_{k,j_k}}}{\|\chi_{Q_{k,j_k}}\|_{X^{p_k}_k}}   \right\|_{X_k^{p_k}}.
\end{align*}

Let $A$ be a nonempty
subset of $\{1,\ldots,m\}$, and we denote the cardinality of
$A$ by $|A|$, then $1\le|A|\le m$. 
Now we estimate the term $II$. Repeating the same argument as in \cite[Theorem 1.1]{TZ}, we conclude that
\begin{align*}
|T(a_{1,j_1},\dots,a_{m,j_m})|\chi_{(R_{j_1,\dots,j_m})^c}\lesssim
\prod_{k=1}^m\frac{(\mathcal M(\chi_{Q_k,j_k}))^{\frac{n+\frac{M+1}{|A|}}{n}}}{\|\chi_{Q_{k,j_k}}\|_{X^{p_k}_k}}
\end{align*}  
for all $M\ge 1$.
Then by 
the condition (\ref{holder}), the fact that $M>mn(\frac{1}{\theta}-1)-1$ and Assumption \ref{ass2.7}, we get that
\begin{align*}
\|II\|_{X^p}
&\lesssim \prod_{k=1}^m\left\|\sum_{j_k}\lambda_{k,j_k}
\frac{\mathcal M^{(\theta)}(\chi_{Q_k,j_k})}
{\|\chi_{Q_{k,j_k}}\|_{X^{p_k}_k}}\right\|_{X_k^{p_k}}\\
&\lesssim  \prod_{k=1}^m\left\| \sum_{j_k=1}^{M} \frac{\lambda_{k,j_k}\chi_{Q_{k,j_k}}}{\|\chi_{Q_{k,j_k}}\|_{X^{p_k}_k}}   \right\|_{X_k^{p_k}}.
\end{align*}

The rest of the proof is identical to that of Theorem \ref{s1th1}. Hence, we complete the proof of Theorem
\ref{s1th4}.
\end{proof}

 \section{Proof of Theorem \ref{s1th2}}
 
To establish Theorem \ref{s1th2}, it suffices to prove the following weaker version. The remainder of the proof closely parallels that of the previous section.
 
\begin{theorem}\label{s4th1}\quad
Given an integer $m\ge 1$, let $0<p, p_1, \ldots, p_m<\infty$ and
let $X^p,$ $X^{p_1}_1,$ $\ldots,$ $X^{p_m}_m$ be ball quasi-Banach function spaces satisfying Assumption \ref{ass2.7} with $0<\theta<s<1$ and Assumption \ref{ass2.8} with the same $s$ in (\ref{2.7}) and let $X^p$ be the ball quasi-Banach function spaces satisfying Assumption \ref{ass2.7} with $0<\theta_p<s_p<1$ and Assumption \ref{ass2.8} with the same $s_p$ in (\ref{2.7}).
Let 
$T$ be a multilinear Calder\'on--Zygmund operator associated to the kernel $K$ that satisfies (\ref{s1e1}).
Suppose that the H\"older's inequality (\ref{holder}) holds true. Assume further that
the condition (\ref{s1c3}) holds true and that $X, X_1,\dots X_m$ have the absolutely continuous quasi-norm.
If $\theta\in [\frac{mn}{n+N+1}\vee \frac{mn}{N-\tilde N},1)$
and $\theta_p\in [\frac{n}{n+\widetilde N+1},1)$, then 
$$T: H_{X_1^{p_1}}\times \cdots \times H_{X_m^{p_m}}\rightarrow H_{X^p}.$$
\end{theorem}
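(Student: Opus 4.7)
The plan is to parallel the proof of Theorem \ref{s3th1}, but instead of bounding $\|T(f_1,\ldots,f_m)\|_{X^p}$ directly, to construct an explicit $(X^p,q,\widetilde N)$-atomic decomposition of $T(f_1,\ldots,f_m)$ and then invoke Theorem \ref{atom} (which produces the $H_{X^p}$-bound in view of the hypothesis $\theta_p\in[\frac{n}{n+\widetilde N+1},1)$). As before, Theorem \ref{finite} furnishes each $f_k\in H_{fin}^{X_k^{p_k},\infty,N}$ with a finite decomposition $f_k=\sum_{j_k}\lambda_{k,j_k}a_{k,j_k}$ whose atoms are supported on cubes $Q_{k,j_k}$ and carry $N$ vanishing moments, and multilinearity yields
\[
T(f_1,\ldots,f_m)=\sum_{j_1,\ldots,j_m}\lambda_{1,j_1}\cdots\lambda_{m,j_m}\,T(a_{1,j_1},\ldots,a_{m,j_m}).
\]
Let $R_{j_1,\ldots,j_m}$ denote the smallest of the cubes $Q^\ast_{1,j_1},\ldots,Q^\ast_{m,j_m}$, exactly as in the proof of Theorem \ref{s3th1}.

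The core step is a further annular decomposition of each $T(a_{1,j_1},\ldots,a_{m,j_m})$. I would set $S_0:=2R_{j_1,\ldots,j_m}$ and $S_i:=2^{i+1}R_{j_1,\ldots,j_m}\setminus 2^iR_{j_1,\ldots,j_m}$ for $i\geq 1$, and then form $b_{i,j_1,\ldots,j_m}$ by subtracting off the $L^2(S_i)$-projection of $T(a_{1,j_1},\ldots,a_{m,j_m})\chi_{S_i}$ onto polynomials of degree $\le\widetilde N$; this produces a function supported on $S_i$ with $\widetilde N$ vanishing moments. The polynomial corrections telescope across annuli thanks to the global vanishing moment condition (\ref{s1c3}), so $T(a_{1,j_1},\ldots,a_{m,j_m})=\sum_{i\geq 0}b_{i,j_1,\ldots,j_m}$. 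The $L^q$-size of each $b_{i,j_1,\ldots,j_m}$ is controlled, for $i=0$, by the bound from \cite[Lemma 3.5]{CMN} already used in Theorem \ref{s3th1}, and, for $i\geq 1$, by pairing the kernel estimate (\ref{s1e1}) with $|\alpha|=\widetilde N+1$ against the $N$ vanishing moments of the atom on the smallest cube $Q_{1,j_1}$; this yields a geometric decay of the form $2^{-i(n+\widetilde N+1)}$ after integration over $S_i$. After normalization, each $b_{i,j_1,\ldots,j_m}$ is a genuine $(X^p,q,\widetilde N)$-atom supported on $2^{i+1}R_{j_1,\ldots,j_m}$, with amplitude $c_{i,j_1,\ldots,j_m}$ of the form $2^{-i(n+\widetilde N+1)}\|\chi_{2^{i+1}R_{j_1,\ldots,j_m}}\|_{X^p}\prod_k[\inf_{z\in R_{j_1,\ldots,j_m}}\mathcal M(\chi_{Q_{k,j_k}})(z)]^{(n+N+1)/(mn)}/\|\chi_{Q_{k,j_k}}\|_{X_k^{p_k}}$.

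Applying Theorem \ref{atom} reduces the bound $\|T(f_1,\ldots,f_m)\|_{H_{X^p}}\lesssim\prod_k\|f_k\|_{H_{X_k^{p_k}}}$ to the estimate
\[
\Big\|\sum_{i,j_1,\ldots,j_m}\prod_{k=1}^m\lambda_{k,j_k}\,\frac{c_{i,j_1,\ldots,j_m}\,\chi_{2^{i+1}R_{j_1,\ldots,j_m}}}{\|\chi_{2^{i+1}R_{j_1,\ldots,j_m}}\|_{X^p}}\Big\|_{X^p}\lesssim\prod_{k=1}^m\|f_k\|_{H_{X_k^{p_k}}}.
\]
The sum over $i$ is handled by Assumption \ref{ass2.7} for $X^p$: pointwise one has $\chi_{2^{i+1}R_{j_1,\ldots,j_m}}\lesssim 2^{in/\theta_p}\mathcal M^{(\theta_p)}(\chi_{R_{j_1,\ldots,j_m}})$, and the condition $\theta_p\geq n/(n+\widetilde N+1)$ guarantees that the geometric factor $2^{-i(n+\widetilde N+1)}2^{in/\theta_p}$ is summable. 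After this collapse in $i$, the remaining sum over $(j_1,\ldots,j_m)$ is exactly of the type treated in the proof of Theorem \ref{s3th1}, so H\"older's inequality (\ref{holder}) together with Assumption \ref{ass2.7} for each $X_k^{p_k}$ (using $\theta\geq mn/(n+N+1)$) closes the estimate.

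The main obstacle is the $L^q$-decay estimate on the distant annuli $S_i$, $i\geq 1$: one must Taylor-expand $K(x,y_1,\ldots,y_m)$ to order $\widetilde N$ in $y_1$ around the center of the smallest cube $Q_{1,j_1}$, use the $N$ vanishing moments of $a_{1,j_1}$ to extract the remainder, and integrate against the remaining $m-1$ atoms to recover the claimed decay. The condition $\theta\geq mn/(N-\widetilde N)$ enters here: it ensures that the extra exponent of $\mathcal M(\chi_{Q_{k,j_k}})$ needed for the $j_k$-sums (before dominating each such factor by $\mathcal M^{(\theta)}(\chi_{Q_{k,j_k}})$ via Assumption \ref{ass2.7}) does not exceed what the kernel regularity $(n+N+1)-(n+\widetilde N+1)/m$ can provide, after the annular decay has been spent to sum over $i$. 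Once this $L^q$-estimate is secured, the assembly mirrors the proof of Theorem \ref{s3th1} without further surprises.
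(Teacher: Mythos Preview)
Your approach is genuinely different from the paper's and, as written, contains a real gap in the annular decay estimate.  The paper does \emph{not} build an atomic decomposition of $T(f_1,\ldots,f_m)$; instead it uses the nontangential maximal-function characterization $\|g\|_{H_{X^p}}\sim\|M_\phi g\|_{X^p}$ and reduces everything to a pointwise bound on $M_\phi T(a_{1,j_1},\ldots,a_{m,j_m})$ outside $Q_1^{**}$.  That bound is imported wholesale from \cite[Lemma~5.1]{CMN} and reads
\[
M_\phi T(a_1,\ldots,a_m)\lesssim \prod_{k}\frac{\mathcal M(\chi_{Q_k})^{\frac{n+N+1}{mn}}}{\|\chi_{Q_k}\|_{X_k^{p_k}}}
+\mathcal M(\chi_{Q_1})^{\frac{n+\widetilde N+1}{n}}\prod_{k}\inf_{z\in Q_1}\frac{\mathcal M(\chi_{Q_k})(z)^{\frac{N-\widetilde N}{mn}}}{\|\chi_{Q_k}\|_{X_k^{p_k}}}.
\]
The second term is the crucial one: the factor $\mathcal M(\chi_{Q_1})^{(n+\widetilde N+1)/n}$ is handled by Assumption~\ref{ass2.7} for $X^p$ with $\theta_p\ge n/(n+\widetilde N+1)$, and the remaining constant product is then pushed through H\"older and Assumption~\ref{ass2.7} for the $X_k^{p_k}$ with $\theta\ge mn/(N-\widetilde N)$.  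The point is that in \cite{CMN} this second term arises only after convolving with $\phi_t$ and invoking (\ref{s1c3}) in the large-$t$ regime; it is not a pointwise bound on $T(a_1,\ldots,a_m)$ itself.

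This is exactly where your argument breaks.  Taylor-expanding $K$ to order $\widetilde N$ in $y_1$ gives, for $x\in S_i$, a bound of the type
\[
|T(a_1,\ldots,a_m)(x)|\lesssim \ell(Q_1)^{\widetilde N+1}\int\frac{\prod_k|a_k(y_k)|}{(\sum_l|x-y_l|)^{mn+\widetilde N+1}}\,dy,
\]
and the denominator cannot be factored so as to produce your claimed amplitude $2^{-i(n+\widetilde N+1)}\prod_k[\inf_{z\in R}\mathcal M(\chi_{Q_k})(z)]^{(n+N+1)/(mn)}$.  The annuli $S_i$ around the \emph{smallest} cube may well intersect the \emph{larger} cubes $Q_k^\ast$, and on those intersections the only available pointwise control is the one from \cite[Lemma~3.6]{CMN}, namely $\prod_k\mathcal M(\chi_{Q_k})(x)^{(n+N+1)/(mn)}$, which is neither constant on $S_i$ nor of size $2^{-i(n+\widetilde N+1)}$.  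In short, $T(a_1,\ldots,a_m)$ is not a molecule centered at $R$ with radial decay, so the dyadic-annulus decomposition around $R$ does not yield atoms with the coefficients you wrote down.  The paper's route through $M_\phi$ and \cite[Lemma~5.1]{CMN} is precisely what manufactures the needed factorization, by trading the convolution against the global moments (\ref{s1c3}); without that mechanism you would have to find a substantially different decomposition of $T(a_1,\ldots,a_m)$ that respects the geometry of \emph{all} the cubes $Q_k$ simultaneously.
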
 
 
 \begin{proof}
 
 The proof of Theorem \ref{s4th1} is similar to the one of Theorem \ref{s3th1}.
For brevity, we only show the difference.
Here we will need to estimate the norm of $M_\phi \circ T$, 
where $M_\phi$ is the non-tangential maximal operator
\[
M_\phi f(x) = \sup_{0<t<\infty} \sup_{|y-x|<t} |\phi_t \ast f(y)|,
\]
where $\phi \in C_0^\infty$ and $\operatorname{supp}(\phi) \subset B(0,1)$. 
From \cite[Section 3.1]{SHYY}, by the condition that $X^p$ is a ball quasi-Banach function space satisfying Assumptions \ref{ass2.7}, we know that $H_{X^p}$ can be characterized via the non-tangential maximal function $M_\phi$ 
with the norm
\[
\|f\|_{H_{X^p}} \sim \|M_\phi f\|_{X^p}.
\]

Let $1\le k\le m$ and fix arbitrary functions $f_k\in H_{fin}^{X_k^{p_k},\infty,N}(\mathbb R^{n})$.
We only need to show that
\begin{align}\label{s-3}
\|M_\phi T(f_1,\dots,f_m)\|_{X^p}\lesssim \prod_{k=1}^m\left\| \sum_{j_k=1}^{M} \frac{\lambda_{k,j_k}\chi_{Q_{k,j_k}}}{\|\chi_{Q_{k,j_k}}\|_{X^{p_k}_k}}   \right\|_{X_k^{p_k}}.
\end{align}
 
Since that $M_\phi\circ T$ is multi-sublinear, we write
\begin{align*}
&M_\phi T(f_1,\dots,f_m)(x)\\
\le&\sum_{j_1}\cdots\sum_{j_m}\lambda_{1,j_1}\cdots
\lambda_{m,j_m}|M_\phi T(a_{1,j_1},\dots,a_{m,j_m})|\chi_{R^\ast_{j_1,\dots,j_m}}\\
&+\sum_{j_1}\cdots\sum_{j_m}\lambda_{1,j_1}\cdots
\lambda_{m,j_m}|M_\phi T(a_{1,j_1},\dots,a_{m,j_m})|\chi_{(R^\ast_{j_1,\dots,j_m})^c}\\
=:&I_1+II_1,
\end{align*}
where $R^\ast_{j_1,\dots,j_m}$ to be the smallest cube among $Q^{\ast\ast}_{1,j_1},\dots, Q^{\ast\ast}_{m,j_m}$. 
 
Observe that $M_\phi$ is controlled pointwise by the Hardy–Littlewood maximal operator.
Repeating a very similar argument as in the proof of Theorem \ref{s3th1},
we conclude that
\begin{align*}
\|I_1\|_{X^p}\lesssim  \prod_{k=1}^m\left\| \sum_{j_k=1}^{M} \frac{\lambda_{k,j_k}\chi_{Q_{k,j_k}}}{\|\chi_{Q_{k,j_k}}\|_{X^{p_k}_k}}   \right\|_{X_k^{p_k}}.
\end{align*}
 
Suppose that $Q_1$ is such that $\ell(Q_1) = \min\{\ell(Q_k) : 1 \leq k \leq m\}$. 
By \cite[Lemma 5.1]{CMN}, we can get that
for all $x \notin Q_1^{**}$, we have
\begin{align*}
&M_\phi T(a_1, \ldots, a_m)\\
&\lesssim \prod_{k=1}^m \frac{\mathcal M(\chi_{Q_k})^{\frac{n+N+1}{mn}}}{\|\chi_{Q_{k,j_k}}\|_{X^{p_k}_k}} 
+ {\mathcal M(\chi_{Q_1})^{\frac{n+\widetilde N+1}{n}}} 
\prod_{k=1}^m \inf_{z \in Q_1} \frac{\mathcal M(\chi_{Q_k})(z)^{\frac{N-\widetilde N}{mn}}}{\|\chi_{Q_{k,j_k}}\|_{X^{p_k}_k}} \\
&=:I_2+II_2.
\end{align*}

We only need to estimate the term $II_2$. 
The estimate of $I_2$ is nearly identical to the one of $II$ in Theorem \ref{s3th1}
because of $\theta\in [\frac{mn}{n+N+1},1)$.
Then by Assumption \ref{ass2.7}, the condition (\ref{holder}) and the fact that $\theta_p\in [\frac{n}{n+\widetilde N+1},1)$ and that $\theta\in [\frac{mn}{N-\tilde N},1)$, we get that
\begin{align*}
&\|II_2\|_{X^p}\\
&= \left\|\sum_{j_1}\cdots\sum_{j_m}\lambda_{1,j_1}\cdots\lambda_{m,j_m}
{\mathcal M(\chi_{Q_1})^{\frac{n+\widetilde N+1}{n}}}
\prod_{k=1}^m \inf_{z \in Q_1} \frac{\mathcal M(\chi_{Q_k})(z)^{\frac{N-\widetilde N}{mn}}}{\|\chi_{Q_{k,j_k}}\|_{X^{p_k}_k}} \right\|_{X^{p}}\\
&\lesssim \left\|\sum_{j_1}\cdots\sum_{j_m}\lambda_{1,j_1}\cdots\lambda_{m,j_m}
\chi_{R^\ast_{j_1,\ldots,j_m}} 
\prod_{k=1}^m \inf_{z \in R^\ast_{j_1,\ldots,j_m}} \frac{\mathcal M(\chi_{Q_k})(z)^{\frac{N-\widetilde N}{mn}}}{\|\chi_{Q_{k,j_k}}\|_{X^{p_k}_k}} \right\|_{X^{p}}\\
&\lesssim \left\|\sum_{j_1}\cdots\sum_{j_m}\lambda_{1,j_1}\cdots\lambda_{m,j_m}
\prod_{k=1}^m \frac{\mathcal M(\chi_{Q_k})^{\frac{N-\widetilde N}{mn}}}{\|\chi_{Q_{k,j_k}}\|_{X^{p_k}_k}} \right\|_{X^{p}}\\
&\lesssim \prod_{k=1}^m\left\|\sum_{j_k}\lambda_{k,j_k}\frac{(\mathcal M(\chi_{Q_k,j_k}))^{\frac{N-\widetilde N}{mn}}}
{\|\chi_{Q_{k,j_k}}\|_{X^{p_k}_k}}\right\|_{X_k^{p_k}}
\lesssim  \prod_{k=1}^m\left\| \sum_{j_k=1}^{M} \frac{\lambda_{k,j_k}\chi_{Q_{k,j_k}}}{\|\chi_{Q_{k,j_k}}\|_{X^{p_k}_k}}   \right\|_{X_k^{p_k}}.
\end{align*}

Therefore, we have completed
the proof of Theorem \ref{s4th1}.
\end{proof}

\section{Applications}
In this section, we apply the main theorems to  several concrete examples of ball quasi-Banach function spaces, namely, weighted Lebesgue spaces (Subsection \ref{s5s1}),
variable Lebesgue spaces (Subsection \ref{s5s2}), Morrey spaces (Subsection \ref{s5s3}), mixed-norm Lebesgue spaces (Subsection \ref{s5s4}), Lorentz spaces (Subsection \ref{s5s5}) and Orlicz spaces (Subsection \ref{s5s6}).
To our best knowledge, our results on the boundedness of these multilinear operators and multi-sublinear operators on the (local) Hardy--Morrey spaces, mixed-norm (local) Hardy spaces, (local) Hardy--Lorentz spaces and (local) Hardy--Orlicz spaces are completely new. Given the generality of the results, further applications are both expected and foreseeable.

\subsection{Weighted (local) Hardy spaces}\label{s5s1}
From \cite[Subsection 7.1]{SHYY} we know that $L^{p}_{\omega}(\mathbb R^{n})$ with $p\in(0,\infty)$ and $\omega\in A_{\infty}(\mathbb R^{n})$ is a ball quasi-Banach function space. Moreover, 
by \cite[Remark 2.4(b)]{WYY} we know that Assumption~\ref{ass2.7} holds true when $\theta,\ s\in(0,1]$, $\theta<s$ and $X:=L^{p}_{\omega}(\mathbb R^{n})$ with $p\in(\theta,\infty)$ and $\omega\in A_{p/\theta}(\mathbb R^{n})$.
By \cite[Remark 2.7(b)]{WYY}, for any \( s \in (0, \min \{1, p\}) \), \( w \in A_{p/s} (\mathbb{R}^n) \) and \( q \in (\max\{1, p\}, \infty] \) large enough such that \( w^{1 - (p/s)'} \in A_{(p/s)' / (q/s)'} (\mathbb{R}^n) \), it holds true that \( (p/s)' / (q/s)' > 1 \) and
\[
\left[ (X^{1/s})' \right]^{1/(q/s)'} = L_{w^{1 - (p/s)'}}^{(p/s)' / (q/s)'} (\mathbb{R}^n),
\]
which, together with the boundedness of \( M \) on the weighted Lebesgue space, further implies that Assumption~\ref{ass2.8} holds true.

In this case, we have $X=L^{1}_{\bar \omega}(\mathbb R^{n})$, $X_1=L^{1}_{\omega_1}(\mathbb R^{n})$, $\dots,$
$X_m=L^{1}_{\omega_m}(\mathbb R^{n})$, where $\bar\omega=\prod_{k=1}^m\omega_k^{\frac{p}{p_k}}$ and
$$\frac{1}{p}=\sum_{k=1}^m\frac{1}{p_k},$$
for $k=1,\dots, m.$
Then by classical H\"older's inequality, we can get that for all $f_k\in L^{p_k}_{\omega_k}$, 
$$\left\|\prod_{k=1}^mf_k\right\|_{L^{p}_{\bar\omega}}\le \prod_{k=1}^m\|f_k\|_{L^{p_k}_{\omega_k}}.$$
From this and by Theorems \ref{s1th1}, \ref{s1th2}, \ref{s1th3} and \ref{s1th4}, it follows the
following conclusions on weighted (local) Hardy spaces.

\begin{theorem}\label{th51}\quad
Given an integer $m\ge 1$, $p_1,\dots,p_m\in (0,\infty)$, 
and $q_{w_k}$ is the critical index of ${\omega_k}\in A_\infty$, $1\le k\le m$, let 
$T$ be an multilinear Calder\'on--Zygmund operator associated to the kernel $K$ that satisfies (\ref{s1e1}).
If $\frac{mnq_{\omega_k}}{n+N+1}<p_k<\infty$, then $T$ extends to a bounded operator from
$H^{p_1}_{\omega_1}\times\cdots\times H^{p_m}_{\omega_m}$ into $L^{p}_{\bar \omega}$,
and $$
\frac{1}{p}=\frac{1}{p_1}+\cdots+\frac{1}{p_m}.
$$
\end{theorem}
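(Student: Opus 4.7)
The plan is to specialize Theorem \ref{s1th1} to the setting $X_k^{p_k} := L^{p_k}_{\omega_k}$ for $1 \le k \le m$ and $X^p := L^p_{\bar\omega}$, noting that under this identification the abstract Hardy space $H_{X_k^{p_k}}$ coincides with the classical weighted Hardy space $H^{p_k}_{\omega_k}$. The proof therefore reduces to verifying that all the abstract hypotheses of Theorem \ref{s1th1} hold in this concrete setting, and then invoking that theorem.

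First, I would record that $L^{p_k}_{\omega_k}$ and $L^p_{\bar\omega}$ are ball quasi-Banach function spaces satisfying Assumptions \ref{ass2.7} and \ref{ass2.8}, by appealing to \cite[Subsection 7.1]{SHYY} and \cite[Remarks 2.4 and 2.7]{WYY} as summarised at the beginning of Subsection \ref{s5s1}. For Assumption \ref{ass2.7} applied to $L^{p_k}_{\omega_k}$ with parameters $0<\theta<s<1$, the key analytic input is $\omega_k \in A_{p_k/\theta}$, which in terms of the critical index becomes the strict inequality $\theta < p_k/q_{\omega_k}$. For $L^p_{\bar\omega}$, the required $\theta_p\in(0,1)$ can be chosen freely small, since $\bar\omega$ is a finite product of $A_\infty$ weights and hence again lies in $A_\infty$.

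Next, I would verify the multilinear Hölder inequality \eqref{holder}. For $f_k \in L^{p_k}_{\omega_k}$, writing
\[
\prod_{k=1}^m|f_k(x)|^p\,\bar\omega(x) = \prod_{k=1}^m \bigl(|f_k(x)|^{p_k}\omega_k(x)\bigr)^{p/p_k}
\]
and applying the classical Hölder inequality with exponents $p_k/p$ (whose reciprocals sum to one, by the definition of $p$) yields $\|\prod_{k=1}^m f_k\|_{L^p_{\bar\omega}} \le \prod_{k=1}^m \|f_k\|_{L^{p_k}_{\omega_k}}$, which is exactly \eqref{holder}.

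Finally, I would confirm the kernel-regularity threshold. The hypothesis $p_k > \frac{mn\,q_{\omega_k}}{n+N+1}$ for every $k$ is equivalent to $\frac{mn}{n+N+1} < \frac{p_k}{q_{\omega_k}}$, so one may pick $\theta := \frac{mn}{n+N+1}$ and then an $s\in\bigl(\theta,\min\{1,\min_k p_k/q_{\omega_k}\}\bigr)$; with these choices, Assumptions \ref{ass2.7} and \ref{ass2.8} simultaneously hold for all $L^{p_k}_{\omega_k}$ with the same $\theta,s$. Invoking Theorem \ref{s1th1} then delivers $T: H^{p_1}_{\omega_1}\times\cdots\times H^{p_m}_{\omega_m} \to L^p_{\bar\omega}$. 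The only mild obstacle is the simultaneous compatibility of the two thresholds on $\theta$ arising from the kernel smoothness (via $N$) and from each Muckenhoupt exponent $q_{\omega_k}$, but the lower bound imposed on $p_k$ is tailored exactly to guarantee this compatibility, so no additional work is required.
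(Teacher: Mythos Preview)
Your proposal is correct and follows essentially the same route as the paper: Subsection~\ref{s5s1} verifies that the weighted Lebesgue spaces $L^{p_k}_{\omega_k}$ and $L^p_{\bar\omega}$ are ball quasi-Banach function spaces satisfying Assumptions~\ref{ass2.7} and~\ref{ass2.8} (citing \cite{SHYY,WYY}), checks the H\"older inequality~\eqref{holder} via the classical H\"older inequality, and then simply invokes Theorem~\ref{s1th1}. Your parameter bookkeeping for $\theta$ and $s$ is slightly more explicit than the paper's, but the argument is the same. One small imprecision: the sentence ``$\bar\omega$ is a finite product of $A_\infty$ weights and hence again lies in $A_\infty$'' is not literally true for arbitrary products; what makes it work here is that $\bar\omega=\prod_k\omega_k^{p/p_k}$ with exponents $p/p_k$ summing to $1$, and then two applications of H\"older show $\bar\omega\in A_q$ whenever all $\omega_k\in A_q$.
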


If we impose the condition on $N$ such that
$$
N \geq \max \left\{\left\lfloor m n\left(\frac{q_{w_k}}{p_k}-1\right)\right\rfloor_{+}, 1 \leq k \leq m\right\}+(m-1) n,
$$
then by Theorem \ref{th51} for any $0<p_1,\dots,p_m<\infty$, $T$ is a bounded operator from
$H^{p_1}_{\omega_1}\times\cdots\times H^{p_m}_{\omega_m}$ into $L^{p}_{\bar \omega}$.

\begin{theorem}\label{th52}\quad
Let all the notation be the same as in Theorem \ref{th51}.
 Assume further that
\begin{align}\label{s1c3}
\int_{(\mathbb R^n)^m}x^\alpha T(a_1,a_2,\cdots,a_m)(x)dx=0,
\end{align}
for all $|\alpha|\le \widetilde N$ and $(L_{\omega_k}^{p_k},q,N)$-atoms $a_k$,
$k=1,2,\cdots,m$.
If $\frac{mnq_{\omega_k}}{n+N+1}\vee\frac{mnq_{\omega_k}}{N-\widetilde N}<p_k<\infty$
and $\frac{nq_{\omega_k}}{n+\widetilde N+1}<p<\infty$, then $T$ extends to a bounded operator from
$H^{p_1}_{\omega_1}\times\cdots\times H^{p_m}_{\omega_m}$ into $H^{p}_{\bar \omega}$.
\end{theorem}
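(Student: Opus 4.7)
The plan is to derive Theorem \ref{th52} as a direct specialization of the general Theorem \ref{s1th2}, applied with $X^{p_k}_k := L^{p_k}_{\omega_k}(\mathbb R^n)$ for $k=1,\ldots,m$ and $X^p := L^p_{\bar\omega}(\mathbb R^n)$, where $\bar\omega := \prod_{k=1}^m \omega_k^{p/p_k}$ and $1/p = \sum_{k=1}^m 1/p_k$. The vanishing moment condition \eqref{s1c3} is already assumed as a hypothesis, and the H\"older inequality \eqref{holder} reduces to the classical weighted H\"older inequality already recorded in Subsection \ref{s5s1}. Thus the whole task boils down to selecting parameters $(\theta,s)$ and $(\theta_p,s_p)$ for which Assumptions \ref{ass2.7} and \ref{ass2.8} are simultaneously valid for every $L^{p_k}_{\omega_k}$ and for $L^p_{\bar\omega}$, and which satisfy the thresholds $\theta \ge \frac{mn}{n+N+1} \vee \frac{mn}{N-\widetilde N}$ and $\theta_p \ge \frac{n}{n+\widetilde N+1}$ imposed by Theorem \ref{s1th2}.

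First I would handle the factor spaces. From \cite[Remark 2.4(b)]{WYY} and the definition of the critical index, Assumption \ref{ass2.7} holds for $L^{p_k}_{\omega_k}$ with parameter $\theta$ whenever $\theta < p_k/q_{\omega_k}$; Assumption \ref{ass2.8} is then available for any $s \in (\theta, \min\{1, p_k/q_{\omega_k}\})$ via exactly the computation already used in the proof of Theorem \ref{th51}. The hypothesis $p_k > \frac{mn\,q_{\omega_k}}{n+N+1} \vee \frac{mn\,q_{\omega_k}}{N-\widetilde N}$ is precisely what is needed to make the interval $[\frac{mn}{n+N+1} \vee \frac{mn}{N-\widetilde N},\, p_k/q_{\omega_k})$ nonempty, so one can fix a single $\theta \in (0,1)$ together with a matching $s$ valid for every $k$.

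Next I would carry out the analogous analysis for the target space $L^p_{\bar\omega}$. The multilinear weighted H\"older inequality applied to $\bar\omega = \prod_{k=1}^m \omega_k^{p/p_k}$ yields $\bar\omega \in A_{p/r}$ whenever $\omega_k \in A_{p_k/r}$ for every $k$, and consequently $q_{\bar\omega} \le \max_{1\le k\le m} q_{\omega_k}$. Therefore the assumption $p > \frac{n\,q_{\omega_k}}{n+\widetilde N+1}$ for every $k$ forces $p/q_{\bar\omega} > \frac{n}{n+\widetilde N+1}$, so one can select $\theta_p \in [\frac{n}{n+\widetilde N+1},\, p/q_{\bar\omega})$ and $s_p \in (\theta_p, \min\{1, p/q_{\bar\omega}\})$ satisfying both assumptions for $L^p_{\bar\omega}$. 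With all parameters fixed, every hypothesis of Theorem \ref{s1th2} is in place and the boundedness $T\colon H^{p_1}_{\omega_1}\times\cdots\times H^{p_m}_{\omega_m} \to H^{p}_{\bar\omega}$ follows at once.

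The main (and essentially only) point I expect to require some care is the comparison $q_{\bar\omega} \le \max_k q_{\omega_k}$, which is the reason the hypothesis on $p$ can be formulated purely in terms of the individual critical indices $q_{\omega_k}$ rather than in terms of $q_{\bar\omega}$. This is a standard product fact for Muckenhoupt weights combined with the self-improvement property of the $A_q$ classes, and so it should occupy no more than a few lines; the rest of the argument is a mechanical application of Theorem \ref{s1th2} following the template already established in the proof of Theorem \ref{th51}.
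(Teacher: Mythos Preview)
Your approach --- specializing Theorem \ref{s1th2} with $X_k^{p_k}=L^{p_k}_{\omega_k}$ and $X^p=L^p_{\bar\omega}$, verifying Assumptions \ref{ass2.7} and \ref{ass2.8} via the known weighted facts, and invoking the classical H\"older inequality --- is exactly the route the paper takes; Subsection \ref{s5s1} simply records those facts and then states Theorem \ref{th52} as an immediate consequence without further detail. You are in fact more careful than the paper in isolating the one nontrivial point, namely the control of $q_{\bar\omega}$ in terms of the individual $q_{\omega_k}$.

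That said, there is a slip in precisely that step. The intermediate claim ``$\bar\omega\in A_{p/r}$ whenever $\omega_k\in A_{p_k/r}$ for every $k$'' is false in general: take $m=2$, $p_1=p_2=2$, $r=1$, and $\omega_1=\omega_2=\omega\in A_2\setminus A_1$; then $p=1$, $\bar\omega=\omega$, and the claim would force $\omega\in A_1$. What \emph{is} true, and what suffices for your purposes, is the log-convexity of the $A_q$ classes: if $\omega_1,\dots,\omega_m\in A_q$ and $\sum_{k}p/p_k=1$, then $\prod_k\omega_k^{p/p_k}\in A_q$, with $[\bar\omega]_{A_q}\le\prod_k[\omega_k]_{A_q}^{p/p_k}$ (apply H\"older with exponents $p_k/p$ separately to each of the two averages in the $A_q$ condition). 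Since every $\omega_k$ lies in $A_q$ for all $q>\max_j q_{\omega_j}$, this gives $q_{\bar\omega}\le\max_k q_{\omega_k}$, which is exactly the inequality you need; the remainder of your argument then goes through unchanged.
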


Similarly, if we impose the condition on $\widetilde N, N$ in Theorem \ref{th52} such that
$$
\widetilde N\geq n\left(\frac{q_{\bar w}}{p}-1\right)
$$
and
$$
N \geq \widetilde N+\max \left\{\left\lfloor mn\left(\frac{q_{w_k}}{p_k}-1\right)\right\rfloor_{+}, 1 \leq k \leq m\right\}+mn,
$$
then for any $0<p_1,\dots,p_m<\infty$, we get that $T$ is a bounded operator from
$H^{p_1}_{\omega_1}\times\cdots\times H^{p_m}_{\omega_m}$ into $H^{p}_{\bar \omega}$.
The descriptions of the ranges of indices in all the theorems below are similar; therefore, we will not elaborate on them individually.
We also remark that Cruz-Uribe et al. have established the above boundedness of the multilinear Calder\'on--Zygmund operators on weighted Hardy spaces in \cite[Theorems 1.1 and 1.2]{CMN}. For more details, also see \cite{XY}. Meanwhile, as a corollary of Theorem \ref{s1th3}  we also get the following 
weighted Hardy spaces boundedness for the maximal operators of multilinear Calder\'on--Zygmund operators, which also have proved by Wen et al. in \cite{WWX}.

\begin{theorem}\label{th53}\quad
Let all the notation be as in Theorem \ref{th51}
and let $T$ be a multilinear Calder\'on--Zygmund operator associated to the kernel $K$ that satisfies (\ref{s1e1}).
If $\frac{mnq_{\omega_k}}{n+N+1}<p_k<\infty$, then $T_\ast$ extends to a bounded operator from
$H^{p_1}_{\omega_1}\times\cdots\times H^{p_m}_{\omega_m}$ into $L^{p}_{\bar \omega}$.
\end{theorem}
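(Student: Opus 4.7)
The plan is to deduce Theorem \ref{th53} by applying the abstract Theorem \ref{s1th3} with the ball quasi-Banach function spaces specialized to weighted Lebesgue spaces. Specifically, I would set $X_k := L^1_{\omega_k}$ so that $X_k^{p_k} = L^{p_k}_{\omega_k}$ for $k = 1, \dots, m$, and $X := L^1_{\bar\omega}$ with $\bar\omega = \prod_{k=1}^m \omega_k^{p/p_k}$, so that $X^p = L^p_{\bar\omega}$. Subsection \ref{s5s1} already records that such spaces fit the ball quasi-Banach function space framework. Thus the task reduces to verifying Assumption \ref{ass2.7}, Assumption \ref{ass2.8}, and the H\"older inequality \eqref{holder} with parameters compatible with the range $\theta \in [\frac{mn}{n+N+1},1)$ required by Theorem \ref{s1th3}, and then invoking that theorem.

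First, I would verify Assumption \ref{ass2.7} for each $X_k^{p_k}$. The hypothesis $p_k > \frac{mn q_{\omega_k}}{n+N+1}$ permits the choice of a single $\theta \in [\frac{mn}{n+N+1},1)$ with $p_k/\theta > q_{\omega_k}$ for every $k$, i.e.\ $\omega_k \in A_{p_k/\theta}$. Choosing $s \in (\theta,1)$ with $p_k > s$, the vector-valued Fefferman--Stein inequality applied to $|f_j|^\theta$ on $L^{p_k/\theta}_{\omega_k}$ gives \eqref{2.7}. The analogous verification for $X^p$ is immediate from $q_{\bar\omega} \le \max_k q_{\omega_k}$ (by the usual $A_\infty$ product estimate) and possibly shrinking $\theta_p$, $s_p$. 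Assumption \ref{ass2.8} for each $X_k^{p_k}$ is exactly the statement, recorded in Subsection \ref{s5s1}, that the duality identification
\begin{align*}
\bigl[(X_k^{p_k/s})'\bigr]^{1/(q/s)'} = L^{(p_k/s)'/(q/s)'}_{\omega_k^{1-(p_k/s)'}}
\end{align*}
combined with the boundedness of $\mathcal{M}$ on weighted Lebesgue spaces yields the required weighted maximal bound, and the same argument applies to $X^p$.

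For the H\"older inequality, writing $1/p = \sum_k 1/p_k$ and $\bar\omega = \prod_k \omega_k^{p/p_k}$, the classical weighted H\"older inequality furnishes
\begin{align*}
\left\|\prod_{k=1}^m f_k\right\|_{L^p_{\bar\omega}} = \left(\int \prod_{k=1}^m |f_k|^p \omega_k^{p/p_k}\,dx\right)^{1/p} \le \prod_{k=1}^m \|f_k\|_{L^{p_k}_{\omega_k}},
\end{align*}
which is \eqref{holder} in this setting. With all three hypotheses in place, Theorem \ref{s1th3} applied to $T_\ast$ gives precisely the boundedness $T_\ast : H^{p_1}_{\omega_1} \times \cdots \times H^{p_m}_{\omega_m} \to L^p_{\bar\omega}$.

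I do not expect a serious obstacle: the heavy lifting resides in the abstract Theorem \ref{s1th3}, and an essentially identical specialization has already been carried out for Theorem \ref{th51}. The only delicate point is the simultaneous choice of $\theta$ and $s$ so that $\frac{mn}{n+N+1} \le \theta < s < 1$ and $\omega_k \in A_{p_k/\theta}$ for every $k$; this is exactly what the quantitative assumption $\frac{mn q_{\omega_k}}{n+N+1} < p_k$ is designed to guarantee, so the bookkeeping should go through without surprise.
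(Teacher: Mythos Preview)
Your proposal is correct and follows exactly the paper's approach: Theorem~\ref{th53} is obtained by specializing Theorem~\ref{s1th3} to the weighted Lebesgue setting, with the verifications of Assumptions~\ref{ass2.7}, \ref{ass2.8} and the H\"older inequality~\eqref{holder} recorded in Subsection~\ref{s5s1}. In fact the paper gives no separate argument for Theorem~\ref{th53} beyond the sentence ``From this and by Theorems~\ref{s1th1}, \ref{s1th2}, \ref{s1th3} and \ref{s1th4}, it follows the following conclusions on weighted (local) Hardy spaces,'' so your write-up is simply a more detailed rendering of the same deduction.
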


Denote $h_\omega^{r}$ the weighted Hardy space as in Remark \ref{local} with $X$ replaced by $L_\omega^{r}$ for any $0<r<\infty$. The following result is on the boundedness of multilinear pseudo-differential operators.

\begin{theorem}\label{th51-1}\quad
Given an integer $m\ge 1$, $p_1,\dots,p_m\in (0,\infty)$ such that $
\frac{1}{p}=\frac{1}{p_1}+\cdots+\frac{1}{p_m}
$ and ${\omega_k}\in A_\infty$, $1\le k\le m$, let 
$T_\sigma$ be a multilinear pseudo-differential operator with the symbol $\sigma\in MB_{1,0}^0$.
If $0<p_k<\infty$, then $T_\sigma$ extends to a bounded operator from
$h^{p_1}_{\omega_1}\times\cdots\times h^{p_m}_{\omega_m}$ into $L^{p}_{\bar \omega}$.
\end{theorem}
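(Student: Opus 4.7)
The strategy is to invoke Theorem \ref{s1th4} directly, with the identification $X^{p_k}_k := L^{p_k}_{\omega_k}$ for $1\le k\le m$ and $X^p := L^{p}_{\bar\omega}$, where $\bar\omega := \prod_{k=1}^m \omega_k^{p/p_k}$. All the work then reduces to checking that weighted Lebesgue spaces verify the three abstract hypotheses of Theorem \ref{s1th4}; once those are in hand, the conclusion is immediate upon recognising that $h_{L^{p_k}_{\omega_k}}$ is exactly the weighted local Hardy space $h^{p_k}_{\omega_k}$.

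For the ball quasi-Banach function space structure, I would cite \cite[Subsection~7.1]{SHYY}, already used at the start of Subsection~\ref{s5s1}. To verify Assumption~\ref{ass2.7}, since each $\omega_k$ lies in $A_\infty$ with finite critical index $q_{\omega_k}$, I would pick $\theta\in(0,\min_k p_k/q_{\omega_k})$; then $\omega_k\in A_{p_k/\theta}$ for every $k$, and \cite[Remark~2.4(b)]{WYY} delivers the vector-valued maximal inequality \eqref{2.7} for any $s\in(\theta,\min\{1,p_1,\dots,p_m\})$. The product weight $\bar\omega$ also lies in $A_\infty$, so the same choice (further shrinking $\theta$ if necessary to accommodate $p/q_{\bar\omega}$) works for $X^p$ as well. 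Assumption~\ref{ass2.8} follows exactly as outlined in Subsection~\ref{s5s1}: the $(1/s)$-convexification's associate space is again a weighted Lebesgue space, on which the powered Hardy--Littlewood maximal operator is bounded by the classical weighted theory.

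The H\"older inequality \eqref{holder} is routine: since $1/p=\sum_k 1/p_k$, the classical H\"older inequality with exponents $p_k/p$ yields
\begin{align*}
\left\|\prod_{k=1}^{m} f_k\right\|_{L^{p}_{\bar\omega}}^{p}
= \int \prod_{k=1}^{m}\bigl(|f_k|^{p}\, \omega_k^{p/p_k}\bigr)\,dx
\le \prod_{k=1}^{m}\|f_k\|_{L^{p_k}_{\omega_k}}^{p}.
\end{align*}
With all three hypotheses verified, Theorem~\ref{s1th4} gives the desired mapping property. The only mildly subtle point is coordinating the parameters: $\theta$ must be chosen small enough to serve every weight simultaneously, which is possible because $A_\infty = \bigcup_q A_q$ and there are only finitely many weights involved. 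The moment-order parameter appearing in the proof of Theorem~\ref{s1th4} is free and can be taken arbitrarily large, so it imposes no upper constraint on $\theta$ in the pseudo-differential setting, which is why the statement places no lower bound on the $p_k$.
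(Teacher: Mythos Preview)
Your proposal is correct and follows essentially the same approach as the paper: Section~\ref{s5s1} verifies that weighted Lebesgue spaces satisfy the ball quasi-Banach structure, Assumptions~\ref{ass2.7} and~\ref{ass2.8}, and the H\"older inequality~\eqref{holder} via exactly the references you cite, and then deduces Theorem~\ref{th51-1} as a direct corollary of Theorem~\ref{s1th4}. Your added remarks on coordinating the choice of $\theta$ across the finitely many weights and on the freedom of the moment parameter $M$ in the pseudo-differential setting are accurate and slightly more explicit than the paper itself.
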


\subsection{Variable (local) Hardy spaces}\label{s5s2}
The variable exponent function spaces,
such as the variable Lebesgue spaces and the variable
Sobolev spaces, were studied by a substantial number
of researchers (see, for instance, \cite{CFMP,KR}).
For any Lebesgue measurable function $p(\cdot):
\mathbb R^n\rightarrow (0,\infty]$ and for any
measurable subset $E\subset \mathbb{R}^n$, we denote
$p^-(E)= \inf_{x\in E}p(x)$ and $p^+(E)= \sup_{x\in E}p(x).$
Especially, we denote $p^-=p^{-}(\mathbb{R}^n)$ and $p^+=p^{+}(\mathbb{R}^n)$.
Let $p(\cdot)$: $\mathbb{R}^n\rightarrow(0,\infty)$ be a measurable
function with $0<p^-\leq p^+ <\infty$ and $\mathcal{P}^0$
be the set of all these $p(\cdot)$.
Let $\mathcal{P}$ denote the set of all measurable functions
$p(\cdot):\mathbb{R}^n \rightarrow[1,\infty) $ such that
$1<p^-\leq p^+ <\infty.$

\begin{definition}\label{s1de1}\quad
Let $p(\cdot):\mathbb R^n\rightarrow (0,\infty]$
be a Lebesgue measurable function.
The {\it variable Lebesgue space} $L^{p(\cdot)}$ consisits of all
Lebesgue measurable functions $f$, for which the quantity
$\int_{\mathbb{R}^n}|\varepsilon f(x)|^{p(x)}dx$ is finite for some
$\varepsilon>0$ and
$$\|f\|_{L^{p(\cdot)}}=\inf{\left\{\lambda>0: \int_{\mathbb{R}^n}\left(\frac{|f(x)|}{\lambda}\right)^{p(x)}dx\leq 1 \right\}}.$$
\end{definition}
As a special case of the theory of Nakano and Luxemberg, we see that $L^{p(\cdot)}$
is a quasi-normed space. Especially, when $p^-\geq1$, $L^{p(\cdot)}$ is a Banach space. In the study of variable exponent function spaces it is common
to assume that the exponent function $p(\cdot)$ satisfies $LH$
condition.
We say that $p(\cdot)\in LH$, if $p(\cdot)$ satisfies

 $$|p(x)-p(y)|\leq \frac{C}{-\log(|x-y|)} ,\quad |x-y| \leq 1/2$$
and
 $$|p(x)-p(y)|\leq \frac{C}{\log(|x|+e)} ,\quad |y|\geq |x|.$$

 In fact, from \cite[Subsection 7.4]{SHYY} we know that whenever $p(\cdot)\in\mathcal{P}^{0}$, $L^{p(\cdot)}(\mathbb R^{n})$ is a ball quasi-Banach function space. Let $X:=L^{p(\cdot)}(\mathbb R^{n})$ with $p(\cdot)\in LH$. By \cite[Remark 2.4 (g)]{WYY}, we know that Assumption~\ref{ass2.7} holds true when $s\in(0,1]$ and $\theta\in(0,\min\{s,p_{-}\})$. Besides, let $X:=L^{p(\cdot)}(\mathbb R^{n})$ with $p(\cdot)\in LH$ and $0<p^{-}\leq p^{+}<\infty$. From \cite[Remark 2.7 (f)]{WYY}, Assumption~\ref{ass2.8} holds true when $s\in(0,\min\{1,p^{-}\})$ and $q\in(\max\{1,p^{+}\},\infty]$. Moreover,
the following generalized H\"{o}lder inequality on variable Lebesgue spaces
can be found in in \cite{CF, TLZ}.

\begin{proposition}
Given exponent function $p_k(\cdot)\in \mathcal{P}^0,$ define
$p(\cdot)\in \mathcal{P}^0$ by
$$\frac{1}{p(x)}=\sum_{k=1}^m\frac{1}{p_k(x)},$$
where $k=1,\dots, m.$
Then for all $f_k\in L^{p_k(\cdot)}(\mathbb R^{n})$ we have
$$\left\|\prod_{k=1}^mf_k\right\|_{L^{p(\cdot)}}\lesssim \prod_{k=1}^m\|f_k\|_{L^{p_k(\cdot)}}.$$
\end{proposition}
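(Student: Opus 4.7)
The plan is to prove the generalized H\"older inequality by induction on $m$, reducing everything to the base case $m=2$, which I will establish at the level of the modular (Luxemburg) functional by means of a pointwise Young inequality with variable exponents.

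For the base case $m=2$, fix nonzero $f_1\in L^{p_1(\cdot)}$ and $f_2\in L^{p_2(\cdot)}$. By the homogeneity of the Luxemburg quasi-norm I may assume, after rescaling, that $\|f_1\|_{L^{p_1(\cdot)}}=\|f_2\|_{L^{p_2(\cdot)}}=1$, which, via the standard modular-norm comparison, forces
$$\rho_{p_k}(f_k):=\int_{\mathbb R^n}|f_k(x)|^{p_k(x)}\,dx\leq 1,\qquad k=1,2.$$
Since $\frac{1}{p(x)}=\frac{1}{p_1(x)}+\frac{1}{p_2(x)}$, the ratios $r_k(x):=p_k(x)/p(x)$ lie in $[1,\infty)$ and satisfy $\frac{1}{r_1(x)}+\frac{1}{r_2(x)}=1$ pointwise. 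Applying the classical (pointwise) Young inequality with $a=|f_1(x)|^{p(x)}$ and $b=|f_2(x)|^{p(x)}$ yields
$$|f_1(x)f_2(x)|^{p(x)}\leq \frac{|f_1(x)|^{p_1(x)}}{r_1(x)}+\frac{|f_2(x)|^{p_2(x)}}{r_2(x)}\leq |f_1(x)|^{p_1(x)}+|f_2(x)|^{p_2(x)}.$$
Integrating gives $\rho_p(f_1 f_2)\leq 2$, and the modular-to-norm passage (valid for any $p(\cdot)\in\mathcal{P}^0$) yields $\|f_1 f_2\|_{L^{p(\cdot)}}\leq 2^{1/p^-}$, which, upon unnormalizing, is the desired bound with implicit constant $C=2^{1/p^-}$.

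For the inductive step from $m$ to $m+1$, define $q(\cdot)\in\mathcal{P}^0$ by $\frac{1}{q(x)}=\sum_{k=1}^{m}\frac{1}{p_k(x)}$, so that $\frac{1}{p(x)}=\frac{1}{q(x)}+\frac{1}{p_{m+1}(x)}$. Applying the $m=2$ case to the pair $\bigl(\prod_{k=1}^{m}f_k,f_{m+1}\bigr)$ gives
$$\left\|\prod_{k=1}^{m+1}f_k\right\|_{L^{p(\cdot)}}\lesssim \left\|\prod_{k=1}^{m}f_k\right\|_{L^{q(\cdot)}}\|f_{m+1}\|_{L^{p_{m+1}(\cdot)}},$$
and the inductive hypothesis bounds the first factor by $\prod_{k=1}^m\|f_k\|_{L^{p_k(\cdot)}}$, closing the induction.

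The main obstacle I anticipate is the modular-to-norm comparison in the quasi-Banach regime $p^-<1$: one has to use the correct monotonicity of $\lambda\mapsto \rho_p(f/\lambda)$ and keep track of whether $\rho_p$ exceeds $1$ in order to extract a bound of the form $\|f\|_{L^{p(\cdot)}}\leq \rho_p(f)^{1/p^-}$, and to verify that the resulting constant $2^{1/p^-}$ is finite (which uses $p^->0$ from $p(\cdot)\in\mathcal{P}^0$). Once this comparison is in place, the pointwise Young step and the induction are routine, and no further regularity (such as the $LH$ condition) on the exponents is required.
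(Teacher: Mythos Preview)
Your argument is correct and is precisely the standard proof of the generalized H\"older inequality in variable Lebesgue spaces: normalize, apply the pointwise Young inequality with exponents $r_k(x)=p_k(x)/p(x)$, pass from the modular bound $\rho_p\le 2$ back to the quasi-norm via $\|f\|_{L^{p(\cdot)}}\le \rho_p(f)^{1/p^-}$ when $\rho_p(f)\ge 1$, and induct. The paper does not supply its own proof of this proposition but simply cites \cite{CF,TLZ}, where exactly this argument appears; your write-up is thus a faithful and self-contained version of what the citations contain, and the caveat you flag about the modular--norm comparison in the regime $p^-<1$ is handled correctly by the inequality $\lambda^{-p(x)/p^-}\le \lambda^{-1}$ for $\lambda\ge 1$.
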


Therefore, Theorems~\ref{s1th1},\ \ref{s1th2}, \ref{s1th3} and \ref{s1th4} hold true in the variable exponents settings, where $X=L^{q(\cdot)}(\mathbb R^{n})$, $X_1=L^{q_1(\cdot)}(\mathbb R^{n})$, $\dots,$
$X_m=L^{q_m(\cdot)}(\mathbb R^{n})$ and $p=p_1=\cdots=p_m=1$. 
Denote $H^{q(\cdot)}(\mathbb R^{n})$ the variable Hardy space as in Definition \ref{hx} with $X$ replaced by $L^{q(\cdot)}(\mathbb R^{n})$, which is introduced in \cite{CW, NS}.
For a more detailed discussion on variable Hardy spaces, we refer the reader to
\cite{CMN,Tan,Tan1,TZ,WWX}. 

\begin{theorem}\label{th54} 
Given an integer \( m \geq 1 \), let \( q_1(\cdot), \dots, q_m(\cdot)\in LH \) such that \( 0 < (q_k)_- \leq (q_k)_+ < \infty \). Define
\[
\frac{1}{q(\cdot)} = \frac{1}{q_1(\cdot)} + \cdots + \frac{1}{q_m(\cdot)}.
\]
Let $T$ be a multilinear Calder\'on--Zygmund operator associated to the kernel $K$ that satisfies (\ref{s1e1}) for all \( |\alpha| \leq N \).
If $\frac{mn}{n+N+1}<q^-_k\le q^+_k<\infty$, then $T$ extends to a bounded operator from
$H^{q_1(\cdot)} \times \cdots \times H^{q_m(\cdot)}$ into $L^{q(\cdot)}$.
\end{theorem}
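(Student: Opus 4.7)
The plan is to recognize Theorem \ref{th54} as a direct specialization of Theorem \ref{s1th1} to the setting where each ball quasi-Banach function space is taken to be a variable Lebesgue space. Concretely, I would identify $X_{k}^{p_{k}} = L^{q_{k}(\cdot)}(\mathbb{R}^{n})$ for $1 \le k \le m$ and $X^{p} = L^{q(\cdot)}(\mathbb{R}^{n})$, with the convenient normalization $p_{1} = \cdots = p_{m} = p = 1$. Once this dictionary is set, the variable Hardy spaces $H^{q_{k}(\cdot)}$ recalled in Subsection \ref{s5s2} agree with the generalized Hardy spaces $H_{X_{k}^{p_{k}}}$ of Definition \ref{hx}, so it is enough to check that the three structural hypotheses of Theorem \ref{s1th1} are satisfied and to confirm that the admissibility range of $\theta$ is non-empty under the assumption $q_{k}^{-} > \frac{mn}{n+N+1}$.

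The verification step proceeds along three parallel lines. First, Assumption \ref{ass2.7} (the powered Fefferman--Stein vector-valued inequality) holds for $L^{q_{k}(\cdot)}$ whenever $s \in (0,1]$ and $\theta \in (0,\min\{s,q_{k}^{-}\})$; this is precisely \cite[Remark 2.4(g)]{WYY}, whose hypotheses are met because $q_{k}(\cdot) \in LH$. The analogous statement for $L^{q(\cdot)}$ follows from the identity $\frac{1}{q(\cdot)} = \sum_{k}\frac{1}{q_{k}(\cdot)}$, which ensures $q(\cdot) \in LH$ and $q^{-} > 0$, so some pair $(\theta_{p}, s_{p})$ exists. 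Second, Assumption \ref{ass2.8} is furnished for both $L^{q_{k}(\cdot)}$ and $L^{q(\cdot)}$ by \cite[Remark 2.7(f)]{WYY}, again exploiting $LH$-regularity of the exponents. Third, the required H\"older inequality \eqref{holder} is exactly the content of the generalized H\"older inequality on variable Lebesgue spaces stated just above the theorem.

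With the assumptions verified, the crux of the argument is the compatible selection of $\theta$ and $s$. The hypothesis $\frac{mn}{n+N+1} < q_{k}^{-}$ for every $k$ means that
\[
\left[\tfrac{mn}{n+N+1},\, \min_{1 \le k \le m} q_{k}^{-}\right) \cap (0,1)
\]
is non-empty, so I would fix $\theta$ in this intersection and then choose any $s$ satisfying $\theta < s < \min\{1, \min_{k} q_{k}^{-}\}$; this single pair $(\theta, s)$ simultaneously witnesses Assumption \ref{ass2.7} for all source spaces $L^{q_{k}(\cdot)}$. For the target space $L^{q(\cdot)}$, a separate pair $(\theta_{p}, s_{p})$ can be chosen freely since Theorem \ref{s1th1} imposes no lower bound on $\theta_{p}$.

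Once these parameters are in place, Theorem \ref{s1th1} applies verbatim and yields the desired boundedness $T : H^{q_{1}(\cdot)} \times \cdots \times H^{q_{m}(\cdot)} \to L^{q(\cdot)}$. The main obstacle I foresee is purely bookkeeping: making sure that the single parameter $\theta$ simultaneously satisfies the kernel-regularity constraint $\theta \ge \frac{mn}{n+N+1}$ and stays strictly below $\min_{k}q_{k}^{-}$ so that Assumption \ref{ass2.7} is genuinely available for every source space at the same time. The assumption $q_{k}^{-} > \frac{mn}{n+N+1}$ is tailored precisely to make this compatibility possible, so no further analytic input is needed beyond quoting Theorem \ref{s1th1}.
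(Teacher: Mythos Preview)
Your proposal is correct and follows essentially the same approach as the paper: the paper treats Theorem \ref{th54} as a direct specialization of Theorem \ref{s1th1} with $X_k = L^{q_k(\cdot)}$, $X = L^{q(\cdot)}$, and $p = p_1 = \cdots = p_m = 1$, verifying Assumptions \ref{ass2.7} and \ref{ass2.8} through \cite[Remarks 2.4(g) and 2.7(f)]{WYY} and the H\"older inequality via the variable-exponent proposition quoted in Subsection \ref{s5s2}. Your parameter discussion (ensuring a single $\theta \in [\frac{mn}{n+N+1}, \min_k q_k^{-}) \cap (0,1)$ works for all source spaces) is in fact more explicit than the paper's, which leaves this compatibility implicit.
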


\begin{theorem}\label{th55} 
Given \( q(\cdot), q_1(\cdot), \dots, q_m(\cdot)\) as in Theorem \ref{th54}.
Let $T$ be a multilinear Calder\'on--Zygmund operator associated to the kernel $K$ that satisfies (\ref{s1e1}) for all \( |\alpha| \leq N \).
Assume further that
\begin{align}\label{s1c4}
\int_{(\mathbb R^n)^m}x^\alpha T(a_1,a_2,\cdots,a_m)(x)dx=0,
\end{align}
for all $|\alpha|\le \widetilde N$ and $(L^{q_k(\cdot)},q,N)$-atoms $a_k$,
$k=1,2,\cdots,m$.
If $\frac{mn}{n+N+1}\vee\frac{mn}{N-\widetilde N}<q_k^-\le q_k^+<\infty$
and $\frac{n}{n+\widetilde N+1}<q^-\le q^+<\infty$, then $T$ extends to a bounded operator from
$H^{q_1(\cdot)} \times \cdots \times H^{q_m(\cdot)}$ to $H^{q(\cdot)}$.
\end{theorem}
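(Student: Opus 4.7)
The plan is to deduce this result by a direct application of Theorem \ref{s1th2}, taking $X_k^{p_k} := L^{q_k(\cdot)}(\mathbb{R}^{n})$ and $X^{p} := L^{q(\cdot)}(\mathbb{R}^{n})$ with $p = p_1 = \cdots = p_m = 1$, so that the associated Hardy spaces in Definition \ref{hx} coincide with the variable Hardy spaces $H^{q_k(\cdot)}$ and $H^{q(\cdot)}$ studied in \cite{CW,NS}. Thus the work reduces to verifying the structural hypotheses of Theorem \ref{s1th2} in the variable setting and fitting the parameters $\theta$, $s$, $\theta_p$, $s_p$ into the correct windows.

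First, I would collect the relevant structural facts already recorded in Subsection \ref{s5s2}: whenever $q_k(\cdot)\in LH$ with $0<(q_k)_-\leq (q_k)_+<\infty$, the space $L^{q_k(\cdot)}$ is a ball quasi-Banach function space, and it satisfies Assumption \ref{ass2.7} for any $s\in (0,1]$ and $\theta\in (0,\min\{s,(q_k)_-\})$, and Assumption \ref{ass2.8} for any $s\in (0,\min\{1,(q_k)^-\})$ and $q\in (\max\{1,(q_k)^+\},\infty]$. The same facts apply to $L^{q(\cdot)}$. The generalized Hölder inequality recalled just above Theorem \ref{th54} supplies the H\"older condition (\ref{holder}). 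The vanishing moment hypothesis (\ref{s1c3}) from Theorem \ref{s1th2} is exactly (\ref{s1c4}).

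Next, I would choose the parameters. Because the hypothesis guarantees
\[
\frac{mn}{n+N+1}\vee\frac{mn}{N-\widetilde N}<(q_k)_-\quad\text{for every }1\le k\le m,
\]
one may select
\[
\theta\in\left[\tfrac{mn}{n+N+1}\vee\tfrac{mn}{N-\widetilde N},\ \min_{1\le k\le m}(q_k)_-\right)\cap(0,1),
\]
and then pick $s\in(\theta,1)$ with $s<\min_k (q_k)_-$, so that Assumption \ref{ass2.7} and Assumption \ref{ass2.8} both hold for every $L^{q_k(\cdot)}$ with this common pair $(\theta,s)$. Similarly, the hypothesis $\frac{n}{n+\widetilde N+1}<q^-$ allows us to choose
\[
\theta_p\in\left[\tfrac{n}{n+\widetilde N+1},\ q^-\right)\cap(0,1),
\]
and then $s_p\in(\theta_p,1)$ with $s_p<q^-$, so that Assumptions \ref{ass2.7}--\ref{ass2.8} are in force for $L^{q(\cdot)}$ with $(\theta_p,s_p)$.

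With all hypotheses of Theorem \ref{s1th2} verified, the conclusion $T:H^{q_1(\cdot)}\times\cdots\times H^{q_m(\cdot)}\to H^{q(\cdot)}$ follows immediately. The only genuinely nontrivial points, beyond bookkeeping of indices, are (i) the vector-valued inequality for the powered Hardy--Littlewood maximal operator on $L^{q(\cdot)}$ required by Assumption \ref{ass2.7}, which follows from the $LH$ regularity through the Fefferman--Stein extrapolation for variable Lebesgue spaces, and (ii) the delicate choice of the quadruple $(\theta,s,\theta_p,s_p)$ to simultaneously satisfy the lower bounds coming from the kernel regularity $N$ and $\widetilde N$ and the upper bounds coming from $(q_k)_-$ and $q^-$. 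I expect (ii) to be the main subtlety, and it is precisely the strict inequalities in the hypothesis that make it possible.
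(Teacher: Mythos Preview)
Your proposal is correct and follows exactly the paper's approach: the paper simply records that $L^{q_k(\cdot)}$ satisfies Assumptions \ref{ass2.7} and \ref{ass2.8} for the indicated ranges, quotes the generalized H\"older inequality, and then states that Theorems~\ref{s1th1}--\ref{s1th4} hold with $X=L^{q(\cdot)}$, $X_k=L^{q_k(\cdot)}$ and $p=p_1=\cdots=p_m=1$. Your argument is actually more detailed than the paper's in spelling out the admissible choice of $(\theta,s,\theta_p,s_p)$, but the underlying strategy is identical.
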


\begin{theorem}\label{th56}
Let all the notation be as in Theorem \ref{th54}
and let $T$ be a multilinear Calder\'on--Zygmund operator associated to the kernel $K$ that satisfies (\ref{s1e1}).
If $\frac{mn}{n+N+1}<q^-_k\le q^+_k<\infty$, then $T_\ast$ extends to a bounded operator from
$H^{q_1(\cdot)} \times \cdots \times H^{q_m(\cdot)}$ into $L^{q(\cdot)}$.
\end{theorem}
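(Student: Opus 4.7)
The plan is to obtain Theorem \ref{th56} as a direct specialization of the abstract Theorem \ref{s1th3} to the variable Lebesgue space framework, taking $X := L^{q(\cdot)}(\mathbb R^n)$, $X_k := L^{q_k(\cdot)}(\mathbb R^n)$ for $1 \le k \le m$, and (formally) $p = p_1 = \cdots = p_m = 1$. The main task is therefore to verify that all structural hypotheses of Theorem \ref{s1th3} are satisfied in this setting and then quote the abstract theorem.

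First, I would invoke the facts recorded at the beginning of Subsection \ref{s5s2}: since $q_k(\cdot) \in LH$ with $0 < (q_k)^- \le (q_k)^+ < \infty$, each $L^{q_k(\cdot)}$ is a ball quasi-Banach function space which satisfies Assumption \ref{ass2.7} for any $s \in (0,1]$ and $\theta \in (0, \min\{s, (q_k)^-\})$, and Assumption \ref{ass2.8} for $s \in (0, \min\{1, (q_k)^-\})$ together with any sufficiently large $q \in (\max\{1, (q_k)^+\}, \infty]$. Since $1/q(\cdot) = \sum_{k=1}^{m} 1/q_k(\cdot)$ yields $q(\cdot) \in \mathcal{P}^0 \cap LH$ with $0 < q^- \le q^+ < \infty$, the same statements apply to $X = L^{q(\cdot)}$.

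Second, to match the numerology of Theorem \ref{s1th3}, I need to select parameters $(\theta, s)$ that are admissible for all spaces $L^{q_k(\cdot)}$ simultaneously and that satisfy the admissibility constraint $\theta \in [\frac{mn}{n+N+1}, 1)$. Setting $\underline{q} := \min_{1 \le k \le m} (q_k)^-$, the hypothesis $\frac{mn}{n+N+1} < (q_k)^-$ for every $k$ yields $\frac{mn}{n+N+1} < \underline{q}$, so one may pick
\[
\theta \in \Bigl[\tfrac{mn}{n+N+1},\, \min\{1,\underline{q}\}\Bigr), \qquad s \in (\theta, 1),
\]
and correspondingly parameters $(\theta_p, s_p)$ for $X = L^{q(\cdot)}$ (here no lower bound on $\theta_p$ is needed; see the remark following the proof of Theorem \ref{s1th1}). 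The required H\"older inequality \eqref{holder} reduces, with $p = p_k = 1$, to the generalized H\"older inequality for variable Lebesgue spaces quoted in Subsection \ref{s5s2}.

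Finally, I would apply Theorem \ref{s1th3} with these choices to conclude that $T_\ast$ extends to a bounded operator from $H^{q_1(\cdot)} \times \cdots \times H^{q_m(\cdot)}$ into $L^{q(\cdot)}$. I do not foresee a substantive obstacle: the only delicate point is the simultaneous choice of $\theta$ in the previous paragraph, which is exactly enabled by the hypothesis $\frac{mn}{n+N+1} < (q_k)^-$ (rather than a condition on $q^-$) and which also explains why this lower bound on the exponents appears in the statement.
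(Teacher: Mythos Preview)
Your proposal is correct and follows essentially the same approach as the paper: the paper derives Theorem \ref{th56} (and the companion results in Subsection \ref{s5s2}) by verifying that the variable Lebesgue spaces $L^{q_k(\cdot)}$ satisfy Assumptions \ref{ass2.7} and \ref{ass2.8} together with the generalized H\"older inequality, and then invoking the abstract Theorem \ref{s1th3} with $X_k = L^{q_k(\cdot)}$ and $p=p_1=\cdots=p_m=1$. Your careful selection of $\theta \in [\tfrac{mn}{n+N+1}, \min\{1,\underline{q}\})$ makes explicit the one step the paper leaves implicit.
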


Observe that $h^{q(\cdot)}(\mathbb R^{n})$ is the variable local Hardy space as in Remark \ref{local} with $X$ replaced by $L^{q(\cdot)}(\mathbb R^{n})$. Then we have the following boundedness result.

\begin{theorem}\label{th54-1} 
Let \( q(\cdot), q_1(\cdot), \dots, q_m(\cdot)\) be the same as in Theorem \ref{th54} and let $T$ be a multilinear pseudo-differential operator with the symbol $\sigma\in MB_{1,0}^0$.
If $q_k(\cdot)\in\mathcal{P}^{0}\cap LH$, then $T_\sigma$ extends to a bounded operator from
$h^{q_1(\cdot)} \times \cdots \times h^{q_m(\cdot)}$ into $L^{q(\cdot)}$.
\end{theorem}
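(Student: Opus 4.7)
The plan is to deduce Theorem \ref{th54-1} directly from Theorem \ref{s1th4} by specializing to the variable Lebesgue setting with $X^{p_k}_k := L^{q_k(\cdot)}(\mathbb R^n)$, $X^p := L^{q(\cdot)}(\mathbb R^n)$ and $p = p_1 = \cdots = p_m = 1$. The task therefore reduces to verifying, in the variable exponent framework, each of the three structural hypotheses required by Theorem \ref{s1th4}: the ball quasi-Banach function space structure together with Assumptions \ref{ass2.7} and \ref{ass2.8}, and the multilinear H\"older inequality (\ref{holder}).

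First, I would record the variable-exponent facts already collected in Subsection \ref{s5s2}. Whenever $q_k(\cdot)\in\mathcal P^0$, the space $L^{q_k(\cdot)}(\mathbb R^n)$ is a ball quasi-Banach function space by \cite[Subsection 7.4]{SHYY}. The harmonic combination $\frac{1}{q(\cdot)}=\sum_{k=1}^{m}\frac{1}{q_k(\cdot)}$ keeps $q(\cdot)$ inside $\mathcal P^0\cap LH$, so the same applies to $L^{q(\cdot)}(\mathbb R^n)$. Next, by \cite[Remark 2.4(g)]{WYY}, Assumption \ref{ass2.7} holds for $L^{q_k(\cdot)}$ with any $s\in(0,1]$ and $\theta\in(0,\min\{s,(q_k)_-\})$, while \cite[Remark 2.7(f)]{WYY} yields Assumption \ref{ass2.8} for $s\in(0,\min\{1,(q_k)_-\})$ and suitably large $q$. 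One then picks a common pair $(\theta,s)$ with $0<\theta<s<1$ strictly below $\min_{1\le k\le m}(q_k)_-$, which works simultaneously for all $X_k$; an analogous pair $(\theta_p,s_p)$ with the same property works for $X^p$, since $q_-(\cdot)$ is controlled by the $(q_k)_-$'s.

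Second, the multilinear H\"older inequality (\ref{holder}) is precisely the proposition in Subsection \ref{s5s2} attributed to \cite{CF,TLZ}, which states
\[
\Bigl\|\prod_{k=1}^m f_k\Bigr\|_{L^{q(\cdot)}}\lesssim \prod_{k=1}^m\|f_k\|_{L^{q_k(\cdot)}}
\]
whenever $\frac{1}{q(\cdot)}=\sum_{k=1}^{m}\frac{1}{q_k(\cdot)}$. This supplies exactly the last ingredient needed.

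With these three points in hand, every hypothesis of Theorem \ref{s1th4} is satisfied, and its conclusion becomes the desired boundedness $T_\sigma: h^{q_1(\cdot)}\times\cdots\times h^{q_m(\cdot)}\to L^{q(\cdot)}$. I do not anticipate a genuine obstacle here: the only mildly delicate point is coordinating the parameters $(\theta,s)$ and $(\theta_p,s_p)$ so they satisfy the Fefferman--Stein-type vector-valued inequality and the maximal bound on the associate space simultaneously for all spaces in play, but this is immediate from the $LH$-regularity of the $q_k(\cdot)$ together with the strict positivity of $\min_k (q_k)_-$.
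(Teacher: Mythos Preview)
Your proposal is correct and follows exactly the paper's own approach: the paper derives Theorem \ref{th54-1} as a direct specialization of Theorem \ref{s1th4} to $X_k^{p_k}=L^{q_k(\cdot)}(\mathbb R^n)$, $X^p=L^{q(\cdot)}(\mathbb R^n)$, $p=p_1=\cdots=p_m=1$, after recording in Subsection \ref{s5s2} that these spaces are ball quasi-Banach function spaces satisfying Assumptions \ref{ass2.7} and \ref{ass2.8} and the multilinear H\"older inequality. Your parameter-coordination remark is a harmless elaboration of what the paper leaves implicit.
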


\subsection{(Local) Hardy--Morrey spaces}\label{s5s3}
First we recall the definition of Morrey space $M^{p}_{r}(\mathbb{R}^n)$ with $0<r \le p<\infty$. For more details, see \cite{A, FGPW, Mo, SDH, SDH1, YSY} and the references therein.
 For any $x\in \mathbb{R}^n$ and $l\in(0,\infty)$, let $B(x,l):=\{y\in \mathbb{R}^n:|x-y|<l\}$ and $\mathbb{B}(\mathbb{R}^n):=\{B(x,l):x\in\mathbb{R}^n \text{ and } l\in(0,\infty)\}$.
\begin{definition}
	Let $0<r \le p<\infty$. The {\it Morrey space} $M^p_r(\mathbb R^n)$ is defined to be
	the set of all the measurable functions $f$ on $\mathbb R^n$ such that 
	\begin{equation*}
		\|f\|_{M_{r}^{p}\left(\mathbb{R}^{n}\right)}:=\sup _{B \in \mathbb{B}\left(\mathbb{R}^{n}\right)}|B|^{\frac{1}{p}-\frac{1}{r}}\|f\|_{L^{r}(B)}<\infty .
	\end{equation*}
\end{definition}
From the definition of ball-quasi Banach function space \cite[Definition 2.2]{SHYY}, we know that Morrey space $M^p_r(\mathbb R^n)$ is a ball-quasi Banach function space. From \cite[Lemma 2.5]{TX}, we find that Assumption~\ref{ass2.7} holds true for any $s \in (0,1]$ and $\theta\in (0,\min\{s,r\})$.

Let $1<r\le p< \infty$ and $p^{\prime}$, $r^{\prime}$ are conjugate numbers of $p$, $r$. A function $b$ on $\mathbb{R}^n$ is called a $(p^{\prime},r^{\prime})$-block if $\rm{supp}(b)\subset Q $ with $Q\in \mathcal{Q}$, and
\begin{equation}\label{eq7}
	\left(\int_{Q}|b(x)|^{r^{\prime}} d x\right)^{\frac{1}{r^{\prime}}} \leq|Q|^{\frac{1}{p}-\frac{1}{r}} ,
\end{equation}
where $\mathcal{Q}$ denotes the family of all cubes in $\mathbb{R}^n$ with sides parallel to the coordinate axes. The space $\mathcal{B}_{r^{\prime}}^{p^{\prime}}(\mathbb{R}^n)$ is deﬁned by the set of all functions $f$ locally in $L^{r^{\prime}}(\mathbb{R}^n)$ with the norm
\begin{equation*}
	\|f\|_{\mathcal{B}_{r^{\prime}}^{p^{\prime}}\left(\mathbb{R}^{n}\right)}:=\inf \left\{\left\|\left\{\lambda_{k}\right\}\right\|_{l^{1}}: f=\sum_{k} \lambda_{k} b_{k}\right\}<\infty,
\end{equation*}
where $\left\|\left\{\lambda_{k}\right\}_{k=1}^{\infty}\right\|_{l^{1}}=\sum_{k}\left|\lambda_{k}\right|<\infty$ and $b_k$ is a $(p^{\prime},r^{\prime})$-block, and the inﬁmum is taken over all
possible decompositions of $f$. 
Also, for $s\in (0,\min\{1,r\})$ and a $q\in(\max\{1,p\},\infty)$ we have that
\begin{equation*}
	[(M^{p/s}_{r/s})^{\prime}]^{1/(q/s)^{\prime}}(\mathbb{R}^n)=\mathcal{B}^{(p/s)^\prime/(q/s)^\prime}_{(r/s)^\prime/(q/s)^\prime}(\mathbb{R}^n).
\end{equation*}
Hence, from this and the fact that $\mathcal{M}$ is bounded on $\mathcal{B}^{p}_{r}(\mathbb{R}^n)$ for any $1<p\le r< \infty$, we get the 
Assumption~\ref{ass2.8} holds true in this case, which can be found in \cite[Remark 2.7(e)]{WYY}.
Moreover, by the definition of Morrey spaces and
the classical H\"{o}lder inequality on Lebesgue spaces,
we can get the following proposition. For more details, see \cite[Section 11]{SDH1} and \cite[Section 6]{HN}.

\begin{proposition}
Let $0<r_k \le p_k<\infty$, where $k=1,\dots, m$. Define $0<r \le p<\infty$ by
$$\frac{1}{p}=\sum_{k=1}^m\frac{1}{p_k},\quad \frac{1}{r}=\sum_{k=1}^m\frac{1}{r_k}.$$
Then for all $f_k\in {M_{r_k}^{p_k}}(\mathbb R^{n})$ we have
$$\left\|\prod_{k=1}^mf_k\right\|_{M^{p}_r}\lesssim \prod_{k=1}^m\|f_k\|_{M_{r_k}^{p_k}}.$$
\end{proposition}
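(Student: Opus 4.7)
The plan is to reduce to classical Hölder's inequality on each ball and then track how the Morrey normalization factors $|B|^{1/p-1/r}$ combine across the product. Fix an arbitrary ball $B\in\mathbb{B}(\mathbb{R}^n)$. By hypothesis $\frac{1}{r}=\sum_{k=1}^m\frac{1}{r_k}$, so multiplying by $r$ gives $\sum_{k=1}^m\frac{r}{r_k}=1$; hence the exponents $r_k/r$ are conjugate in the sense of generalized Hölder. The classical Hölder inequality for Lebesgue spaces applied on $B$ then yields
\[
\left\|\prod_{k=1}^m f_k\right\|_{L^r(B)}\le \prod_{k=1}^m\|f_k\|_{L^{r_k}(B)}.
\]

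Next, I unravel the Morrey norm of each factor. From the definition of $M_{r_k}^{p_k}$, for every ball $B$ and every $k$,
\[
\|f_k\|_{L^{r_k}(B)}\le |B|^{1/r_k-1/p_k}\,\|f_k\|_{M_{r_k}^{p_k}},
\]
where the exponent is nonnegative because $r_k\le p_k$. Multiplying these $m$ estimates together and using both scaling identities $\frac{1}{p}=\sum_k\frac{1}{p_k}$ and $\frac{1}{r}=\sum_k\frac{1}{r_k}$, the accumulated powers of $|B|$ telescope to $\sum_{k=1}^m\bigl(\tfrac{1}{r_k}-\tfrac{1}{p_k}\bigr)=\tfrac{1}{r}-\tfrac{1}{p}$, whence
\[
\left\|\prod_{k=1}^m f_k\right\|_{L^r(B)}\le |B|^{1/r-1/p}\prod_{k=1}^m\|f_k\|_{M_{r_k}^{p_k}}.
\]

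Finally, multiplying through by $|B|^{1/p-1/r}$ and taking the supremum over all $B\in\mathbb{B}(\mathbb{R}^n)$ (equivalently, over all cubes by standard equivalence of the Morrey norm) produces the desired conclusion
\[
\left\|\prod_{k=1}^m f_k\right\|_{M_r^p}\le \prod_{k=1}^m\|f_k\|_{M_{r_k}^{p_k}}.
\]
There is no genuine obstacle here: the entire argument is bookkeeping of exponents driven by the two hypotheses $\frac{1}{p}=\sum\frac{1}{p_k}$ and $\frac{1}{r}=\sum\frac{1}{r_k}$, and the implicit constant in $\lesssim$ can in fact be taken equal to $1$. The only subtlety worth flagging is that one should verify the step is legitimate when some $\|f_k\|_{M_{r_k}^{p_k}}$ happens to be infinite or zero, but these boundary cases are immediate from the definition.
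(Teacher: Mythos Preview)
Your proof is correct and follows precisely the approach the paper indicates: the paper does not spell out a proof but simply states that the result follows ``by the definition of Morrey spaces and the classical H\"older inequality on Lebesgue spaces,'' referring to \cite{SDH1,HN} for details. Your argument is exactly this, carried out in full, and even shows the implicit constant can be taken equal to $1$.
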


Therefore, Theorems~\ref{s1th1},\ \ref{s1th2}, \ref{s1th3} and \ref{s1th4} hold true when $X=M^{p}_r(\mathbb R^{n})$, $X_1=M_{r_1}^{p_1}(\mathbb R^{n})$, $\dots,$
$X_m=M_{r_1}^{p_1}(\mathbb R^{n})$ and $p=p_1=\cdots=p_m=1$.
Denote $HM^{p}_r(\mathbb R^{n})$ the Hardy--Morrey space as in Definition \ref{hx} with $X$ replaced by $M^{p}_r(\mathbb R^{n})$, which have studied in \cite{JW, S}.

\begin{theorem}\label{th57} Given an integer \( m \geq 1 \), let \( 0<r\le p<\infty \), \( 0<r_k\le p_k<\infty, \)  such that
\[
\frac{1}{r} = \sum_{k=1}^m\frac{1}{r_k},\quad 
\frac{1}{p} = \sum_{k=1}^m\frac{1}{p_k},
\]
where $k=1,2,\dots,m$.
Let $T$ be a multilinear Calder\'on--Zygmund operator associated to the kernel $K$ that satisfies (\ref{s1e1}) for all \( |\alpha| \leq N \).
If $\frac{mn}{n+N+1}<r_k<\infty$, then $T$ extends to a bounded operator from
$HM^{p_1}_{r_1}\times \cdots \times HM^{p_m}_{r_m}$ into $M^{p}_r$.
\end{theorem}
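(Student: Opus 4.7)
The plan is to obtain Theorem \ref{th57} as a direct application of Theorem \ref{s1th1} with the concrete choices $X^p := M^p_r(\mathbb R^n)$ and $X_k^{p_k}:=M^{p_k}_{r_k}(\mathbb R^n)$, $k=1,\dots,m$. What I need to do is verify, for these particular spaces, every hypothesis of Theorem \ref{s1th1}: ball quasi-Banach function space structure, Assumption \ref{ass2.7}, Assumption \ref{ass2.8}, the H\"older inequality \eqref{holder}, and the range requirement on $\theta$.

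Most of these ingredients have already been collected in the discussion preceding the theorem. Morrey spaces are ball quasi-Banach function spaces by \cite[Definition 2.2]{SHYY}; by \cite[Lemma 2.5]{TX}, Assumption \ref{ass2.7} holds for $M^{p_k}_{r_k}$ for any $s\in(0,1]$ and any $\theta\in(0,\min\{s,r_k\})$, and likewise for $M^p_r$; Assumption \ref{ass2.8} follows from the block-space representation $[(M^{p/s}_{r/s})']^{1/(q/s)'}=\mathcal B^{(p/s)'/(q/s)'}_{(r/s)'/(q/s)'}$ together with the boundedness of $\mathcal M$ on $\mathcal B^{p'}_{r'}$, as recorded in \cite[Remark 2.7(e)]{WYY}. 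The required H\"older inequality \eqref{holder} under the relations $\frac{1}{p}=\sum_{k=1}^m\frac{1}{p_k}$ and $\frac{1}{r}=\sum_{k=1}^m\frac{1}{r_k}$ is precisely the proposition stated just above the theorem.

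The step requiring the most care is the choice of parameters. I must exhibit a single $\theta\in[\frac{mn}{n+N+1},1)$ with $\theta<r_k$ for every $k$, so that Assumption \ref{ass2.7} is simultaneously valid for each $X_k^{p_k}$. The hypothesis $\frac{mn}{n+N+1}<r_k$ for all $k$ is exactly what makes the interval $[\frac{mn}{n+N+1},\min_k r_k)$ non-empty, and once $\theta$ is chosen there, any $s\in(\theta,1)$ will do. For $X^p=M^p_r$, the remark following Theorem \ref{s1th1} observes that the lower bound on $\theta$ is not required, so any $\theta_p\in(0,\min\{s_p,r\})$ with $s_p\in(\theta_p,1)$ will suffice. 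Feeding these parameters into Theorem \ref{s1th1} should then yield the claimed boundedness $T:HM^{p_1}_{r_1}\times\cdots\times HM^{p_m}_{r_m}\to M^p_r$.

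The only genuine obstacle will be this bookkeeping of index constraints, ensuring that a common $\theta$ controls both the singular-integral size estimate (which demands $\theta\ge\frac{mn}{n+N+1}$) and the Fefferman--Stein-type vector-valued maximal inequality on the Morrey factors (which demands $\theta<r_k$). No further kernel analysis will be necessary, since all of the harder work has been absorbed into the abstract statement of Theorem \ref{s1th1}.
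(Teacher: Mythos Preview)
Your proposal is correct and follows essentially the same route as the paper: the paper also derives Theorem \ref{th57} directly from Theorem \ref{s1th1} by taking $X^p=M^p_r$, $X_k^{p_k}=M^{p_k}_{r_k}$ (with the convexification exponents set to $1$), invoking \cite[Lemma 2.5]{TX} for Assumption \ref{ass2.7}, \cite[Remark 2.7(e)]{WYY} for Assumption \ref{ass2.8}, and the Morrey H\"older inequality for \eqref{holder}. Your careful bookkeeping of the admissible $\theta$-range is exactly the verification the paper leaves implicit.
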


\begin{theorem}\label{th58} Given \( r, p, r_k, p_k\) as in Theorem \ref{th57}, 
where $k=1,2,\ldots,m$.
Let $T$ be a multilinear Calder\'on--Zygmund operator associated to the kernel $K$ that satisfies (\ref{s1e1}) for all \( |\alpha| \leq N \).
Assume further that
\begin{align}\label{s1c4}
\int_{(\mathbb R^n)^m}x^\alpha T(a_1,a_2,\cdots,a_m)(x)dx=0,
\end{align}
for all $|\alpha|\le \widetilde N$ and $(M^{p_k}_{r_k},q,N)$-atoms $a_k$,
$k=1,2,\cdots,m$.
If $\frac{mn}{n+N+1}\vee\frac{mn}{N-\widetilde N}<r_k<\infty$
and $\frac{n}{n+\widetilde N+1}<r<\infty$, then $T$ extends to a bounded operator from
$HM^{p_1}_{r_1}\times \cdots \times HM^{p_m}_{r_m}$ to $HM^{p}_r$.
\end{theorem}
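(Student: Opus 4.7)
The plan is to deduce Theorem \ref{th58} by a direct application of Theorem \ref{s1th2}, specialized to $X^{p}=M^{p}_{r}$ and $X^{p_k}_{k}=M^{p_k}_{r_k}$ for $k=1,\dots,m$ (with $p=p_1=\cdots=p_m=1$ in the convexification notation, exactly as indicated for Morrey spaces in the opening of Subsection \ref{s5s3}). All of the structural hypotheses required by Theorem \ref{s1th2} have been recorded immediately above: each Morrey space is a ball quasi-Banach function space, Assumption \ref{ass2.7} holds for $M^{p}_{r}$ for any $s\in(0,1]$ and any $\theta\in(0,\min\{s,r\})$, Assumption \ref{ass2.8} follows from the block-space identification $[(M^{p/s}_{r/s})']^{1/(q/s)'}=\mathcal{B}^{(p/s)'/(q/s)'}_{(r/s)'/(q/s)'}$ together with the boundedness of $\mathcal{M}$ on $\mathcal{B}^{p'}_{r'}$, and the multilinear H\"older inequality \eqref{holder} is the one stated in the proposition preceding Theorem \ref{th57}.

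The main step is to match the quantitative parameters. The hypothesis
\[
\tfrac{mn}{n+N+1}\vee\tfrac{mn}{N-\widetilde{N}}<r_k\qquad (k=1,\dots,m)
\]
allows us to choose $s\in(0,1)$ sufficiently close to $1$ together with a single $\theta$ satisfying
\[
\tfrac{mn}{n+N+1}\vee\tfrac{mn}{N-\widetilde{N}}\le \theta<\min\bigl\{s,\,\min_{1\le k\le m}r_k\bigr\},
\]
so that the domain-space instance of Assumption \ref{ass2.7} with this common $(\theta,s)$ is valid on every $M^{p_k}_{r_k}$. In parallel, the hypothesis $\tfrac{n}{n+\widetilde{N}+1}<r$ lets us select $\theta_p,\,s_p\in(0,1)$ with $\tfrac{n}{n+\widetilde{N}+1}\le\theta_p<s_p<1$ and $\theta_p<r$, so the target-space version of Assumption \ref{ass2.7} holds on $M^{p}_{r}$. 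Assumption \ref{ass2.8} is then arranged by enlarging $q$ so that both $s<q$ and $s_p<q$ fall in the admissible range described above. Finally, the vanishing moment hypothesis \eqref{s1c4} is precisely condition \eqref{s1c3} of Theorem \ref{s1th2} specialized to $(M^{p_k}_{r_k},q,N)$-atoms.

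With every hypothesis matched, Theorem \ref{s1th2} yields the desired boundedness $T\colon HM^{p_1}_{r_1}\times\cdots\times HM^{p_m}_{r_m}\to HM^{p}_{r}$. The only conceptual obstacle one might anticipate is that Morrey spaces fail to have an absolutely continuous quasi-norm, which would defeat a direct application of the weaker Theorem \ref{s4th1}; however, Theorem \ref{s1th2} was established precisely to bypass this failure, via the embedding Lemma \ref{embed} into a suitable weighted Lebesgue space together with the known weighted Hardy space boundedness of multilinear Calder\'on--Zygmund operators. Consequently, no new analytic estimate is required in the Morrey setting, and the proof reduces to the parameter selection carried out above.
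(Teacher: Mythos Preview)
Your proposal is correct and mirrors the paper's own approach: the paper derives Theorem \ref{th58} simply by specializing Theorem \ref{s1th2} to $X=M^{p}_{r}$ and $X_k=M^{p_k}_{r_k}$, having verified just above that Morrey spaces satisfy Assumptions \ref{ass2.7} and \ref{ass2.8} and the H\"older inequality \eqref{holder}. Your observation that the lack of an absolutely continuous quasi-norm is handled by Theorem \ref{s1th2} (via Lemma \ref{embed}) rather than Theorem \ref{s4th1} is exactly the point, and your parameter matching is the only content needed.
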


\begin{theorem}\label{th59}\quad
Let all the notation be as in Theorem \ref{th57}
and let $T$ be a multilinear Calder\'on--Zygmund operator associated to the kernel $K$ that satisfies (\ref{s1e1}).
If $\frac{mn}{n+N+1}<r_k<\infty$, then $T_\ast$ extends to a bounded operator from
$HM^{p_1}_{r_1}\times \cdots \times HM^{p_m}_{r_m}$ into $M^{p}_r$.
\end{theorem}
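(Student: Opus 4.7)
The plan is to obtain Theorem~\ref{th59} as a direct specialization of the abstract maximal-operator result, Theorem~\ref{s1th3}, to the case in which each ball quasi-Banach function space $X^{p_k}_k$ is taken to be the Morrey space $M^{p_k}_{r_k}(\mathbb R^n)$ and $X^p:=M^p_r(\mathbb R^n)$. So the whole argument amounts to checking that the Morrey data satisfy each of the hypotheses of Theorem~\ref{s1th3}, and then reading off the conclusion.

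First I would record, from Subsection~\ref{s5s3}, that $M^{p_k}_{r_k}$ is indeed a ball quasi-Banach function space, that Assumption~\ref{ass2.7} holds for it whenever $s\in(0,1]$ and $\theta\in(0,\min\{s,r_k\})$, and that Assumption~\ref{ass2.8} holds via the identification
\[
\bigl[(M^{p_k/s}_{r_k/s})'\bigr]^{1/(q/s)'}=\mathcal{B}^{(p_k/s)'/(q/s)'}_{(r_k/s)'/(q/s)'}(\mathbb R^n)
\]
together with the boundedness of $\mathcal M$ on the block space $\mathcal B^{\,\cdot}_{\,\cdot}$, as cited from \cite[Remark~2.7(e)]{WYY}. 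The same verifications apply, with the same $s$, to the target space $M^p_r$, whose indices are determined by $1/p=\sum_k 1/p_k$ and $1/r=\sum_k 1/r_k$.

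Next I would invoke the Hölder inequality on Morrey spaces recorded in the proposition of Subsection~\ref{s5s3}, which is exactly the assumption~(\ref{holder}) required by Theorem~\ref{s1th3}. The only remaining point is the scalar index condition $\theta\in\bigl[\frac{mn}{n+N+1},1\bigr)$ of Theorem~\ref{s1th3}. Because Assumption~\ref{ass2.7} for the Morrey spaces $M^{p_k}_{r_k}$ forces $\theta<\min\{s,r_k\}\le r_k$ for every $k$, such a $\theta$ exists simultaneously for all $k$ if and only if $\frac{mn}{n+N+1}<r_k$ for every $k=1,\dots,m$; this is precisely the hypothesis stated in Theorem~\ref{th59}.

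Having matched every hypothesis of Theorem~\ref{s1th3} to a hypothesis of Theorem~\ref{th59}, the conclusion $T_\ast:H_{X^{p_1}_1}\times\cdots\times H_{X^{p_m}_m}\to X^p$ of Theorem~\ref{s1th3} specializes to $T_\ast:HM^{p_1}_{r_1}\times\cdots\times HM^{p_m}_{r_m}\to M^p_r$, which is what we want. There is no genuine obstacle here: all the heavy lifting (the atomic decomposition, the off-diagonal kernel estimate for $T_\ast$ borrowed from \cite{Tan1,WWX}, and the summability of $\mathcal M^{(\theta)}$-terms in $X$) has already been carried out once and for all in the proof of Theorem~\ref{s1th3}. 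If anything, the only place a reader might want to pause is the claim that Assumption~\ref{ass2.8} holds on Morrey spaces through block-space duality, but this is cited and not reproved here.
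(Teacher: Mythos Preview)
Your proposal is correct and matches the paper's approach exactly: the paper does not give a separate proof of Theorem~\ref{th59} but simply records that Morrey spaces satisfy Assumptions~\ref{ass2.7}, \ref{ass2.8} and the H\"older inequality~(\ref{holder}), and then declares that Theorems~\ref{s1th1}--\ref{s1th4} specialize to this setting. Your write-up is in fact slightly more explicit than the paper's, particularly in spelling out why the hypothesis $\frac{mn}{n+N+1}<r_k$ is precisely what is needed to choose a common $\theta$.
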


Denote $hM^{p}_r(\mathbb R^{n})$ the local Hardy--Morrey space as in Remark \ref{local} with $X$ replaced by $M^{p}_r(\mathbb R^{n})$. The boundedness result on multilinear pseudo-differential operators is as follows.

\begin{theorem}\label{th57-1} 
Let \( r, p, r_k, p_k\) be the same as in Theorem \ref{th57}, where $k=1,2,\ldots,m$. 
Let $T$ be a multilinear pseudo-differential operator with the symbol $\sigma\in MB_{1,0}^0$.
If $0<r_k<\infty$, then $T_\sigma$ extends to a bounded operator from
$hM^{p_1}_{r_1}\times \cdots \times hM^{p_m}_{r_m}$ into $M^{p}_r$.
\end{theorem}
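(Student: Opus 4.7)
The plan is to obtain Theorem~\ref{th57-1} as a direct application of the abstract Theorem~\ref{s1th4}, with the concrete choice $X^p := M^p_r(\mathbb R^n)$ as the target and $X^{p_k}_k := M^{p_k}_{r_k}(\mathbb R^n)$ as the factor ball quasi-Banach function spaces for $k=1,\dots,m$. The task therefore reduces to verifying that these Morrey spaces satisfy the three structural hypotheses inherited from Theorem~\ref{s1th1}: they are ball quasi-Banach function spaces, they satisfy Assumption~\ref{ass2.7} for suitable $(\theta,s)$ and Assumption~\ref{ass2.8} for the same $s$, and the multilinear H\"older inequality (\ref{holder}) holds between them.

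First, I would recall, as already noted in this subsection, that $M^p_r(\mathbb R^n)$ is a ball quasi-Banach function space in the sense of Definition~\ref{def2.1}. For Assumption~\ref{ass2.7}, I would invoke \cite[Lemma 2.5]{TX} to obtain the vector-valued inequality (\ref{2.7}) on $M^p_r$ for any $s\in(0,1]$ and any $\theta\in(0,\min\{s,r\})$. Since $\frac{1}{r}=\sum_{k=1}^{m}\frac{1}{r_k}$ forces $r\le r_k$ for every $k$, a single pair $(\theta,s)$ with $\theta\in(0,\min\{s,r\})$ is admissible simultaneously for the target and for every factor space. For Assumption~\ref{ass2.8}, I would use the identification
\[
[(M^{p/s}_{r/s})']^{1/(q/s)'}=\mathcal B^{(p/s)'/(q/s)'}_{(r/s)'/(q/s)'}(\mathbb R^n)
\]
combined with the boundedness of $\mathcal M$ on the block spaces $\mathcal B^{\alpha}_{\beta}$, exactly as recorded in \cite[Remark 2.7(e)]{WYY}. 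The H\"older inequality (\ref{holder}) is furnished directly by the proposition proved immediately before the statement of the theorem.

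Once all these hypotheses have been matched against the concrete Morrey parameters, Theorem~\ref{s1th4} applies verbatim and yields
\[
T_\sigma:h_{M^{p_1}_{r_1}}\times\cdots\times h_{M^{p_m}_{r_m}}\longrightarrow M^p_r,
\]
which, by the notational convention $hM^{p_k}_{r_k}=h_{M^{p_k}_{r_k}}$ set in the paragraph preceding the theorem, is the conclusion sought. I do not anticipate any essential obstacle: the entire argument is a parameter-matching exercise. The only delicate point is the simultaneous choice of $(\theta,s)$ in Assumption~\ref{ass2.7} serving all $m+1$ Morrey spaces at once, which is resolved by the observation $\min\{r,r_1,\dots,r_m\}=r$, so that any $\theta\in(0,\min\{s,r\})$ is admissible throughout.
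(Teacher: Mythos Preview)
Your proposal is correct and follows essentially the same approach as the paper: the subsection on Morrey spaces verifies that each $M^{p_k}_{r_k}$ is a ball quasi-Banach function space satisfying Assumptions~\ref{ass2.7} and~\ref{ass2.8}, records the multilinear H\"older inequality, and then declares that Theorems~\ref{s1th1}--\ref{s1th4} apply with $X=M^p_r$, $X_k=M^{p_k}_{r_k}$, $p=p_1=\cdots=p_m=1$. Your extra observation that $r=\min\{r,r_1,\dots,r_m\}$ guarantees a single admissible pair $(\theta,s)$ is a nice clarification, though strictly speaking Theorem~\ref{s1th1} already allows the target space to carry its own parameters $(\theta_p,s_p)$.
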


\subsection{Mixed-norm (local) Hardy spaces}\label{s5s4}
Now we recall the definition of mixed-norm Lebesgue space $L^{\Vec{p}}(\mathbb R^{n})$. For more information on mixed-norm type spaces, see \cite{BP,HLYY,HY} and the references therein.\par
\begin{definition}
    Let $\Vec{p}:=(p_{1}, \cdots,p_{n})\in(0,\infty]^{n}$. The \emph{mixed-norm Lebesgue space} $L^{\Vec{p}}(\mathbb R^{n})$ is defined to be the set of all the measurable functions $f$ on $\mathbb R^{n}$ such that
    \[
    \|f\|_{L^{\Vec{p}}}:=\left\{  \int_{\mathbb R^{n}}\cdots\left[ \int_{\mathbb R^{n}}\vert f(x_{1},\cdots,x_{n})\vert^{p_{1}}dx_{1}  \right]^{\frac{p_{2}}{p_{1}}}\cdots dx_{n}\right\}^{\frac{1}{p_{n}}}<\infty
    \]
    with the usual modifications made when $p_{i}=\infty$ for some $i\in\{1,\cdots,n\}$. 
\end{definition}
    For any exponent vector $\Vec{p}:=(p_{1}, \cdots,p_{n})$, let
    \[
    p_{-}:=\min\{p_{1},\dots,p_{n}\}\quad{\rm and}\quad p_{+}:=\max\{p_{1},\dots,p_{n}\}.
    \] 
   Similarly, we denote  \[
    p^k_{-}:=\min\{p^k_{1},\dots,p^k_{n}\}\quad{\rm and}\quad p^k_{+}:=\max\{p^k_{1},\dots,p^k_{n}\},\]
    for $k=1,2,\dots,m$.
From the definition of the mixed-norm Lebesgue space, we know that $L^{\Vec{p}}(\mathbb R^{n})$ with $\Vec{p}\in(0,\infty)^{n}$ is a ball quasi-Banach function space. By \cite{HLYY}, Assumption~\ref{ass2.7} holds true when $s\in(0,1]$, $\theta\in(0,\min\{s,p_{-}\})$ and $X:=L^{\Vec{p}}(\mathbb R^{n})$ with $\Vec{p}\in(0,\infty)^{n}$. Meanwhile, the 
Assumption~\ref{ass2.8} holds true when $X:=L^{\Vec{p}}(\mathbb R^{n})$ with $\Vec{p}\in(0,\infty)^{n}$, $s\in(0,p_{-})$ and $q\in(\max\{1,p_{+}\},\infty]$. Furthermore, $L^{\Vec{p}}(\mathbb R^{n})$ has an absolutely continuous quasi-norm. 
Moreover, by the definition of mixed-norm Lebesgue spaces and
the classical H\"{o}lder inequality on Lebesgue spaces,
a simple computation yields the following proposition. Also see \cite[Corollary 5.2]{HN}.

\begin{proposition}\label{mixholder}
Let $0<p_i, p_i^k<\infty$ such that
$$\frac{1}{p_i}=\sum_{k=1}^m\frac{1}{p_i^k},$$
where $i=1,\dots,n $ and $k=1,\dots, m.$
Then for all $f_k\in L^{\Vec{p_k}}(\mathbb R^{n})$ we have
$$\left\|\prod_{k=1}^mf_k\right\|_{L^{\Vec{p}}}\lesssim \prod_{k=1}^m\|f_k\|_{L^{\Vec{p_k}}},$$
where $\Vec{p_k}:=(p^k_{1}, \cdots,p^k_{n})$.
\end{proposition}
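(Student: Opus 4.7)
The plan is to prove this mixed-norm H\"{o}lder inequality by iterating the classical one-variable H\"{o}lder inequality in each coordinate $x_1,x_2,\ldots,x_n$ in turn. The hypothesis $1/p_i=\sum_{k=1}^m 1/p_i^k$ is equivalent to $\sum_{k=1}^m p_i/p_i^k = 1$ for every $i=1,\ldots,n$, which is exactly the condition needed to apply H\"{o}lder's inequality in the variable $x_i$ with the $m$-tuple of exponents $\{p_i^k/p_i\}_{k=1}^m$ whose reciprocals sum to one.

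First I would fix $(x_2,\ldots,x_n)$ and rewrite the innermost integrand as
$$\prod_{k=1}^m|f_k(x_1,\ldots,x_n)|^{p_1} = \prod_{k=1}^m\bigl(|f_k(x_1,\ldots,x_n)|^{p_1^k}\bigr)^{p_1/p_1^k}.$$
Applying classical H\"{o}lder in $x_1$ with weights $\{p_1^k/p_1\}$ and then raising to the $1/p_1$ power yields
$$\left[\int_{\mathbb R}\Bigl|\prod_{k=1}^m f_k\Bigr|^{p_1}\,dx_1\right]^{1/p_1}\le \prod_{k=1}^m\left[\int_{\mathbb R}|f_k|^{p_1^k}\,dx_1\right]^{1/p_1^k}.$$
The right-hand side, as a function of $(x_2,\ldots,x_n)$, is a product of the functions $g_k(x_2,\ldots,x_n):=\|f_k(\cdot,x_2,\ldots,x_n)\|_{L^{p_1^k}(\mathbb R)}$.

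Next I would raise both sides to the power $p_2$ and integrate in $x_2$. The resulting integrand is $\prod_k g_k(x_2,\ldots,x_n)^{p_2}$, and since $1/p_2=\sum_k 1/p_2^k$, the identical H\"{o}lder step in the variable $x_2$ with exponents $\{p_2^k/p_2\}$ upper bounds this by a product of $L^{p_2^k}(dx_2)$-norms of the $g_k$. Iterating this one-variable argument through the remaining coordinates $x_3,\ldots,x_n$ produces the claimed estimate
$$\left\|\prod_{k=1}^m f_k\right\|_{L^{\Vec p}}\le \prod_{k=1}^m\|f_k\|_{L^{\Vec{p_k}}}.$$

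The only real obstacle is bookkeeping: keeping the iterated norms and exponents aligned through the $n$ successive H\"{o}lder steps, so that at each stage the hypothesis $1/p_i=\sum_{k=1}^m 1/p_i^k$ is invoked with the correct indices. Once the first coordinate step has been done, all subsequent steps are formally identical with $x_1$ replaced by $x_j$, and no ingredient beyond classical H\"{o}lder is required; in fact the implicit constant $\lesssim$ in the statement can be taken to equal $1$.
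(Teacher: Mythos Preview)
Your proposal is correct and is exactly the approach the paper indicates: the paper does not spell out a proof but simply remarks that ``by the definition of mixed-norm Lebesgue spaces and the classical H\"{o}lder inequality on Lebesgue spaces, a simple computation yields'' the proposition (citing also \cite[Corollary~5.2]{HN}). Your coordinate-by-coordinate iteration of the one-variable H\"{o}lder inequality is precisely that simple computation, and as you note the constant can be taken to be $1$.
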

 
Therefore, Theorems~\ref{s1th1}, \ref{s1th2}, \ref{s1th3} and \ref{s1th4} hold true when $X= L^{\Vec{p}}(\mathbb R^{n})$, 
$X_1={L^{\Vec{p_1}}}(\mathbb R^{n})$, $\dots,$
$X_m={L^{\Vec{p_m}}}(\mathbb R^{n})$ and $p=p_1=\cdots=p_m=1$.
Denote $H^{\Vec{p}}(\mathbb R^{n})$ the mixed-norm Hardy space as in Definition \ref{hx} with $X$ replaced by $L^{\Vec{p}}(\mathbb R^{n})$, which can be found in \cite{HLYY,HY}.

\begin{theorem}\label{th510} 
Let $0<p_i, p_i^k<\infty$ be the same as in Proposition \ref{mixholder},
where $k=1,2,\ldots,m$. Let $T$ be a multilinear Calder\'on--Zygmund operator associated to the kernel $K$ that satisfies (\ref{s1e1}) for all \( |\alpha| \leq N \).
If $\frac{mn}{n+N+1}<p^k_{-}<\infty$, then $T$ extends to a bounded operator from
$H^{\Vec{p_1}}\times \cdots \times H^{\Vec{p_m}}$ into $L^{\Vec{p}}$.
\end{theorem}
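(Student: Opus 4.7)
The plan is to derive this statement as a direct corollary of Theorem \ref{s1th1} by instantiating the latter with $X^{p_k}_k := L^{\vec{p_k}}(\mathbb R^{n})$ for $k=1,\ldots,m$ and $X^p := L^{\vec p}(\mathbb R^{n})$ (the auxiliary exponents $p, p_1, \ldots, p_m$ appearing inside Theorem \ref{s1th1} all being taken equal to $1$). Most of the verification is already recorded in the discussion preceding this theorem: each mixed-norm Lebesgue space $L^{\vec{q}}(\mathbb R^{n})$ with $\vec q\in(0,\infty)^n$ is a ball quasi-Banach function space, Assumption \ref{ass2.7} holds whenever $s\in(0,1]$ and $\theta\in(0,\min\{s,q_-\})$, and Assumption \ref{ass2.8} holds whenever $s\in(0,q_-)$ with $q\in(\max\{1,q_+\},\infty]$. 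The H\"older inequality required by (\ref{holder}) is exactly Proposition \ref{mixholder}.

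What remains is to exhibit compatible parameters. On the input side I would set $\theta := \frac{mn}{n+N+1}$, which by the standing hypothesis satisfies $\theta < p_-^k$ for every $k$ and lies in $[0,1)$ (the latter being implicit in the range condition of Theorem \ref{s1th1}). Then I would pick any $s$ in the interval
\[
\bigl(\theta,\ \min\{1,\,p_-^1,\dots,p_-^m\}\bigr),
\]
which is non-empty precisely because $\theta<1$ and $\theta<p_-^k$ for each $k$. This single pair $(\theta,s)$ simultaneously verifies Assumption \ref{ass2.7} and Assumption \ref{ass2.8} uniformly for $L^{\vec{p_1}}(\mathbb R^n),\ldots,L^{\vec{p_m}}(\mathbb R^n)$. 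For the output space $L^{\vec p}(\mathbb R^n)$, since $p_->0$ by construction, I can simply choose any $\theta_p<s_p<\min\{1,p_-\}$, which immediately verifies both assumptions for $L^{\vec p}$.

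With every hypothesis of Theorem \ref{s1th1} in place, an application of that theorem yields the boundedness $T: H^{\vec{p_1}} \times \cdots \times H^{\vec{p_m}} \to L^{\vec p}$. The main (and essentially only) obstacle is ensuring that the parameter selection is internally compatible, and this reduces exactly to the hypothesis $\frac{mn}{n+N+1}<p_-^k$ together with $\frac{mn}{n+N+1}<1$. No further technical work is needed: the concrete kernel estimates, atomic decompositions, and maximal-function bounds have all been absorbed into the abstract framework of Theorem \ref{s1th1}, so the remainder of the argument is bookkeeping.
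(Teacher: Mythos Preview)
Your proposal is correct and follows exactly the approach the paper takes: the paper deduces Theorem \ref{th510} directly from Theorem \ref{s1th1} after recording that mixed-norm Lebesgue spaces are ball quasi-Banach function spaces satisfying Assumptions \ref{ass2.7} and \ref{ass2.8} and that the H\"older inequality of Proposition \ref{mixholder} supplies (\ref{holder}). Your explicit parameter selection (taking $\theta=\frac{mn}{n+N+1}$ and then choosing $s$ in $(\theta,\min\{1,p_-^1,\dots,p_-^m\})$, and similarly for $\theta_p,s_p$) is exactly the bookkeeping the paper leaves implicit.
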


\begin{theorem}\label{th511} Given \( p_i, p_i^k\) as in Theorem \ref{th510}.
Let $T$ be a multilinear Calder\'on--Zygmund operator associated to the kernel $K$ that satisfies (\ref{s1e1}) for all \( |\alpha| \leq N \).
Assume further that
\begin{align}\label{s1c4}
\int_{(\mathbb R^n)^m}x^\alpha T(a_1,a_2,\cdots,a_m)(x)dx=0,
\end{align}
for all $|\alpha|\le \widetilde N$ and $(L^{\Vec{p_k}},q,N)$-atoms $a_k$,
$k=1,2,\cdots,m$.
If $\frac{mn}{n+N+1}\vee\frac{mn}{N-\widetilde N}<p^k_{-}<\infty$
and $\frac{n}{n+\widetilde N+1}<p_{-}<\infty$, then $T$ extends to a bounded operator from
$H^{\Vec{p_1}}\times \cdots \times H^{\Vec{p_m}}$ into $H^{\Vec{p}}$.
\end{theorem}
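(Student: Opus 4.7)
The plan is to deduce Theorem \ref{th511} directly from Theorem \ref{s1th2} by taking $X^p = L^{\Vec{p}}(\mathbb{R}^n)$ and $X_k^{p_k} = L^{\Vec{p_k}}(\mathbb{R}^n)$ with $p = p_1 = \cdots = p_m = 1$. To do this I need to check that each member of this family is a ball quasi-Banach function space satisfying Assumptions \ref{ass2.7} and \ref{ass2.8}, that the H\"older-type inequality (\ref{holder}) holds, that the vanishing moment condition (\ref{s1c3}) holds, and that the parameter windows for $\theta$, $s$, $\theta_p$, $s_p$ in Theorem \ref{s1th2} can be attained under the exponent hypotheses of Theorem \ref{th511}.

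First, the preliminary discussion in Subsection \ref{s5s4} already records that $L^{\Vec{p}}(\mathbb{R}^n)$ with $\Vec{p} \in (0,\infty)^n$ is a ball quasi-Banach function space, that Assumption \ref{ass2.7} holds with any $s \in (0,1]$ and $\theta \in (0, \min\{s, p_-\})$, and that Assumption \ref{ass2.8} holds with $s \in (0, p_-)$ and $q \in (\max\{1, p_+\}, \infty]$. Proposition \ref{mixholder} provides exactly the H\"older inequality (\ref{holder}) required, and the vanishing moment hypothesis (\ref{s1c4}) in the statement is (\ref{s1c3}) specialized to $(L^{\Vec{p_k}}, q, N)$-atoms, so it transfers verbatim.

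The calibration step is the heart of the plan. Because $\frac{mn}{n+N+1} \vee \frac{mn}{N - \widetilde{N}} < p^k_-$ for every $k$, I can choose
\[
\theta \in \left[\tfrac{mn}{n+N+1} \vee \tfrac{mn}{N - \widetilde{N}},\, \min_{1 \le k \le m} p^k_-\right) \cap [0,1)
\]
and then pick $s \in (\theta, 1)$ with $s \le \min_k p^k_-$, so that Assumption \ref{ass2.7} holds for each $L^{\Vec{p_k}}$ simultaneously with the pair $(\theta, s)$, and Assumption \ref{ass2.8} holds for each of them with the same $s$. Similarly, because $\frac{n}{n+\widetilde{N}+1} < p_-$, I can select $\theta_p \in [\frac{n}{n+\widetilde{N}+1}, \min\{p_-, 1\})$ and $s_p \in (\theta_p, \min\{p_-,1\})$, putting $L^{\Vec{p}}$ in compliance with Assumptions \ref{ass2.7} and \ref{ass2.8}. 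With every hypothesis of Theorem \ref{s1th2} verified, an immediate application produces the desired mapping $T: H^{\Vec{p_1}} \times \cdots \times H^{\Vec{p_m}} \to H^{\Vec{p}}$.

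The only real obstacle is the joint feasibility of the parameter choices: Assumption \ref{ass2.7} demands $\theta$ strictly below $p^k_-$ for \emph{every} $k$ at once, while Theorem \ref{s1th2} forces $\theta$ above the explicit threshold $\frac{mn}{n+N+1} \vee \frac{mn}{N-\widetilde{N}}$, and one further needs $s$ to lie in the gap $(\theta, 1) \cap (0, \min_k p^k_-)$. The quantitative hypotheses of Theorem \ref{th511} are exactly what is needed to guarantee these intervals are nonempty, and the analogous observation on the target side governs $\theta_p$ and $s_p$; thus no structural work beyond that of Sections 3 and 4 is required.
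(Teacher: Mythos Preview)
Your proposal is correct and follows exactly the approach the paper takes: the paper does not give a separate proof of Theorem \ref{th511} but simply records that it is the specialization of Theorem \ref{s1th2} to $X=L^{\Vec{p}}$, $X_k=L^{\Vec{p_k}}$ with $p=p_1=\cdots=p_m=1$, after verifying Assumptions \ref{ass2.7}, \ref{ass2.8} and the H\"older inequality via Proposition \ref{mixholder}. Your explicit calibration of $\theta,s,\theta_p,s_p$ is more detailed than what the paper writes, but the underlying argument is the same.
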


\begin{theorem}\label{th512}\quad
Let all the notation be as in Theorem \ref{th510}
and let $T$ be a multilinear Calder\'on--Zygmund operator associated to the kernel $K$ that satisfies (\ref{s1e1}).
If $\frac{mn}{n+N+1}<p^k_{-}<\infty$, then $T_\ast$ extends to a bounded operator from
$H^{\Vec{p_1}}\times \cdots \times H^{\Vec{p_m}}$ into $L^{\Vec{p}}$.
\end{theorem}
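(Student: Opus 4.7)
The plan is to obtain Theorem \ref{th512} as a direct specialization of the abstract boundedness result in Theorem \ref{s1th3} to the concrete class of mixed-norm Lebesgue spaces. As indicated in the discussion preceding Theorem \ref{th510}, I take $X^p := L^{\Vec p}(\mathbb R^n)$ and $X_k^{p_k} := L^{\Vec{p_k}}(\mathbb R^n)$ with $p=p_1=\cdots=p_m=1$, so that the $r$-convexification notation in the abstract theorem reduces to the mixed-norm Lebesgue spaces themselves.

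First I would verify the structural hypotheses. From the discussion in Subsection \ref{s5s4}, for every vector $\Vec{p_k}\in(0,\infty)^n$ the space $L^{\Vec{p_k}}(\mathbb R^n)$ is a ball quasi-Banach function space; moreover, it satisfies Assumption \ref{ass2.7} whenever $s\in(0,1]$ and $\theta\in(0,\min\{s,p^k_{-}\})$, and it satisfies Assumption \ref{ass2.8} for admissible $s$ and $q$. The analogous statement holds for $L^{\Vec p}(\mathbb R^n)$ with its own pair $\theta_p<s_p$. Since the hypothesis $\frac{mn}{n+N+1}<p^k_{-}$ is imposed for every $k=1,\dots,m$, I can choose $s\in(0,1)$ together with $\theta\in[\frac{mn}{n+N+1},1)\cap(0,\min_k p^k_{-})$ simultaneously, and this intersection is nonempty under the stated condition.

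Next, the multilinear H\"older inequality (\ref{holder}) required by Theorem \ref{s1th3} is exactly the content of Proposition \ref{mixholder}, so that hypothesis is available for free. With all three ingredients in place---the ball quasi-Banach function space structure, Assumptions \ref{ass2.7} and \ref{ass2.8}, and the H\"older inequality---a direct application of Theorem \ref{s1th3} yields
$$T_\ast: H^{\Vec{p_1}}\times\cdots\times H^{\Vec{p_m}}\to L^{\Vec p},$$
which is the desired conclusion.

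The only point requiring care is the simultaneous compatibility of the exponents $\theta$ and $s$ across the different spaces $L^{\Vec{p_k}}$ and $L^{\Vec p}$. However, since Assumption \ref{ass2.7} for mixed-norm Lebesgue spaces admits a full range of admissible pairs $(\theta,s)$ governed only by $p^k_{-}$, and the critical threshold $\frac{mn}{n+N+1}$ lies strictly below every $p^k_{-}$ by hypothesis, a common valid choice always exists. No additional kernel estimate beyond (\ref{s1e1}) is invoked, and no absolute continuity of the quasi-norm is required, since Theorem \ref{s1th3} already incorporates Lemma \ref{embed} through the proof of Theorem \ref{s1th1}. Consequently the main obstacle is merely verifying the compatibility of parameters; the substantive analytic work has been absorbed into the abstract Theorem \ref{s1th3} and into Proposition \ref{mixholder}.
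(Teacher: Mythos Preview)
Your proposal is correct and follows essentially the same approach as the paper: the paper derives Theorem \ref{th512} directly from the abstract Theorem \ref{s1th3} after noting in Subsection \ref{s5s4} that mixed-norm Lebesgue spaces are ball quasi-Banach function spaces satisfying Assumptions \ref{ass2.7} and \ref{ass2.8} and that Proposition \ref{mixholder} supplies the required H\"older inequality. Your parameter-compatibility check is slightly more explicit than what the paper writes, but the argument is the same.
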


Noting that $h^{\Vec{p}}(\mathbb R^{n})$ is the local Hardy--Morrey space as in Remark \ref{local} with $X$ replaced by $L^{\Vec{p}}(\mathbb R^{n})$. Then we get the boundedness result on multilinear pseudo-differential operators as follows.

\begin{theorem}\label{th510-1} 
Let $0<p_i, p_i^k<\infty$ be the same as in Proposition \ref{mixholder},
where $k=1,2,\ldots,m$. Let $T_\sigma$ be a multilinear pseudo-differential operator with the symbol $\sigma\in MB_{1,0}^0$.
If $0<p^k_{-}<\infty$, then $T_\sigma$ extends to a bounded operator from
$h^{\Vec{p_1}}\times \cdots \times h^{\Vec{p_m}}$ into $L^{\Vec{p}}$.
\end{theorem}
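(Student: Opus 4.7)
The plan is to recognize that Theorem~\ref{th510-1} is nothing more than the concrete instantiation of the abstract Theorem~\ref{s1th4} when the ball quasi-Banach function spaces are taken to be mixed-norm Lebesgue spaces, with $X^p=L^{\vec p}$ and $X_k^{p_k}=L^{\vec{p_k}}$ and $p=p_1=\cdots=p_m=1$. Hence the proof reduces to verifying the three structural hypotheses of Theorem~\ref{s1th4} in this setting and then invoking it. The main strategy is bookkeeping of parameters; no new analytic estimate is needed.

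First, I would record that every $L^{\vec q}(\mathbb R^n)$ with $\vec q\in(0,\infty)^n$ is a ball quasi-Banach function space in the sense of Definition~\ref{def2.1}. Second, I would verify Assumptions~\ref{ass2.7} and~\ref{ass2.8}, relying on the facts already recalled from \cite{HLYY} in the preceding paragraphs of Subsection~\ref{s5s4}: Assumption~\ref{ass2.7} holds for any $s\in(0,1]$ and any $\theta\in(0,\min\{s,p^k_-\})$, while Assumption~\ref{ass2.8} holds for $s\in(0,p^k_-)$ and $q\in(\max\{1,p^k_+\},\infty]$; the same assertions hold for $\vec p$ in place of $\vec{p_k}$ since $p_-\ge \min_k p^k_-/m>0$ and $p_+<\infty$. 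Third, I would invoke Proposition~\ref{mixholder}, which is precisely the H\"older inequality \eqref{holder} with $p=p_1=\cdots=p_m=1$ in the abstract formulation.

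The only point that requires a moment of care is choosing the parameters $\theta,s$ uniformly over $k=1,\dots,m$ so as to satisfy simultaneously Assumptions~\ref{ass2.7} and~\ref{ass2.8} and the smallness condition $\theta\in[\frac{mn}{n+N+1},1)$ implicit in the hypotheses inherited from Theorem~\ref{s1th1}. I would fix
\[
s\in\bigl(0,\min\{1,\min_{1\le k\le m}p^k_-\}\bigr)
\]
close to $\min_k p^k_-$, choose $\theta\in(0,s)$, and then select the integer $N$ in the atomic decomposition of $h^{\vec{p_k}}$ large enough to guarantee $\frac{mn}{n+N+1}\le\theta$; this is possible since $N$ is a free parameter in Definition~\ref{def2.6} and Remark~\ref{localhx}. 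The analogous selection for $L^{\vec p}$ yields a valid pair $(\theta_p,s_p)$.

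With the three hypotheses verified, a direct application of Theorem~\ref{s1th4} produces the desired boundedness
\[
T_\sigma:h^{\vec{p_1}}\times\cdots\times h^{\vec{p_m}}\longrightarrow L^{\vec p},
\]
completing the proof. I do not anticipate any substantive obstacle: the hard analytic work (the atomic estimates for $T_\sigma$, the treatment of the non-vanishing part $II$ via the pointwise bound from \cite{TZ}, and the passage from finite to infinite atomic decompositions via the embedding Lemma~\ref{embed}) has already been carried out in the proof of Theorem~\ref{s1th4}. The only nontrivial verification at the concrete level is the Fefferman--Stein-type vector-valued inequality embedded in Assumption~\ref{ass2.7} for mixed-norm Lebesgue spaces, but this is imported from \cite{HLYY}.
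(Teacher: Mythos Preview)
Your proposal is correct and follows essentially the same approach as the paper: the paper derives Theorem~\ref{th510-1} simply by observing (in the paragraph preceding the statements in Subsection~\ref{s5s4}) that $L^{\vec p}$ and $L^{\vec{p_k}}$ satisfy Assumptions~\ref{ass2.7}, \ref{ass2.8} and the H\"older inequality~\eqref{holder}, and then invoking Theorem~\ref{s1th4}. Your additional care about choosing $\theta,s$ uniformly in $k$ and taking $N$ large enough is implicit in the paper's treatment but not spelled out there.
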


\subsection{(Local) Hardy--Lorentz spaces}\label{s5s5}
The Lorentz space was first introduced by Lorentz in \cite{Lo}. We refer the reader to \cite{SHYY,Saw,WYY} for more studies on Lorentz spaces.
\begin{definition}
    The \emph{Lorentz space} $L^{p,q}(\mathbb R^{n})$ is defined to be the set of all measurable functions $f$ on $\mathbb R^{n}$ such that, when $p,q\in(0,\infty)$,
    \[
    \|f\|_{L^{p,q}}:=\left\{  \int_{0}^{\infty}\left[ t^{\frac{1}{p}}f^{*}(t) \right]^{q}\frac{dt}{t} \right\}^{\frac{1}{q}}<\infty,
    \]
    and, when $p\in(0,\infty)$ and $q=\infty$,
    \[
    \|f\|_{L^{p,q}}:=\sup_{t\in(0,\infty)}t^{\frac{1}{p}}f^{*}(t)<\infty,
    \]
    where $f^{*}$ denotes the decreasing rearrangement of $f$, which is defined by setting, for any $t\in[0,\infty)$,
    \[
    f^{*}(t):=\inf\{s\in(0,\infty)\colon \mu_{f}(s)\leq t\}
    \]
    with $\mu_{f}(s):=\left\vert  \left\{  x\in\mathbb R^{n}\colon\vert f(x)\vert>s \right\} \right\vert$.
\end{definition}

It is known  in \cite{SHYY} that $L^{p,q}(\mathbb R^{n})$ is a ball Banach function space when $p,q\in(1,\infty)$ or $p\in(1,\infty)$ and $q=\infty$
and is a ball quasi-Banach function space when $p,q\in(0,\infty)$ or $p\in(0,\infty)$ and $q=\infty$. 
From \cite[Theorem 2.3(iii)]{CRS}, Assumption~\ref{ass2.7} holds true when  $s\in(0,1]$ and $\theta\in(0,\min\{s,p,q\})$. 
If $X:=L^{p,r}(\mathbb R^{n})$ with $p\in(0,\infty)$ and $r\in(0,\infty]$. 
By \cite[Theorem 1.4.16]{G} and \cite[Remark 2.7(d)]{WYY},
Assumption~\ref{ass2.8} holds true when $s\in(0,\min\{p,r\})$ and $q\in(\max\{1,p,r\},\infty]$. 

Moreover, we also need the following H\"{o}lder inequality on Lorentz spaces in \cite[Section 4.3.2]{HN} and \cite[Theorem 4.5]{Hu}.

\begin{proposition}\label{mixlorentz}
Let $0<p, q, p_k,q_k<\infty$ such that
$$
\frac{1}{p}=\sum_{k=1}^m\frac{1}{p_k},\quad
\frac{1}{q}=\sum_{k=1}^m\frac{1}{q_k},
$$
where $k=1,\dots, m.$
Then for all $f_k\in L^{p,q}(\mathbb R^{n})$ we have
$$\left\|\prod_{k=1}^mf_k\right\|_{L^{p,q}}\lesssim \prod_{k=1}^m\|f_k\|_{L^{p_k,q_k}}.$$
\end{proposition}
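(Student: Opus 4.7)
The plan is to reduce the multilinear Lorentz H\"older inequality to a generalized H\"older inequality on $((0,\infty), ds/s)$ by passing to decreasing rearrangements. Two ingredients suffice: a pointwise bound for the rearrangement of a product, and the classical scalar H\"older inequality applied in the $q$-variable.

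First I would establish the rearrangement inequality
$$
(f_{1}\cdots f_{m})^{*}(t)\leq \prod_{k=1}^{m} f_{k}^{*}(t/m), \qquad t>0.
$$
This follows by iterating the classical two-factor bound $(fg)^{*}(t_{1}+t_{2})\leq f^{*}(t_{1})g^{*}(t_{2})$, which in turn comes from the inclusion $\{|fg|>\alpha\beta\}\subset\{|f|>\alpha\}\cup\{|g|>\beta\}$ applied at $\alpha=f^{*}(t_{1})$, $\beta=g^{*}(t_{2})$; setting $t_{1}=\cdots=t_{m}=t/m$ yields the claim.

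Next I would substitute this bound into the definition of $\|\cdot\|_{L^{p,q}}$, change variables $s=t/m$, and use $1/p=\sum_{k}1/p_{k}$ to rewrite
\begin{align*}
\|f_{1}\cdots f_{m}\|_{L^{p,q}}^{q}
&\leq \int_{0}^{\infty} t^{q/p}\prod_{k=1}^{m}[f_{k}^{*}(t/m)]^{q}\,\frac{dt}{t}\\
&= m^{q/p}\int_{0}^{\infty}\prod_{k=1}^{m}\bigl[s^{1/p_{k}}f_{k}^{*}(s)\bigr]^{q}\,\frac{ds}{s}.
\end{align*}
Applying the classical multilinear H\"older inequality on the measure space $((0,\infty),ds/s)$ with exponents $r_{k}=q_{k}/q$, which satisfy $\sum_{k}1/r_{k}=q\sum_{k}1/q_{k}=1$, then gives
$$
\int_{0}^{\infty}\prod_{k=1}^{m}\bigl[s^{1/p_{k}}f_{k}^{*}(s)\bigr]^{q}\,\frac{ds}{s}\leq \prod_{k=1}^{m}\|f_{k}\|_{L^{p_{k},q_{k}}}^{q}.
$$
Taking $q$-th roots and absorbing the factor $m^{1/p}$ into $\lesssim$ completes the estimate.

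The argument is essentially bookkeeping rather than conceptual, and no single step is a genuine obstacle; the only point requiring care is that the exponents $r_{k}=q_{k}/q$ in the final H\"older step must satisfy $r_{k}\geq 1$, which is guaranteed by $1/q=\sum_{k}1/q_{k}\geq 1/q_{k}$, i.e. $q_{k}\geq q$. An alternative route is to induct on $m$ using the bilinear case (O'Neil's inequality) applied to intermediate Lorentz indices $(\widetilde p_{\ell},\widetilde q_{\ell})$ with $1/\widetilde p_{\ell}=\sum_{k\leq\ell}1/p_{k}$ and analogously for $\widetilde q_{\ell}$; this avoids the iterated rearrangement bound but introduces extra parameters, so I would favor the direct argument above.
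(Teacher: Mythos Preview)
Your argument is correct and is essentially the standard proof of the Lorentz--space H\"older inequality: the iterated rearrangement bound $(f_{1}\cdots f_{m})^{*}(t)\leq\prod_{k}f_{k}^{*}(t/m)$ followed by the scalar H\"older inequality on $((0,\infty),ds/s)$ with exponents $q_{k}/q$. Every step checks out, including the observation that $q_{k}\geq q$ guarantees the exponents are at least $1$.

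The paper does not give its own proof of this proposition at all; it simply cites it as a known result (Hunt's 1966 paper on $L(p,q)$ spaces and a survey reference). So there is nothing to compare in terms of technique---you have supplied what the paper omits. Your direct proof is exactly the classical argument behind the cited references, and the inductive alternative you mention (via O'Neil's bilinear inequality) is also standard and would work equally well.
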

 
Therefore, Theorems~\ref{s1th1}, \ref{s1th2}, \ref{s1th3} and \ref{s1th4} hold true when $X= L^{1,q/p}(\mathbb R^{n})$, 
$X_1={L^{1,q_1/p_1}}(\mathbb R^{n})$, $\dots,$
$X_m={L^{1,q_m/p_m}}(\mathbb R^{n})$.
Denote $H^{p,q}(\mathbb R^{n})$ the Hardy--Lorentz space as in Definition \ref{hx} with $X$ replaced by $L^{p,q}(\mathbb R^{n})$. See \cite{SHYY,WYY}
for more details on the Hardy--Lorentz space.

\begin{theorem}\label{th513} 
Let $p,q, p_k, q_k$ be the same as in Proposition \ref{mixlorentz},
where $k=1,2,\ldots,m$. Let $T$ be a multilinear Calder\'on--Zygmund operator associated to the kernel $K$ that satisfies (\ref{s1e1}) for all \( |\alpha| \leq N \).
If $\frac{mn}{n+N+1}<{p_k\wedge q_k}<\infty$, then $T$ extends to a bounded operator from
$H^{p_1,q_1}\times \cdots \times H^{p_m,q_m}$ into $L^{p,q}$.
\end{theorem}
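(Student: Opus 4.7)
The plan is to apply Theorem \ref{s1th1} with appropriate underlying ball quasi-Banach function spaces whose convexifications recover the given Lorentz spaces. Specifically, I would set $X_k := L^{1,q_k/p_k}(\mathbb R^n)$ and $X := L^{1,q/p}(\mathbb R^n)$, so that using the identity $(|f|^r)^*(t) = (f^*(t))^r$ together with a change of variables in the Lorentz quasi-norm one verifies $X_k^{p_k} = L^{p_k,q_k}(\mathbb R^n)$ and $X^p = L^{p,q}(\mathbb R^n)$. Consequently $H_{X_k^{p_k}} = H^{p_k,q_k}(\mathbb R^n)$, so that once the hypotheses of Theorem \ref{s1th1} are checked for this choice of $X, X_1, \dots, X_m$, the conclusion of Theorem \ref{th513} follows immediately.

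The next step is to verify the hypotheses. Each $L^{p_k,q_k}$ is a ball quasi-Banach function space, as recalled at the start of the subsection. Using the vector-valued maximal inequality on Lorentz spaces via \cite[Theorem 2.3(iii)]{CRS}, Assumption \ref{ass2.7} holds for $L^{p_k,q_k}$ whenever $s \in (0,1]$ and $\theta \in (0, \min\{s, p_k, q_k\})$, while the duality identification $[(L^{p_k,q_k})^{1/s}]' = L^{(p_k/s)',(q_k/s)'}$ combined with the boundedness of $\mathcal M$ on Lorentz spaces yields Assumption \ref{ass2.8} for the same $s \in (0, p_k \wedge q_k)$. The same references applied to $X^p = L^{p,q}$ give Assumptions \ref{ass2.7} and \ref{ass2.8} with auxiliary parameters $\theta_p < s_p < 1$ (which exist since $p,q > 0$). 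Finally, Hölder's inequality (\ref{holder}) is precisely the content of Proposition \ref{mixlorentz}.

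The only step requiring care, and the one where the numerical hypothesis is invoked, is the simultaneous choice of the parameters: we need $\theta \in [\frac{mn}{n+N+1}, 1)$ (the index range in Theorem \ref{s1th1}) together with $\theta < s$ and $s < \min\{1, \min_{1 \le k \le m}(p_k \wedge q_k)\}$ (the range required for Assumption \ref{ass2.7} to hold simultaneously on all $X_k^{p_k}$). The hypothesis $\frac{mn}{n+N+1} < p_k \wedge q_k$ for every $k$ is exactly what keeps the interval $\bigl(\tfrac{mn}{n+N+1}, 1 \wedge \min_k (p_k \wedge q_k)\bigr)$ non-empty, so one first picks $s$ in this interval and then sets $\theta := \tfrac{mn}{n+N+1}$. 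With these choices all the hypotheses of Theorem \ref{s1th1} are satisfied for the spaces $X, X_1, \dots, X_m$ introduced above, and the desired boundedness $T : H^{p_1,q_1} \times \cdots \times H^{p_m,q_m} \to L^{p,q}$ follows at once.
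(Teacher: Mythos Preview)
Your proposal is correct and follows essentially the same route as the paper: choose $X_k=L^{1,q_k/p_k}$, $X=L^{1,q/p}$ so that $X_k^{p_k}=L^{p_k,q_k}$ and $X^{p}=L^{p,q}$, verify Assumptions~\ref{ass2.7} and~\ref{ass2.8} via \cite[Theorem 2.3(iii)]{CRS} and the known duality/boundedness of $\mathcal M$ on Lorentz spaces, invoke Proposition~\ref{mixlorentz} for (\ref{holder}), and then use the hypothesis $\frac{mn}{n+N+1}<p_k\wedge q_k$ to select admissible $\theta,s$ before applying Theorem~\ref{s1th1}. The paper carries out exactly this verification in the discussion preceding the statement, so your argument coincides with the intended one.
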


\begin{theorem}\label{th514} Given \(p, q, p_k, q_k\) as in Theorem \ref{th513}.
Let $T$ be a multilinear Calder\'on--Zygmund operator associated to the kernel $K$ that satisfies (\ref{s1e1}) for all \( |\alpha| \leq N \).
Assume further that
\begin{align}\label{s1c4}
\int_{(\mathbb R^n)^m}x^\alpha T(a_1,a_2,\cdots,a_m)(x)dx=0,
\end{align}
for all $|\alpha|\le \widetilde N$ and $(L^{\Vec{p_k}},q,N)$-atoms $a_k$,
$k=1,2,\cdots,m$.
If $\frac{mn}{n+N+1}\vee\frac{mn}{N-\widetilde N}<{p_k\wedge q_k}<\infty$
and $\frac{n}{n+\widetilde N+1}<p\wedge q<\infty$, then $T$ extends to a bounded operator from
$H^{p_1,q_1}\times \cdots \times H^{p_m,q_m}$ into $H^{p,q}$.
\end{theorem}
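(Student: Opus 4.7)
The plan is to deduce Theorem \ref{th514} directly from the abstract Theorem \ref{s1th2} by specializing $X_k^{p_k} := L^{p_k,q_k}(\mathbb R^n)$ and $X^p := L^{p,q}(\mathbb R^n)$. Since every tool I need is already recorded earlier in the excerpt, the argument reduces to a careful verification of hypotheses rather than any new analytic input on the operator $T$ itself.

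First I would collect the structural facts assembled in Subsection \ref{s5s5}. Each Lorentz space $L^{r,s}$ with $r,s\in(0,\infty)$ is a ball quasi-Banach function space; Assumption \ref{ass2.7} holds for $L^{r,s}$ provided $\theta \in (0, \min\{s', r, s\})$ for some companion $s' \in (\theta, 1]$; Assumption \ref{ass2.8} holds once this same $s'$ also satisfies $s' < \min\{r, s\}$ and $q$ is taken large; and the multilinear Hölder inequality \eqref{holder} is precisely Proposition \ref{mixlorentz} applied with $X^p = L^{p,q}$ and $X_k^{p_k} = L^{p_k,q_k}$.

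Next I would fix compatible parameters. Pick $\theta := \frac{mn}{n+N+1} \vee \frac{mn}{N-\widetilde N}$; the hypothesis $\theta < \bigwedge_{k=1}^{m}(p_k \wedge q_k)$ lets me choose $s \in (\theta, 1)$ with $s < \bigwedge_{k}(p_k \wedge q_k)$, which activates both Assumption \ref{ass2.7} and Assumption \ref{ass2.8} simultaneously for every $L^{p_k,q_k}$ with this common pair. In parallel, set $\theta_p := \frac{n}{n+\widetilde N+1} < p\wedge q$ and pick $s_p \in (\theta_p, 1) \cap (0, p\wedge q)$ to play the same role for $L^{p,q}$. The vanishing-moment hypothesis \eqref{s1c4} is exactly \eqref{s1c3} of Theorem \ref{s1th2}, and the numerical thresholds $\theta \geq \frac{mn}{n+N+1} \vee \frac{mn}{N-\widetilde N}$ and $\theta_p \geq \frac{n}{n+\widetilde N + 1}$ hold by construction. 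Applying Theorem \ref{s1th2} then yields $T: H_{X_1^{p_1}}\times\cdots\times H_{X_m^{p_m}} \to H_{X^p}$, which is precisely $T: H^{p_1,q_1} \times \cdots \times H^{p_m,q_m} \to H^{p,q}$.

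The main obstacle I anticipate is purely parameter bookkeeping: one must verify that a single $s$ meets both Assumption \ref{ass2.7} (requiring $\theta < s$) and Assumption \ref{ass2.8} (requiring $s$ strictly below the natural $p_k \wedge q_k$ threshold) in every factor, and similarly for $s_p$ on the target side. The strictness of the inequality $\frac{mn}{n+N+1}\vee\frac{mn}{N-\widetilde N} < p_k \wedge q_k$ is exactly what creates the slack needed for such a simultaneous choice; in the borderline case no admissible $s$ would exist. Beyond this, the argument is a plug-and-play instantiation of the abstract Hardy space result and requires no further kernel estimates.
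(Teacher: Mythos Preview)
Your proposal is correct and follows essentially the same approach as the paper: Theorem \ref{th514} is obtained by specializing Theorem \ref{s1th2} to Lorentz spaces, with all of the required hypotheses (ball quasi-Banach structure, Assumptions \ref{ass2.7} and \ref{ass2.8}, and the H\"older inequality of Proposition \ref{mixlorentz}) already recorded in Subsection \ref{s5s5}. The paper gives no argument beyond this direct instantiation, so your parameter bookkeeping is precisely what is needed; the only cosmetic difference is that the paper writes the specialization as $X=L^{1,q/p}$, $X_k=L^{1,q_k/p_k}$ so that the convexifications $X^p$, $X_k^{p_k}$ recover $L^{p,q}$, $L^{p_k,q_k}$, which is equivalent to your choice.
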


\begin{theorem}\label{th515}
Let all the notation be as in Theorem \ref{th513}
and let $T$ be a multilinear Calder\'on--Zygmund operator associated to the kernel $K$ that satisfies (\ref{s1e1}).
If $\frac{mn}{n+N+1}<p\wedge q<\infty$, then $T_\ast$ extends to a bounded operator from
$H^{p_1,q_1}\times \cdots \times H^{p_m,q_m}$ into $L^{p,q}$.
\end{theorem}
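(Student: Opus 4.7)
The plan is to obtain Theorem \ref{th515} as a direct application of the abstract result Theorem \ref{s1th3} to the Lorentz-space setting, paralleling the reasoning already used to derive Theorem \ref{th513} from Theorem \ref{s1th1}. The substantive difference from Theorem \ref{th513} is only that the underlying operator is the maximal truncation $T_{\ast}$ in place of $T$, and Theorem \ref{s1th3} already encapsulates this passage at the level of ball quasi-Banach function spaces; hence no additional analysis of the maximal truncation kernels $K_{\delta}$ is needed at this stage.

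First I would set the ball quasi-Banach function spaces as $X = L^{p,q}(\mathbb{R}^{n})$ and $X_{k} = L^{p_{k},q_{k}}(\mathbb{R}^{n})$ for $k = 1,\dots,m$, using the same identifications recorded in Subsection \ref{s5s5} (so that the associated Hardy spaces are the Hardy--Lorentz spaces $H^{p_{k},q_{k}}$). Next I would verify the three hypotheses demanded by Theorem \ref{s1th3}: Assumption \ref{ass2.7}, Assumption \ref{ass2.8}, and the H\"older-type inequality (\ref{holder}). The first two are precisely what is catalogued in the preamble of Subsection \ref{s5s5}: Assumption \ref{ass2.7} holds for $s\in(0,1]$ and $\theta\in(0,\min\{s,p,q\})$, while Assumption \ref{ass2.8} holds on $L^{p,r}$ for $s\in(0,\min\{p,r\})$ and $q\in(\max\{1,p,r\},\infty]$. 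The H\"older inequality for the product of Lorentz-space norms is exactly Proposition \ref{mixlorentz}.

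The only remaining check concerns the parameter $\theta$. The hypothesis $\frac{mn}{n+N+1} < p\wedge q$, combined with the elementary inequality $p_{k}\wedge q_{k}\ge p\wedge q$ coming from $\frac{1}{p}=\sum\frac{1}{p_{k}}$ and $\frac{1}{q}=\sum\frac{1}{q_{k}}$, guarantees that one may select $\theta\in[\frac{mn}{n+N+1},1)$ together with some $s\in(\theta,1)$ for which the Lorentz-space version of Assumption \ref{ass2.7} is valid simultaneously on every $X_{k}$ and on $X$. With these selections recorded (and the auxiliary $\theta_{p},s_{p}$ for $X^{p}$ chosen analogously), Theorem \ref{s1th3} applies verbatim and yields the boundedness $T_{\ast}\colon H^{p_{1},q_{1}}\times\cdots\times H^{p_{m},q_{m}}\to L^{p,q}$.

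The main obstacle, such as it is, lies entirely in parameter bookkeeping: one must pick $\theta,s$ consistently with the Lorentz-space admissible ranges, with the lower bound $\frac{mn}{n+N+1}$, and with the strict ordering $\theta<s<1$ imposed by Assumption \ref{ass2.7}. Because the H\"older inequality and the vector-valued maximal inequalities on Lorentz spaces have already been established, and because the analytic content concerning $T_{\ast}$ (in particular the pointwise bounds on $|T_{\ast}(a_{1,j_{1}},\dots,a_{m,j_{m}})|$ on $R_{j_{1},\dots,j_{m}}$ and on its complement, borrowed from \cite{Tan1,WWX}) is already absorbed inside the proof of Theorem \ref{s1th3}, no further computation is required; the proof reduces to invoking Theorem \ref{s1th3} with the parameter choices described above.
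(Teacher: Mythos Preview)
Your proposal is correct and matches the paper's approach: Theorem \ref{th515} is not given a separate proof in the paper but is obtained exactly as you describe, by specializing Theorem \ref{s1th3} to the Lorentz setting with $X^{p}=L^{p,q}$ and $X_{k}^{p_{k}}=L^{p_{k},q_{k}}$, after checking Assumptions \ref{ass2.7}, \ref{ass2.8} and the H\"older inequality (Proposition \ref{mixlorentz}). Your observation that $p_{k}\wedge q_{k}\ge p\wedge q$ is precisely what converts the hypothesis $\frac{mn}{n+N+1}<p\wedge q$ into the admissible range for $\theta$ on every factor, so the parameter bookkeeping you outline is sufficient.
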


We denote by $h^{p,q}(\mathbb R^{n})$ the local Hardy--Morrey space as in Remark \ref{local} with $X$ replaced by $L^{p,q}(\mathbb R^{n})$. Then we have the following boundedness result.

\begin{theorem}\label{th513-1} 
Let $p,q, p_k, q_k$ be the same as in Proposition \ref{mixlorentz},
where $k=1,2,\ldots,m$.  Let $T_\sigma$ be a multilinear pseudo-differential operator with the symbol $\sigma\in MB_{1,0}^0$.
If $0<{p_k\wedge q_k}<\infty$, then $T_\sigma$ extends to a bounded operator from
$h^{p_1,q_1}\times \cdots \times h^{p_m,q_m}$ into $L^{p,q}$.
\end{theorem}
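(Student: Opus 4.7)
The plan is to invoke Theorem \ref{s1th4} with the specializations $X_k^{p_k}:=L^{p_k,q_k}(\mathbb R^n)$ and $X^p:=L^{p,q}(\mathbb R^n)$. Three ingredients must be checked: (a) each of these Lorentz spaces is a ball quasi-Banach function space satisfying Assumption \ref{ass2.7} and Assumption \ref{ass2.8}; (b) the H\"older inequality (\ref{holder}) holds for this configuration of spaces; and (c) a pair $(\theta,s)$ satisfying $\theta\in[mn/(n+N+1),1)$ and $0<\theta<s<1$ can be chosen uniformly across all factors and the target, for a suitable atomic smoothness order $N$.

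First I would reuse the facts already collected at the opening of Subsection \ref{s5s5}: $L^{p_k,q_k}(\mathbb R^n)$ with $p_k,q_k\in(0,\infty)$ is a ball quasi-Banach function space; Assumption \ref{ass2.7} holds for any $s\in(0,1]$ paired with $\theta\in(0,\min\{s,p_k,q_k\})$ by \cite[Theorem 2.3(iii)]{CRS}; and Assumption \ref{ass2.8} holds for $s\in(0,\min\{p_k,q_k\})$ together with a large enough $q>\max\{1,p_k,q_k\}$ by \cite[Theorem 1.4.16]{G} combined with \cite[Remark 2.7(d)]{WYY}. The identical statements apply to the target space $L^{p,q}(\mathbb R^n)$. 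Because the hypothesis $0<p_k\wedge q_k<\infty$ for every $k$ forces $\min_{1\le k\le m}\{p_k,q_k,p,q\}>0$, I can fix, once and for all, a common pair $(\theta,s)$ with $0<\theta<s$ lying strictly below this minimum; this single choice simultaneously validates Assumptions \ref{ass2.7} and \ref{ass2.8} for all the $X_k^{p_k}$ and for $X^p$.

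For (b), Proposition \ref{mixlorentz} gives exactly (\ref{holder}) under the prescribed scaling relations
\[
\frac{1}{p}=\sum_{k=1}^m\frac{1}{p_k},\qquad \frac{1}{q}=\sum_{k=1}^m\frac{1}{q_k}.
\]
For (c), the atomic smoothness parameter $N$ in the decomposition of an $h_{X_k^{p_k}}$-function is at our disposal; enlarging $N$ forces $mn/(n+N+1)\le\theta$, so the hypothesis of Theorem \ref{s1th4} (inherited from Theorem \ref{s1th1}) is in force. Theorem \ref{s1th4} then yields the claimed boundedness
\[
T_\sigma: h^{p_1,q_1}\times\cdots\times h^{p_m,q_m}\to L^{p,q}.
\]
The entire proof is thus a direct verification of hypotheses and an invocation of Theorem \ref{s1th4}; the only step needing even minor attention is the uniform selection of $(\theta,s)$ across all $m+1$ Lorentz spaces, which is rendered unproblematic by the strict positivity of $\min_k\{p_k,q_k\}$.
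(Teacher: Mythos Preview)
Your proposal is correct and matches the paper's approach: the paper simply records that Theorem~\ref{s1th4} applies once the Lorentz spaces are shown to satisfy Assumptions~\ref{ass2.7} and~\ref{ass2.8} and the H\"older inequality of Proposition~\ref{mixlorentz}, exactly the three verifications you carry out. Your observation that the atomic order $N$ is free (so the constraint $\theta\ge mn/(n+N+1)$ can always be met) is the reason the application theorems for $T_\sigma$ impose no lower bound on the exponents, and is implicit in the paper's treatment.
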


\subsection{(Local) Hardy--Orlicz spaces}\label{s5s6}
A function $\Phi\colon[0,\infty)\to[0,\infty)$ is called an \emph{Orlicz function} if it is non-decreasing and satisfies $\Phi(0)=0$, $\Phi(t)>0$ whenever $t\in(0,\infty)$, and $\lim_{t\to\infty}\Phi(t)=\infty$.
Let $p\in[0,\infty)$ and $\varphi\colon\mathbb R^{n}\times[0,\infty)\to[0,\infty)$ be a function such that, for almost every $x\in\mathbb R^{n}$, $\varphi(x,\cdot)$ is an Orlicz function. The function $\varphi$ is said to be of \emph{uniformly upper} (resp., \emph{lower}) \emph{type} $p\in[0,\infty)$ if there exists a positive constant $C$ such that, for any $x\in\mathbb R^{n}$, $t\in[0,\infty)$ and $s\in[1,\infty)$ (resp.,$s\in[0,1]$), $\varphi(x,st)\leq Cs^{p}\varphi(x,t)$.
The function $\varphi$ is said to satisfy the \emph{uniform Muckenhoupt condition} for some $q\in[1,\infty)$, denoted by $\varphi\in \mathbb A_{q}(\mathbb R^{n})$, if, when $q\in(1,\infty)$,
\[
[\varphi]_{\mathbb A_{q}}:=\sup_{t\in(0,\infty)}\sup_{B\subset\mathbb R^{n}}\frac{1}{\vert B\vert^{q}}\int_{B}\varphi(x,t)dx\left\{ \int_{B}[\varphi(y,t)]^{-\frac{q^{\prime}}{q}}dy \right\}^{\frac{q}{q^{\prime}}}<\infty,
\]
where $1/q+1/q^{\prime}=1$, or
\[
[\varphi]_{\mathbb A_{1}}:=\sup_{t\in(0,\infty)}\sup_{B\subset\mathbb R^{n}}\frac{1}{\vert B\vert}\int_{B}\varphi(x,t)dx\left( \operatorname*{ess\,sup}\limits_{y\in\mathbb B}[\varphi(y,t)]^{-1} \right)<\infty.
\]
The class $\mathbb A_{\infty}(\mathbb R^{n})$ is defined by setting
\[
\mathbb A_{\infty}(\mathbb R^{n}):=\bigcup_{q\in[1,\infty)}\mathbb A_{q}(\mathbb R^{n}).
\]
For any given $\varphi\in\mathbb A_{\infty}(\mathbb R^{n})$, the \emph{critical weight index} $q(\varphi)$ is defined by setting
\[
q(\varphi):=\inf\{q\in[1,\infty)\colon\varphi\in\mathbb A_{q}(\mathbb R^{n})\}.
\]
Then the function $\varphi\colon\mathbb R^{n}\times[0,\infty)\to[0,\infty)$ is called a \emph{growth function} if the following hold true:
\begin{enumerate}
    \item $\varphi$ is a \emph{Musielak--Orlicz function}, namely,\\
    $\varphi(x,\cdot)$ is an Orlicz function for almost every given $x\in\mathbb R^{n}$;\\
    $\varphi(\cdot,t)$ is a measurable function for any given $t\in[0,\infty)$.
    \item $\varphi\in\mathbb A_{\infty}(\mathbb R^{n})$.
    \item The function $\varphi$ is of uniformly lower type $p$ for some $p\in(0,1]$ and of uniformly upper type 1.
\end{enumerate}
\begin{definition}
    For a growth function $\varphi$, a measurable function $f$ on $\mathbb R^{n}$ is said to be in the \emph{Musielak--Orlicz space} $L^{\varphi}(\mathbb R^{n})$ if $\int_{\mathbb R^{n}}\varphi(x,\vert f(x)\vert)dx<\infty$, equipped with the (quasi-)norm
    \[
    \|f\|_{L^{\varphi}}:=\inf\left\{  \lambda\in(0,\infty)\colon\int_{\mathbb R^{n}}\varphi\left( x,\frac{\vert f(x)\vert}{\lambda} \right)dx\leq1\right\}.
    \]
\end{definition}
    
It is shown in \cite[Subsection 7.7]{SHYY} that $L^{\varphi}(\mathbb R^{n})$ is a ball quasi-Banach function space.    
Let $X:=L^{\varphi}(\mathbb R^{n})$ with uniformly lower type $p_{\varphi}^{-}$ and uniformly upper type $p_{\varphi}^{+}$. From \cite[Theorem 7.14(i)]{SHYY}, Assumption~\ref{ass2.7} holds true when $s\in(0,1]$ and $\theta\in(0,\min\{s,p^{-}_{\varphi}/q(\varphi)\})$. Besides,
by \cite[Theorem 7.12]{SHYY} and \cite[Remark 2.7(g)]{WYY},
Assumption~\ref{ass2.8} holds true when $s\in(0,p_\varphi^-)$ and $q\in(1,\infty]$. 
Moreover, we also recall the following H\"{o}lder inequality on Orlicz spaces in \cite[Theorem 2.12]{Wa}.

\begin{proposition}\label{mixorlicz}
Let $L^{\varphi_k}(\mathbb R^{n})$ with uniformly lower type $p_{\varphi_k}^{-}$ and uniformly upper type $p_{\varphi_k}^{+}$,
where $k=1,\dots, m$. Define $\varphi=(\prod_{k=1}^m\varphi_k^{-1})^{-1}$. 
Then for all $f_k\in L^{\varphi_k}(\mathbb R^{n})$, we have
$$\left\|\prod_{k=1}^mf_k\right\|_{L^{\varphi}}\lesssim \prod_{k=1}^m\|f_k\|_{L^{\varphi_k}}.$$
\end{proposition}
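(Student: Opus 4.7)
The plan is to prove this generalized H\"older's inequality by combining a pointwise Young-type estimate with the uniformly lower type property of the Musielak--Orlicz function $\varphi$. By homogeneity of the Luxemburg (quasi-)norm, it suffices to assume that $\|f_{k}\|_{L^{\varphi_{k}}}\leq 1$ for every $k$, which gives $\int_{\mathbb R^{n}}\varphi_{k}(x,|f_{k}(x)|)\,dx\leq 1$ for each $k=1,\dots,m$, and then estimate $\|\prod_{k}f_{k}\|_{L^{\varphi}}$ from above by a constant depending only on $m$ and on the type parameters.

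The heart of the argument is the pointwise bound: for any $x\in\mathbb R^{n}$ and any nonnegative $u_{1},\dots,u_{m}$,
\[
\varphi\!\left(x,\prod_{k=1}^{m}u_{k}\right)\leq \sum_{k=1}^{m}\varphi_{k}(x,u_{k}).
\]
To derive it, set $t_{k}:=\varphi_{k}(x,u_{k})$, so that $u_{k}=\varphi_{k}^{-1}(x,t_{k})$. Since each $\varphi_{k}^{-1}(x,\cdot)$ is nondecreasing and $\varphi^{-1}(x,\cdot)=\prod_{k}\varphi_{k}^{-1}(x,\cdot)$ by hypothesis,
\[
\prod_{k=1}^{m}u_{k}=\prod_{k=1}^{m}\varphi_{k}^{-1}(x,t_{k})\leq \prod_{k=1}^{m}\varphi_{k}^{-1}\!\left(x,\sum_{j=1}^{m}t_{j}\right)=\varphi^{-1}\!\left(x,\sum_{j=1}^{m}t_{j}\right),
\]
and applying $\varphi(x,\cdot)$ to both sides gives the asserted pointwise inequality.

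Substituting $u_{k}=|f_{k}(x)|$ and integrating yields
\[
\int_{\mathbb R^{n}}\varphi\!\left(x,\prod_{k=1}^{m}|f_{k}(x)|\right)dx\leq \sum_{k=1}^{m}\int_{\mathbb R^{n}}\varphi_{k}(x,|f_{k}(x)|)\,dx\leq m.
\]
To reach modular value $\leq 1$, I would invoke the uniformly lower type $p_{\varphi}^{-}\in(0,1]$ of $\varphi$: for any $\lambda\geq 1$,
\[
\int_{\mathbb R^{n}}\varphi\!\left(x,\lambda^{-1}\prod_{k=1}^{m}|f_{k}(x)|\right)dx\leq C\lambda^{-p_{\varphi}^{-}}\int_{\mathbb R^{n}}\varphi\!\left(x,\prod_{k=1}^{m}|f_{k}(x)|\right)dx\leq Cm\,\lambda^{-p_{\varphi}^{-}}.
\]
Choosing $\lambda:=(Cm)^{1/p_{\varphi}^{-}}$ and appealing to the definition of the Luxemburg (quasi-)norm forces $\|\prod_{k}f_{k}\|_{L^{\varphi}}\leq \lambda$, which after undoing the normalization is exactly the claimed inequality with implicit constant depending only on $m$ and $p_{\varphi}^{-}$.

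The main obstacle is the auxiliary verification that $\varphi:=(\prod_{k}\varphi_{k}^{-1})^{-1}$ is itself a Musielak--Orlicz growth function, and in particular that it has a strictly positive uniform lower type that can be controlled by the $p_{\varphi_{k}}^{-}$'s. This is essentially bookkeeping with inverse functions: if each $\varphi_{k}(x,\cdot)$ is of uniform upper type $p_{\varphi_{k}}^{+}\leq 1$, then each $\varphi_{k}^{-1}(x,\cdot)$ satisfies a lower-type-like bound, and the product inherits one; inverting produces the required uniform lower type of $\varphi$. Once this calibration is in place, the pointwise-modular-scaling chain above closes the proof cleanly, and matches the formulation stated in \cite{Wa}.
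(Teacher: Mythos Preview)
The paper does not prove this proposition itself; it merely recalls it from \cite[Theorem~2.12]{Wa}. Your argument supplies exactly the standard proof one expects: the pointwise Young-type inequality
\[
\varphi\Bigl(x,\prod_{k=1}^{m}u_{k}\Bigr)\leq\sum_{k=1}^{m}\varphi_{k}(x,u_{k}),
\]
obtained directly from $\varphi^{-1}=\prod_{k}\varphi_{k}^{-1}$ and monotonicity of the inverses, integrates to a modular bound of size $m$, and the uniformly lower type of $\varphi$ then converts this to the desired norm inequality via scaling. This is correct and is the approach found in the cited reference.

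One small slip in your final paragraph: the positivity of the lower type $p_{\varphi}^{-}$ of $\varphi$ is inherited from the \emph{lower} types $p_{\varphi_{k}}^{-}$ of the $\varphi_{k}$, not from the upper types as you wrote. The chain runs: lower type of $\varphi_{k}$ $\Rightarrow$ upper-type-like bound for $\varphi_{k}^{-1}$ $\Rightarrow$ upper-type-like bound for the product $\varphi^{-1}$ $\Rightarrow$ lower type of $\varphi$, yielding $p_{\varphi}^{-}=\bigl(\sum_{k}1/p_{\varphi_{k}}^{-}\bigr)^{-1}>0$. This is precisely what you need for the scaling step, so the correction is cosmetic and the proof stands.
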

 
Therefore, Theorems~\ref{s1th1}, \ref{s1th2}, \ref{s1th3} and \ref{s1th4} hold true when $X= L^{\varphi}(\mathbb R^{n})$, 
$X_1={L^{\varphi_1}}(\mathbb R^{n})$, $\dots,$
$X_m={L^{\varphi_m}}(\mathbb R^{n})$ and $p=p_1=\cdots=p_m=1$.
Denote $H^{\varphi}(\mathbb R^{n})$ the Hardy--Orlicz space as in Definition \ref{hx} with $X$ replaced by $L^{\varphi}(\mathbb R^{n})$. 
For more information on the Hardy--Orlicz space, see \cite{CFYY,LFFY,SHYY,WYY,YLK}.

\begin{theorem}\label{th516} 
Let $L^{\varphi}(\mathbb R^{n}),  L^{\varphi_k}(\mathbb R^{n})$ be the same as in Proposition \ref{mixorlicz},
where $k=1,2,\ldots,m$. Let $T$ be a multilinear Calder\'on--Zygmund operator associated to the kernel $K$ that satisfies (\ref{s1e1}) for all \( |\alpha| \leq N \).
If $\frac{mnq(\varphi_k)}{n+N+1}<{p^{-}_{\varphi_k}}<\infty$, then $T$ extends to a bounded operator from
$H^{\varphi_1}\times \cdots \times H^{\varphi_m}$ into $L^{\varphi}$.
\end{theorem}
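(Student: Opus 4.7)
The plan is to derive Theorem \ref{th516} as a direct application of the general Theorem \ref{s1th1} to the concrete ball quasi-Banach function spaces $\{L^{\varphi_k}\}_{k=1}^m$ and $L^{\varphi}$, so the task reduces to verifying the three structural hypotheses used in Theorem \ref{s1th1}: Assumption \ref{ass2.7}, Assumption \ref{ass2.8}, and the H\"older-type inequality (\ref{holder}), with $p = p_1 = \cdots = p_m = 1$ (so $X^p = L^{\varphi}$ and $X_k^{p_k} = L^{\varphi_k}$).

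First I would check Assumption \ref{ass2.7} for each $L^{\varphi_k}$. By \cite[Theorem 7.14(i)]{SHYY} this holds for any $s \in (0,1]$ and $\theta \in (0, \min\{s, p^-_{\varphi_k}/q(\varphi_k)\})$. The quantitative hypothesis $\frac{mn\,q(\varphi_k)}{n+N+1} < p^-_{\varphi_k}$ is exactly the condition that lets me take $\theta = \frac{mn}{n+N+1}$ (or any value slightly above it, still $<1$) strictly below $p^-_{\varphi_k}/q(\varphi_k)$ simultaneously for every $k$; after choosing such a $\theta$, I pick $s \in (\theta,1)$ close enough to $1$ that $s < p^-_{\varphi_k}$ for each $k$. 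With the same $s$, Assumption \ref{ass2.8} then follows from \cite[Theorem 7.12]{SHYY} combined with \cite[Remark 2.7(g)]{WYY}, because $s \in (0, p^-_{\varphi_k})$ and one may take $q > 1$ arbitrary. The target space $L^{\varphi}$ is a ball quasi-Banach function space by \cite[Subsection 7.7]{SHYY}; the same two references produce parameters $0 < \theta_p < s_p < 1$ verifying Assumptions \ref{ass2.7} and \ref{ass2.8} for $L^{\varphi}$, and per the remark following Theorem \ref{s1th1} no sharp lower bound on $\theta_p$ is needed here.

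Next I install the H\"older-type inequality: Proposition \ref{mixorlicz} gives
\[
\left\|\prod_{k=1}^m f_k\right\|_{L^{\varphi}} \lesssim \prod_{k=1}^m \|f_k\|_{L^{\varphi_k}},
\]
which is exactly (\ref{holder}) under the identifications above. All hypotheses of Theorem \ref{s1th1} being satisfied for the chosen $\theta$ and $s$, the conclusion $T\colon H^{\varphi_1} \times \cdots \times H^{\varphi_m} \to L^{\varphi}$ follows at once.

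The only real subtlety, and hence the main (modest) obstacle, is the joint parameter selection: one must choose a single pair $(\theta, s)$ with $\frac{mn}{n+N+1} \le \theta < s < 1$ such that $\theta < p^-_{\varphi_k}/q(\varphi_k)$ and $s < p^-_{\varphi_k}$ simultaneously for every $k=1,\dots,m$. This is precisely what the hypothesis $\frac{mn\,q(\varphi_k)}{n+N+1} < p^-_{\varphi_k} < \infty$, taken over all $k$, guarantees, and once this bookkeeping is done the conclusion is immediate from Theorem \ref{s1th1}.
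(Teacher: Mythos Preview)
Your proposal is correct and matches the paper's approach exactly: the paper derives Theorem \ref{th516} by verifying that $L^{\varphi_k}$ and $L^{\varphi}$ satisfy Assumptions \ref{ass2.7} and \ref{ass2.8} via \cite[Theorem 7.14(i), Theorem 7.12]{SHYY} and \cite[Remark 2.7(g)]{WYY}, together with the H\"older inequality from Proposition \ref{mixorlicz}, and then invokes Theorem \ref{s1th1} with $p=p_1=\cdots=p_m=1$. One cosmetic remark: since a growth function has uniformly lower type in $(0,1]$, the phrase ``pick $s\in(\theta,1)$ close enough to $1$'' should really read ``pick $s\in(\theta,\min_k p^-_{\varphi_k})$,'' but the interval is nonempty precisely because $\theta<p^-_{\varphi_k}/q(\varphi_k)\le p^-_{\varphi_k}$, so your bookkeeping goes through.
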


\begin{theorem}\label{th517} Let all notation be as in Theorem \ref{th516}.
Let $T$ be a multilinear Calder\'on--Zygmund operator associated to the kernel $K$ that satisfies (\ref{s1e1}) for all \( |\alpha| \leq N \).
Assume further that
\begin{align}\label{s1c4}
\int_{(\mathbb R^n)^m}x^\alpha T(a_1,a_2,\cdots,a_m)(x)dx=0,
\end{align}
for all $|\alpha|\le \widetilde N$ and $(L^{\varphi_k},q,N)$-atoms $a_k$,
$k=1,2,\cdots,m$.
If $\frac{mnq(\varphi_k)}{n+N+1}\vee\frac{mnq(\varphi_k)}{N-\widetilde N}<{p^{-}_{\varphi_k}}<\infty$
and $\frac{nq(\varphi)}{n+\widetilde N+1}<p^{-}_\varphi<\infty$, then $T$ extends to a bounded operator from
$H^{\varphi_1}\times \cdots \times H^{\varphi_m}$ into $H^{\varphi}$.
\end{theorem}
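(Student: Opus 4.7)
The plan is to deduce the statement directly from Theorem \ref{s1th2} by carefully matching the three hypotheses of that theorem to what is already known about the Musielak--Orlicz spaces $L^{\varphi_1},\ldots,L^{\varphi_m},L^{\varphi}$. Concretely, I need to (i) verify Assumption \ref{ass2.7} for each $L^{\varphi_k}$ with parameters $(\theta,s)$ that lie in the admissible range $[\frac{mn}{n+N+1}\vee\frac{mn}{N-\widetilde N},1)\times(\theta,1)$ appearing in the hypothesis of Theorem \ref{s1th2}; (ii) verify Assumption \ref{ass2.7} for $L^{\varphi}$ with $(\theta_p,s_p)$ in the admissible range $[\frac{n}{n+\widetilde N+1},1)\times(\theta_p,1)$; and (iii) record that Assumption \ref{ass2.8}, the H\"older inequality (\ref{holder}), and the vanishing moment condition (\ref{s1c3}) are already available.

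For (i), recall from Subsection \ref{s5s6} that Assumption \ref{ass2.7} is valid for $X=L^{\varphi_k}$ whenever $\theta\in(0,\min\{s,p^{-}_{\varphi_k}/q(\varphi_k)\})$ and $s\in(0,1]$. The hypothesis
\[
\frac{mnq(\varphi_k)}{n+N+1}\vee\frac{mnq(\varphi_k)}{N-\widetilde N}<p^{-}_{\varphi_k}
\]
is precisely what guarantees that
\[
\frac{mn}{n+N+1}\vee\frac{mn}{N-\widetilde N}<\min_{1\le k\le m}\frac{p^{-}_{\varphi_k}}{q(\varphi_k)},
\]
so that one can pick a common $\theta$ and then $s\in(\theta,1)$ valid for every $L^{\varphi_k}$ simultaneously. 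For (ii), the hypothesis $\frac{nq(\varphi)}{n+\widetilde N+1}<p^{-}_{\varphi}$ rewritten as $\frac{n}{n+\widetilde N+1}<\frac{p^{-}_{\varphi}}{q(\varphi)}$ lets one choose $\theta_p$ and then $s_p\in(\theta_p,1)$ within the admissible range for $L^{\varphi}$, again using the statement recorded in Subsection \ref{s5s6}.

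For (iii), Assumption \ref{ass2.8} for each $L^{\varphi_k}$ and for $L^{\varphi}$ is already recorded in Subsection \ref{s5s6}; the required H\"older inequality
\[
\Bigl\|\prod_{k=1}^{m}f_k\Bigr\|_{L^{\varphi}}\lesssim\prod_{k=1}^{m}\|f_k\|_{L^{\varphi_k}}
\]
is Proposition \ref{mixorlicz}, given our definition $\varphi=(\prod_{k=1}^{m}\varphi_k^{-1})^{-1}$; finally, the vanishing-moment condition on $T$ applied to atoms is imposed as the hypothesis (\ref{s1c4}). All conditions of Theorem \ref{s1th2} are therefore met with $X^{p_k}_k=L^{\varphi_k}$ and $X^p=L^{\varphi}$, and the boundedness $T:H^{\varphi_1}\times\cdots\times H^{\varphi_m}\to H^{\varphi}$ follows at once.

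The only non-routine point is to confirm that the single choice of $\theta$ can be made to work simultaneously for all $m$ spaces $L^{\varphi_k}$; this is the main (and only mild) obstacle, but it is handled by the uniform strict inequality displayed above. Beyond this, the argument is a verification that the six worked Orlicz hypotheses of Subsection \ref{s5s6} plug directly into the hypotheses of Theorem \ref{s1th2}.
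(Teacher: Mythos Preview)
Your proposal is correct and follows exactly the approach of the paper: Theorem \ref{th517} is stated in Subsection \ref{s5s6} as a direct consequence of Theorem \ref{s1th2} once Assumptions \ref{ass2.7} and \ref{ass2.8} and the H\"older inequality (Proposition \ref{mixorlicz}) are verified for the Musielak--Orlicz spaces, which is precisely the verification you carry out. The only minor omission is that, when choosing $s\in(\theta,1)$, you should also record $s<\min_k p^{-}_{\varphi_k}$ so that Assumption \ref{ass2.8} holds with the same $s$; this is automatic since $\theta<p^{-}_{\varphi_k}/q(\varphi_k)\le p^{-}_{\varphi_k}$ leaves room for such an $s$.
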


\begin{theorem}\label{th518}
Let all the notation be as in Theorem \ref{th516}
and let $T$ be a multilinear Calder\'on--Zygmund operator associated to the kernel $K$ that satisfies (\ref{s1e1}).
If $\frac{mnq(\varphi_k)}{n+N+1}<{p^{-}_{\varphi_k}}<\infty$, then $T_\ast$ extends to a bounded operator from
$H^{\varphi_1}\times \cdots \times H^{\varphi_m}$ into $L^{\varphi}$.
\end{theorem}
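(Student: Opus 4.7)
The plan is to obtain Theorem \ref{th518} as a direct corollary of the abstract maximal-operator result Theorem \ref{s1th3} specialized to the Musielak--Orlicz setting. Concretely, I would set $X_k^{p_k} := L^{\varphi_k}(\mathbb R^n)$ for $k=1,\dots,m$ and $X^p := L^\varphi(\mathbb R^n)$ with $\varphi := (\prod_{k=1}^m \varphi_k^{-1})^{-1}$, so that the Hardy spaces appearing in the conclusion are exactly the Hardy--Orlicz spaces $H^{\varphi_k}$ and the target is $L^\varphi$. The proof is then reduced to verifying that every hypothesis of Theorem \ref{s1th3} is satisfied in this setting.

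First I would check the structural hypotheses: $L^{\varphi_k}$ and $L^\varphi$ are ball quasi-Banach function spaces by \cite[Subsection 7.7]{SHYY}; they satisfy Assumption \ref{ass2.7} by \cite[Theorem 7.14(i)]{SHYY} for every $s\in(0,1]$ and $\theta \in (0, \min\{s, p^{-}_{\varphi_k}/q(\varphi_k)\})$; and they satisfy Assumption \ref{ass2.8} by \cite[Theorem 7.12]{SHYY} together with \cite[Remark 2.7(g)]{WYY} for $s \in (0, p^{-}_{\varphi_k})$ and appropriate $q$. The Hölder-type inequality (\ref{holder}) required by Theorem \ref{s1th3} is precisely Proposition \ref{mixorlicz}. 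These three ingredients were already used to deduce Theorem \ref{th516} from Theorem \ref{s1th1}, so they transfer without change.

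The only numerical step left is to match the admissibility range $\theta \in [\frac{mn}{n+N+1}, 1)$ of Theorem \ref{s1th3} with the hypothesis $\frac{mn\, q(\varphi_k)}{n+N+1} < p^{-}_{\varphi_k}$. This hypothesis rearranges to $\frac{mn}{n+N+1} < p^{-}_{\varphi_k}/q(\varphi_k)$, which is exactly what is needed to choose $\theta$ and $s$ satisfying $\frac{mn}{n+N+1} \le \theta < s < 1$ together with $\theta < p^{-}_{\varphi_k}/q(\varphi_k)$; thus the Orlicz-range condition on $\theta$ in Assumption \ref{ass2.7} and the kernel-range condition in Theorem \ref{s1th3} are simultaneously satisfied, and an analogous choice can be made for the target space $L^\varphi$.

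With these verifications in place, Theorem \ref{s1th3} applies directly and yields the boundedness $T_\ast\colon H^{\varphi_1}\times\cdots\times H^{\varphi_m}\to L^\varphi$ claimed in Theorem \ref{th518}. The main potential obstacle is the bookkeeping needed to make the lower/upper type exponents $p^{-}_{\varphi_k}, p^{+}_{\varphi_k}$, the critical indices $q(\varphi_k)$, and the induced growth function $\varphi$ jointly compatible in Assumptions \ref{ass2.7} and \ref{ass2.8}; however, since this matching was already carried out for the non-maximal version Theorem \ref{th516} in the same subsection, no genuinely new difficulty arises, and the proof of Theorem \ref{th518} reduces to invoking Theorem \ref{s1th3} after the above identifications.
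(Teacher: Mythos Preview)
Your proposal is correct and follows essentially the same approach as the paper: the paper derives Theorem \ref{th518} by verifying that the Musielak--Orlicz spaces $L^{\varphi_k}$ and $L^{\varphi}$ satisfy Assumptions \ref{ass2.7} and \ref{ass2.8} (via \cite[Subsection 7.7, Theorems 7.12 and 7.14(i)]{SHYY} and \cite[Remark 2.7(g)]{WYY}) and the H\"older inequality (Proposition \ref{mixorlicz}), then invokes the abstract Theorem \ref{s1th3} with $X_k=L^{\varphi_k}$, $X=L^{\varphi}$ and $p=p_1=\cdots=p_m=1$. Your matching of the numerical condition $\frac{mn\,q(\varphi_k)}{n+N+1}<p^{-}_{\varphi_k}$ with the range $\theta\in[\frac{mn}{n+N+1},1)$ is exactly the verification implicit in the paper.
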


Denote $h^{\varphi}(\mathbb R^{n})$ the local Hardy--Orlicz space as in Remark \ref{local} with $X$ replaced by $L^{\varphi}(\mathbb R^{n})$. Then we get the boundedness result as follows.

\begin{theorem}\label{th516-1} 
Let $L^{\varphi}(\mathbb R^{n}),  L^{\varphi_k}(\mathbb R^{n})$ be the same as in Proposition \ref{mixorlicz},
where $k=1,2,\ldots,m$. Let $T_\sigma$ be a multilinear pseudo-differential operator with the symbol $\sigma\in MB_{1,0}^0$.
If $0<{p^{-}_{\varphi_k}}<\infty$, then $T_\sigma$ extends to a bounded operator from
$h^{\varphi_1}\times \cdots \times h^{\varphi_m}$ into $L^{\varphi}$.
\end{theorem}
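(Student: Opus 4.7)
The plan is to derive Theorem \ref{th516-1} as a direct application of Theorem \ref{s1th4} to the concrete setting $X := L^{\varphi}(\mathbb R^n)$ and $X_k := L^{\varphi_k}(\mathbb R^n)$ with $p = p_1 = \cdots = p_m = 1$, so that the $p_k$-convexifications $X_k^{p_k}$ collapse to $L^{\varphi_k}$ itself and similarly $X^p = L^{\varphi}$. The local Hardy spaces $h^{\varphi_k}$ and $h_{L^{\varphi_k}}$ therefore coincide via Remark \ref{local}. With these identifications fixed, the task reduces to verifying the list of structural hypotheses that Theorem \ref{s1th4} imposes on the underlying ball quasi-Banach function spaces.

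First, I would invoke the facts recalled at the start of Subsection \ref{s5s6}: each $L^{\varphi_k}$ (and $L^{\varphi}$) is a ball quasi-Banach function space by \cite[Subsection 7.7]{SHYY}, and by \cite[Theorem 7.14(i)]{SHYY} together with \cite[Theorem 7.12]{SHYY} and \cite[Remark 2.7(g)]{WYY}, Assumption \ref{ass2.7} holds provided one picks $s_k \in (0,1]$ and $\theta_k \in (0, \min\{s_k, p^-_{\varphi_k}/q(\varphi_k)\})$, while Assumption \ref{ass2.8} holds for any $s_k \in (0, p^-_{\varphi_k})$ with $q$ large enough. Since $p^-_{\varphi_k}/q(\varphi_k) > 0$ for every $k$, one can choose common parameters $0 < \theta < s < \min\{1, p^-_{\varphi_1}/q(\varphi_1), \dots, p^-_{\varphi_m}/q(\varphi_m)\}$ so that all $m$ spaces satisfy Assumptions \ref{ass2.7} and \ref{ass2.8} simultaneously with the same $(\theta, s)$; analogously I would select $(\theta_p, s_p)$ for $L^{\varphi}$. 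Next, the H\"older inequality \eqref{holder} is exactly Proposition \ref{mixorlicz}, using the multiplicative structure $\varphi = \bigl(\prod_{k=1}^m \varphi_k^{-1}\bigr)^{-1}$.

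The remaining point to address is the index constraint $\theta \in [\tfrac{mn}{n+N+1}, 1)$ appearing in Theorem \ref{s1th1}, which is referenced by Theorem \ref{s1th4}. Inspection of the proof of Theorem \ref{s1th4} shows that this constraint originates from the off-diagonal estimate in the term $II$, where the exponent $\tfrac{n + (M+1)/|A|}{n}$ coming from \cite[Theorem 1.1]{TZ} can be taken with $M$ arbitrarily large; thus for any preassigned $\theta \in (0,1)$ satisfying the Assumption \ref{ass2.7} range above, the kernel parameter $M$ (playing the role of $N$) may be enlarged so that the required inequality $M > mn(\tfrac{1}{\theta}-1)-1$ holds. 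This is why the statement of Theorem \ref{th516-1} only records the condition $0 < p^-_{\varphi_k} < \infty$ and does not involve $q(\varphi_k)/(n+N+1)$ as in Theorem \ref{th516}.

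Once these hypotheses are checked, Theorem \ref{s1th4} produces the bound
\[
\|T_\sigma(f_1,\dots,f_m)\|_{L^{\varphi}} \lesssim \prod_{k=1}^m \|f_k\|_{h^{\varphi_k}},
\]
which is the desired conclusion. The main obstacle I anticipate is purely bookkeeping: choosing a single pair $(\theta, s)$ compatible with Assumptions \ref{ass2.7} and \ref{ass2.8} across all $m+1$ Musielak--Orlicz spaces simultaneously and then aligning the integer $M$ in the pseudo-differential kernel estimate with this $\theta$. No genuinely new analytic work is required; the proof is essentially a translation of the abstract statement into the Orlicz language together with the H\"older inequality of Proposition \ref{mixorlicz}.
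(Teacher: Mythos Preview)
Your proposal is correct and follows exactly the paper's route: the paper does not give a separate proof of Theorem \ref{th516-1} but simply records it as an instance of Theorem \ref{s1th4} after verifying that $L^{\varphi}$ and $L^{\varphi_k}$ are ball quasi-Banach function spaces satisfying Assumptions \ref{ass2.7} and \ref{ass2.8}, and that Proposition \ref{mixorlicz} supplies the H\"older inequality \eqref{holder}. Your additional observation that the free parameter $M$ in the pseudo-differential estimate absorbs the index constraint (explaining why no $q(\varphi_k)$ appears here, unlike in Theorem \ref{th516}) is correct and in fact spells out a point the paper leaves implicit.
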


\begin{remark}
As mentioned in \cite{HH}, the generalized Orlicz spaces under certain reasonable conditions encompass a wide variety of classical function spaces as special cases, including Lebesgue spaces, weighted Lebesgue spaces, classical Orlicz spaces, variable Lebesgue spaces, and double phase spaces. In this case, the Hardy--Littlewood maximal operator is also bounded on both the generalized Orlicz spaces and their dual spaces. Furthermore, the Fefferman--Stein type vector-valued maximal inequality and the H\"older inequality also hold for the generalized Orlicz spaces. For more details, see \cite{CH, HH, Wa}. Consequently, the boundedness of these multilinear operators can also be established on the generalized (local) Hardy--Orlicz spaces.
 \end{remark}


\section*{Acknowledgments}
The author is supported by the National Natural Science Foundation of China (Grant No.11901309), Natural Science Foundation of Nanjing University of Posts and Telecommunications (Grant No.NY224167) and the Jiangsu Government Scholarship for Overseas Studies.

\bibliographystyle{amsplain}

\end{document}